\newcommand{\beas}{\begin{eqnarray*}}
\newcommand{\enas}{\end{eqnarray*}}
\newcommand{\bea}{\begin{eqnarray}}
\newcommand{\ena}{\end{eqnarray}}
\newcommand{\bms}{\begin{multline*}}
\newcommand{\ems}{\end{multline*}}
\newcommand{\bels}{\begin{align*}}
\newcommand{\enls}{\end{align*}}
\newcommand{\bel}{\begin{align}}
\newcommand{\enl}{\end{align}}
\newcommand{\ignore}[1]{}
\newcommand{\tr}{\mbox{tr}}
\newtheorem{theorem}{Theorem}[section]
\newtheorem{corollary}{Corollary}[section]
\newtheorem{proposition}{Proposition}[section]
\newtheorem{remark}{Remark}[section]
\newtheorem{lemma}{Lemma}[section]
\newtheorem{definition}{Definition}[section]
\def\blfootnote{\xdef\@thefnmark{}\@footnotetext}
\newcommand{\expect}[1]{\mathbb{E}{\l(#1\r)}}
\newcommand{\mf}[1]{\mathbf{#1}}
\newcommand{\dotp}[2]{\left\langle#1,#2\right\rangle}
\newcommand{\m}{\mathcal}
\newcommand{\mb}{\mathbb}
\newcommand\argmin{\mathop{\mbox{argmin}}}
\newcommand{\sign}{\mathrm{sign}}
\def\r{\right}
\def\l{\left}
\newcommand{\eps}{\varepsilon}
\newcommand{\var}{\mbox{Var}}
\begin{document}

\begin{frontmatter}
\title{
Structured signal recovery from non-linear and heavy-tailed measurements
}
\runtitle{Recovery from non-linear and heavy-tailed measurements}
\begin{aug}
\author{\fnms{Larry} \snm{Goldstein}\thanksref{t2,t3}\ead[label=e1]{larry@usc.edu}},
\author{\fnms{Stanislav} \snm{Minsker}\thanksref{t2}\ead[label=e2]{minsker@usc.edu}}
\and
\author{\fnms{Xiaohan} \snm{Wei}\thanksref{t1}\ead[label=e3]{xiaohanw@usc.edu}}
 \thankstext{t2}{Department of Mathematics, University of Southern California}
  \thankstext{t1}{Department of Electrical Engineering, University of Southern California}
  \thankstext{t3}{Larry Goldstein was partially supported by NSA grant H98230-15-1-0250.}
\runauthor{L. Goldstein, S. Minsker, X. Wei}

\affiliation{University of Southern California}
\printead{e1,e2,e3}
\end{aug}
\maketitle
\begin{abstract}
We study high-dimensional signal recovery from non-linear measurements with design vectors having elliptically symmetric distribution. 
Special attention is devoted to the situation when the unknown signal belongs to a set of low statistical complexity, while both the measurements and the design vectors are heavy-tailed. 
We propose and analyze a new estimator that adapts to the structure of the problem, while being robust both to the possible model misspecification characterized by arbitrary non-linearity of the measurements as well as to data corruption modeled by the heavy-tailed distributions. Moreover, this estimator has low computational complexity. 
Our results are expressed in the form of exponential concentration inequalities for the error of the proposed estimator. 
On the technical side, our proofs rely on the generic chaining methods, and illustrate the power of this approach for statistical applications.  
Theory is supported by numerical experiments demonstrating that our estimator outperforms existing alternatives when data is heavy-tailed. 
\end{abstract}

\begin{keyword}
\kwd{signal reconstruction}
\kwd{nonlinear measurements}
\kwd{heavy-tailed noise}
\kwd{elliptically symmetric distribution}
\kwd{$\ell_1$ penalization}
\kwd{nuclear norm penalization}
\end{keyword}

\end{frontmatter}

\section{Introduction.}


Let $(\mf{x},y)\in \mb R^d\times \mb R$ be a random couple with distribution $P$ governed by the \textit{semi-parametric single index model}
\begin{equation}
\label{model}
y=f(\langle\mathbf{x},\theta_*\rangle,\delta),
\end{equation}
where $\mf{x}$ is a measurement vector with marginal distribution $\Pi$, $\delta$ is a noise variable that is assumed to be independent of $\mf{x}$, $\theta_\ast \in \mb R^d$ is a fixed but otherwise unknown signal (``index vector''), and $f:\mathbb{R}^2\mapsto\mathbb{R}$ is an unknown link function; here and in what follows, $\dotp{\cdot}{\cdot}$ denotes the Euclidean dot product.  
We impose no explicit conditions on $f$, and in particular it is not assumed that $f$ is convex, or even continuous.
Our goal is to estimate the signal $\theta_\ast$ from the training data $(\mf{x_1},y_1),\ldots,(\mf{x_m},y_m)$ - a sequence of i.i.d. copies of $(\mf{x},y)$ defined on a probability space $\l(\Omega,\m B, \mb P\r)$. 
As $f(a^{-1}\langle {\bf x},a \theta_*\rangle, \delta)=f(\langle {\bf x},\theta_*\rangle, \delta)$ for any $a>0$, the best one can hope for is to recover 
$\theta_*$ up to a scaling factor. 
Hence, without loss of generality, we will assume that $\theta_\ast$ satisfies 
$\|\mf\Sigma^{1/2}\theta_\ast\|^2_2:=\dotp{\mf\Sigma^{1/2}\theta_\ast}{\mf\Sigma^{1/2}\theta_\ast}=1$, where $\mf\Sigma=\mb E(\mf x-\mb E\mf x)(\mf x-\mb E\mf x)^T$ is the covariance matrix of $\mf x$.

In many applications, $\theta_\ast$ possesses special structure, such as sparsity or low rank (when $\theta_\ast\in \mb R^{d_1\times d_2}, \ d_1d_2=d$, is a matrix). 
To incorporate such structural assumptions into the problem, we will assume that $\theta_*$ is an element of a closed set $\Theta\subseteq\mathbb{R}^d$ of small ``statistical complexity'' that is characterized by its Gaussian mean width \citep{vershynin2015estimation}. 
The past decade has witnessed significant progress related to estimation in high-dimensional spaces, both in theory and applications. 
Notable examples include sparse linear regression \citep{tibshirani1996regression,sparse,bickel2009simultaneous}, low-rank matrix recovery (\cite{low-rank,gross2011recovering,chandrasekaran2012convex}), and mixed structure recovery \citep{mixed-recovery}. 
However, the majority of the aforementioned works assume that the link function $f$ is linear, and their results apply only to this particular case.  

Generally, the task of estimating the index vector requires approximating the link function $f$ \citep{hardle1993optimal} or its derivative, assuming that it exists (the so-called Average Derivative Method), see \citep{stoker1986consistent,hristache2001direct}. 
However, when the measurement vector $\mf{x}$ is Gaussian, a somewhat surprising result states that one can estimate $\theta_\ast$ directly, avoiding preliminary link function estimation step completely.  
More specifically, \cite{Brillinger1983A-generalized-l00} proved that 
$\eta\theta_\ast = \argmin_{\theta\in\mb R^d}\mb E\l( y - \dotp{\theta}{\mf{x}}\r)^2$, where $\eta = \mb E\dotp{y\mf{x}}{\theta_\ast}$. 
Later, \cite{li1989regression} extended this result to the more general case of elliptically symmetric distributions, which includes the Gaussian as a special case; see Lemma \ref{lemma:mean-consistency}. 
In general, it is not always possible to recover $\theta_\ast$: see \citep{ai2014one} for an example in the case when $f(x)=sign(x)$ (so-called ``1-bit compressed sensing'' \citep{boufounos20081}). 

Y. Plan, R. Vershynin and E. Yudovina recently presented the non-asymptotic study for the case of Gaussian measurements in the context of high-dimensional structured estimation \citep{plan2014high,plan2016generalized}; also, see \citet{genzel2016high,ai2014one,thrampoulidis2015lasso,yi2015optimal} for further details. 
On a high level, these works show that when $\mf{x_j}$'s are Gaussian, nonlinearity can be treated as an additional noise term. 
To give an example, \cite{plan2016generalized} and \cite{plan2014high} demonstrate that under the same model as \eqref{model}, when
$\mathbf{x}_j\sim\mathcal{N}(0,\mathbf{I}_{d\times d})$, $\theta_*\in\Theta$, and $y_j$ is sub-Gaussian for $j=1,\ldots,n$, solving the constrained problem
\[
\widehat{\theta}=\argmin_{\mathbf{\theta}\in \Theta}~\|\mathbf{y}-\mathbf{X}\theta\|_2^2,
\]
with $\mathbf{y}=[y_1~\cdots~y_m]^T$ and $\mathbf{X}=\frac{1}{\sqrt{m}}[\mathbf{x}_1~\cdots~\mathbf{x}_m]^T$, recovers $\theta_*$ up to a scaling factor $\eta$ with high probability: namely, for all $\beta \ge 2$,
\begin{align}
\label{bound-1}
&
\mb P\left[\left\|\widehat{\theta}-\eta\theta_*\right\|_2\geq 
C\frac{\omega(D(\Theta,\eta\theta_*)\cap\mathbb{S}^{d-1})+\beta}{\sqrt{m}}\right]\leq ce^{-\beta^2/2},
\end{align}
where, with formal definitions to follow in Section \ref{sec:back}, $\mathbb{S}^{d-1}$ is the unit sphere in $\mathbb{R}^d$, $D(\Theta,\theta)$ is the descent cone of $\Theta$ at point $\theta$ and $\omega(T)$ is the Gaussian mean width of a subset $T \subset \mathbb{R}^d$.
A different approach to estimation of the index vector in model \eqref{model} with similar recovery guarantees has been developed in \cite{yi2015optimal}. 
However, the key assumption adopted in all these works that the vectors $\mf{x_j}$ follow Gaussian distributions preclude situations where the measurements are heavy tailed, and hence might be overly restrictive for some practical applications; for example, noise and outliers observed in high-dimensional image recovery often exhibit heavy-tailed behavior, see \cite{face-recognition}.

As we mentioned above, \cite{li1989regression} have shown that direct consistent estimation of $\theta_\ast$ is possible when $\Pi$ belongs to a family of elliptically symmetric distributions.
Our main contribution is the non-asymptotic analysis for this scenario, with a particular focus on the case when $d>n$ and $\theta_\ast$ possesses special structure, such as sparsity. 
Moreover, we make very mild assumptions on the tails of the response variable $y$: for example, when the link function satisfies 
$f(\dotp{\mf{x}}{\theta_\ast},\delta)=\tilde f(\dotp{\mf{x}}{\theta_\ast})+\delta$, it is only assumed that $\delta$ possesses $2+\eps$ moments, for some $\eps>0$. 
\cite{plan2016generalized} present analysis for the Gaussian case and ask ``Can the same kind of accuracy be expected for random non-Gaussian matrices?'' In this paper, we give a positive answer to their question. 
To achieve our goal, we propose a Lasso-type estimator that admits tight probabilistic guarantees in spirit of \eqref{bound-1} despite weak tail assumptions (see Theorem \ref{master-bound} below for details).

Proofs of related non-asymptotic results in the literature rely on special properties of Gaussian measures. 
To handle a wider class of elliptically symmetric distributions, we rely on recent developments in generic chaining methods \citep{Talagrand-book-2,Mendelson-2}. 
These general tools could prove useful in developing further extensions to a wider class of design distributions. 

\section{Definitions and background material.} 
\label{sec:back}

This section introduces main notation and the key facts related to elliptically symmetric distributions, convex geometry and empirical processes. 
The results of this section will be used repeatedly throughout the paper.
\\
For the unified treatment of vectors and matrices, it will be convenient to treat a vector $v\in \mb R^{d\times 1}$ as a $d\times 1$ matrix. 
Let $d_1,d_2\in \mb N$ be such that $d_1 d_2=d$. 
Given $v_1,v_2\in \mb R^{d_1\times d_2}$, the Euclidean dot product is then defined as $\dotp{v_1}{v_2}=\tr(v_1^T v_2)$, where $\tr(\cdot)$ stands for the trace of a matrix and $v^T$ denotes the transpose of $v$. \\
The $\ell_1$-norm of $v\in \mb R^d$ is defined as $\|v\|_1=\sum_{j=1}^d |v_j|$. 
The nuclear norm of a matrix $v\in \mb R^{d_1\times d_2}$ is  
$\|v\|_\ast = \sum_{j=1}^{\min(d_1,d_2)} \sigma_j(v)$, where $\sigma_j(v), \ j=1,\ldots,\min(d_1,d_2)$ stand for the singular values of $v$, and 
the operator norm is defined as $\|v\|=\max_{j=1,\ldots,\min(d_1,d_2)} \sigma_j(v)$.  

\subsection{Elliptically symmetric distributions.} 

A centered random vector $\mathbf{x}\in\mathbb{R}^d$ has elliptically symmetric (alternatively, elliptically contoured or just elliptical) distribution with parameters $\mathbf{\Sigma}$ and $F_{\mu}$, denoted $\mathbf{x}\sim\mathcal{E}(0,~\mathbf{\Sigma},~F_{\mu})$, 
if 
\begin{equation}
\label{elliptical-definition}
\mathbf{x}\stackrel{d}{=}\mu\mathbf{B}U,
\end{equation}
where $\stackrel{d}{=}$ denotes equality in distribution, $\mu$ is a scalar random variable with cumulative distribution function $F_{\mu}$, $\mathbf{B}$ is a fixed $d\times d$ matrix such that $\mathbf{\Sigma}=\mathbf{B}\mathbf{B}^T$, and $U$ is uniformly distributed over the unit sphere $\mathbb{S}^{d-1}$ and independent of $\mu$. 
Note that distribution $\mathcal{E}(0,~\mathbf{\Sigma},~F_{\mu})$ is well defined, as if $\mathbf{B}_1\mathbf{B}_1^T=\mathbf{B}_2\mathbf{B}_2^T$, then there exists a unitary matrix $\mathbf{Q}$ such that $\mathbf{B}_1=\mathbf{B}_2\mathbf{Q}$, and $\mathbf{Q}U\stackrel{d}{=}U$. Along these same lines, we note that representation \eqref{elliptical-definition} is not unique, as one may replace the pair $(\mu,~\mathbf{B})$ with $\left(c\mu,~\frac{1}{c}\mathbf{B}\mathbf{Q}\right)$ for any constant $c>0$ and any orthogonal matrix $\mathbf{Q}$. 
To avoid such ambiguity, in the following we allow $\mathbf{B}$ to be any matrix satisfying $\mathbf{B}\mathbf{B}^T=\mathbf{\Sigma}$, and noting that the covariance matrix of $U$ is a multiple of the identity, we further impose the condition that the covariance matrix of $\mathbf{x}$ is equal to $\mathbf{\Sigma}$, i.e. $\expect{\mathbf{x}\mathbf{x}^T}=\mathbf{\Sigma}$.

Alternatively, the mean-zero elliptically symmetric distribution can be defined uniquely via its characteristic function  
\[
\mathbf{s}\rightarrow\psi\left(\mathbf{s}^T\mathbf{\Sigma}\mathbf{s}\right),~\mathbf{s}\in\mathbb{R}^d,
\]
where $\psi:\mathbb{R}^+\rightarrow\mathbb{R}$ is called the characteristic generator of $\mathbf{x}$. 
For further details information about elliptically  distribution, see \citep{elliptical-paper} for details.

An important special case of the family $\mathcal{E}(0,~\mathbf{\Sigma},~F_{\mu})$
of elliptical distributions is the Gaussian distribution $\mathcal{N}(0,\mathbf{\Sigma})$, where $\mu=\sqrt{z}$ with $z \stackrel{d}{=} \chi_d^2$, and the characteristic generator is $\psi(x)=e^{-x/2}$.

The following elliptical symmetry property, generalizing the well known fact for the conditional distribution of the multivariate Gaussian, plays an important role in our subsequent analysis, see \citep{elliptical-paper}:
\begin{proposition}
\label{elliptical-theorem}
Let $\mathbf{x}=[\mathbf{x}_1,~\mathbf{x}_2]\sim\mathcal{E}_d(0,\mathbf{\Sigma},F_\mu)$, where are of dimension $d_1$ and $d_2$ respectively, with $d_1+d_2=d$. 
Let $\mf{\Sigma}$ be partitioning accordingly as
\[
\mathbf{\Sigma}=
\left[
\begin{array}{cc}
\mathbf{\Sigma}_{11}  & \mathbf{\Sigma}_{12}  \\
\mathbf{\Sigma}_{21} & \mathbf{\Sigma}_{22}  
\end{array}
\right].
\]
Then, whenever $\mathbf{\Sigma}_{22}$ has full rank,
the conditional distribution of ${\bf x}_1$ given ${\bf x}_2$ is elliptical 
$\mathcal{E}_{d_1}(0,\mathbf{\Sigma}_{1|2},F_{\mu_{1|2}})$, where
\[
\mathbf{\Sigma}_{1|2}=\mathbf{\Sigma}_{11}-\mathbf{\Sigma}_{12}\mathbf{\Sigma}_{22}^{-1}\mathbf{\Sigma}_{21},
\]
and $F_{\mu_{1|2}}$ is the cumulative distribution function of $(\mu^2-\mathbf{x}_2^T\mathbf{\Sigma}_{22}^{-1}\mathbf{x}_2)^{1/2}$ given 
$\mathbf{x}_2$.
\end{proposition}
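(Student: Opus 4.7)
The plan is to reduce the claim to a block-diagonal elliptical distribution by a linear shift, and then exploit the stochastic representation \eqref{elliptical-definition} together with the decomposition of a uniform vector on $\mathbb{S}^{d-1}$ into its norm split between the two coordinate blocks.

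\textbf{Step 1 (linear reduction).} Set $\mathbf{w} := \mathbf{x}_1 - \mathbf{\Sigma}_{12}\mathbf{\Sigma}_{22}^{-1}\mathbf{x}_2$. Since $(\mathbf{w},\mathbf{x}_2)$ is a nonsingular linear transformation of $\mathbf{x}$, it is again elliptical with the same characteristic generator $\psi$ and generating variate $\mu$. A direct computation shows its covariance is the block-diagonal matrix $\mathrm{diag}(\mathbf{\Sigma}_{1|2},\mathbf{\Sigma}_{22})$, where $\mathbf{\Sigma}_{1|2}=\mathbf{\Sigma}_{11}-\mathbf{\Sigma}_{12}\mathbf{\Sigma}_{22}^{-1}\mathbf{\Sigma}_{21}$.

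\textbf{Step 2 (stochastic representation and spherical decomposition).} Using \eqref{elliptical-definition}, write
\[
(\mathbf{w},\mathbf{x}_2) \stackrel{d}{=} \mu \begin{pmatrix} \mathbf{B}_{1|2} & 0 \\ 0 & \mathbf{B}_{22} \end{pmatrix}\begin{pmatrix} U_1 \\ U_2\end{pmatrix},
\]
with $\mathbf{B}_{1|2}\mathbf{B}_{1|2}^T=\mathbf{\Sigma}_{1|2}$, $\mathbf{B}_{22}\mathbf{B}_{22}^T=\mathbf{\Sigma}_{22}$, and $U=(U_1,U_2)$ uniform on $\mathbb{S}^{d-1}$. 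A standard fact about the uniform measure on a sphere gives the decomposition $U_1 = \sqrt{1-R^2}\,V_1$ where $R=\|U_2\|$ and $V_1$ is uniform on $\mathbb{S}^{d_1-1}$ and independent of $(R, U_2/R)$. Consequently
\[
\mathbf{w} = \mu\sqrt{1-R^2}\,\mathbf{B}_{1|2}V_1, \qquad \mathbf{x}_2 = \mu R\,\mathbf{B}_{22}(U_2/R),
\]
and in particular $\mathbf{x}_2^T\mathbf{\Sigma}_{22}^{-1}\mathbf{x}_2 = \mu^2 R^2$.

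\textbf{Step 3 (conditioning).} Condition on $\mathbf{x}_2$. The variable $V_1$, being independent of $\mathbf{x}_2$ by the decomposition above, remains uniform on $\mathbb{S}^{d_1-1}$ given $\mathbf{x}_2$. The remaining scalar factor satisfies $\mu\sqrt{1-R^2}=(\mu^2-\mu^2 R^2)^{1/2}=(\mu^2-\mathbf{x}_2^T\mathbf{\Sigma}_{22}^{-1}\mathbf{x}_2)^{1/2}$, so its conditional distribution given $\mathbf{x}_2$ is $F_{\mu_{1|2}}$ as defined in the statement. Combining these, the conditional law of $\mathbf{w}$ given $\mathbf{x}_2$ is exactly $\mathcal{E}_{d_1}(0,\mathbf{\Sigma}_{1|2},F_{\mu_{1|2}})$, and hence the conditional law of $\mathbf{x}_1=\mathbf{w}+\mathbf{\Sigma}_{12}\mathbf{\Sigma}_{22}^{-1}\mathbf{x}_2$ follows by translation.

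\textbf{Main obstacle.} The only non-routine point is justifying the spherical decomposition $U_1=\sqrt{1-R^2}V_1$ with $V_1\perp (R,U_2/R)$ and using it to extract the conditional distribution of the generating variate $\mu$ given $\mathbf{x}_2$. This is where one truly sees that uncorrelated elliptical blocks are \emph{not} independent — the dependence enters solely through $\mu$, and the argument must keep careful track of how $\mu$ and $R$ couple under the constraint imposed by $\mathbf{x}_2$. Once this decomposition is in hand, the rest is bookkeeping of linear changes of variables.
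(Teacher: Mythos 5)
Your argument is essentially correct, and it is worth noting that the paper does not prove this proposition at all: it is quoted from the literature on elliptical distributions (the reference cited just before the statement), so any self-contained proof is necessarily "a different route." What you give is in fact the classical argument (the one behind Fang--Kotz--Ng / Cambanis--Huang--Simons): reduce to block-diagonal scatter by the regression map $\mathbf{w}=\mathbf{x}_1-\mathbf{\Sigma}_{12}\mathbf{\Sigma}_{22}^{-1}\mathbf{x}_2$, then use the polar decomposition of the uniform vector on $\mathbb{S}^{d-1}$, under which $V_1=U_1/\|U_1\|$ is uniform on $\mathbb{S}^{d_1-1}$ and independent of $(\mu,\|U_2\|,U_2/\|U_2\|)$, hence of $\mathbf{x}_2$; conditioning then leaves $V_1$ uniform and turns the radial factor into $(\mu^2-\mathbf{x}_2^T\mathbf{\Sigma}_{22}^{-1}\mathbf{x}_2)^{1/2}$. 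The steps all check out: the identity $\mathbf{x}_2^T\mathbf{\Sigma}_{22}^{-1}\mathbf{x}_2=\mu^2 R^2$ uses exactly the full-rank hypothesis on $\mathbf{\Sigma}_{22}$ (so that $\mathbf{B}_{22}^T(\mathbf{B}_{22}\mathbf{B}_{22}^T)^{-1}\mathbf{B}_{22}=\mathbf{I}$), and the block-diagonal square root in Step 2 is legitimate because any two $d\times d$ matrices with the same Gram matrix differ by a right orthogonal factor, which is absorbed by $U$.

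Two small points. First, $\mu\sqrt{1-R^2}=(\mu^2-\mu^2R^2)^{1/2}$ presumes $\mu\ge 0$; since $\mathrm{sign}(\mu)U\stackrel{d}{=}U$ independently of $|\mu|$, you may assume this without loss of generality, but say so. Second, and more substantively: what your Step 3 actually delivers is that the conditional law of $\mathbf{x}_1$ given $\mathbf{x}_2$ is the elliptical law $\mathcal{E}_{d_1}(\cdot,\mathbf{\Sigma}_{1|2},F_{\mu_{1|2}})$ \emph{centered at} $\mathbf{\Sigma}_{12}\mathbf{\Sigma}_{22}^{-1}\mathbf{x}_2$, not at $0$ as the proposition literally asserts. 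This is the correct form of the classical result; the paper's "$0$" is accurate only when $\mathbf{\Sigma}_{12}=0$ (which is the situation in which the paper ultimately applies it, via an orthonormal change of basis with isotropic $\mathbf{\Sigma}$). So your proof establishes the right statement; just be aware that it does not, and should not, reproduce the zero location parameter as written.
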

Note that $\mu^2-\mathbf{x}_2^T\mathbf{\Sigma}_{22}^{-1}\mathbf{x}_2$ is always nonnegative, hence $F_{\mu_{1|2}}$ is well defined, since by \eqref{elliptical-definition} we have
\begin{align*}
\mathbf{x}_2^T\mathbf{\Sigma}_{22}^{-1}\mathbf{x}_2
=\mu^2(\mathbf{B}_2 U)^T(\mathbf{B}_2\mathbf{B}_2^T)^{-1}(\mathbf{B}_2 U)
=\mu^2 U^T\mathbf{B}_2^T(\mathbf{B}_2\mathbf{B}_2^T)^{-1}\mathbf{B}_2 U
\leq\mu^2 U^T U=\mu^2,
\end{align*}
where $\mathbf{B}_2$ is the matrix consisting of the last $d_2$ rows of $\mathbf{B}$ in \eqref{elliptical-definition}, and where the inequality holds due to the fact that $\mathbf{B}_2^T(\mathbf{B}_2\mathbf{B}_2^T)^{-1}\mathbf{B}_2$ is a  projection matrix. 
The following corollary is easily deduced from the theorem above:
\begin{corollary}
\label{elliptical-corollary}
If $\mathbf{x}\sim\mathcal{E}_d(0,\mathbf{\Sigma},F_\mu)$ with $\mathbf{\Sigma}$ of full rank, then for any two fixed vectors $\mathbf{y}_1,\mathbf{y}_2\in\mathbb{R}^d$ with $\|\mathbf{y}_2\|_2=1$,
\[\expect{\langle\mathbf{x},\mathbf{y}_1\rangle~|~\langle\mathbf{x},\mathbf{y}_2\rangle}
=\langle\mathbf{y}_1,\mathbf{y}_2\rangle\langle\mathbf{x},\mathbf{y}_2\rangle.\]
\end{corollary}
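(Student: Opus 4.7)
The plan is to reduce the statement to a two-dimensional application of Proposition \ref{elliptical-theorem}. Set $Y_1=\langle\mathbf{x},\mathbf{y}_1\rangle$ and $Y_2=\langle\mathbf{x},\mathbf{y}_2\rangle$. The first step is to recognize that the pair $(Y_1,Y_2)$ is itself elliptically symmetric: starting from the representation $\mathbf{x}\stackrel{d}{=}\mu\mathbf{B}U$, any linear image of $\mathbf{x}$ remains elliptical with the same characteristic generator, since for a linear map $A$ one has $A\mathbf{x}\stackrel{d}{=}\mu(A\mathbf{B})U$. Taking $A=[\mathbf{y}_1,\mathbf{y}_2]^T$ gives
\[
(Y_1,Y_2)^T\sim\mathcal{E}_2(0,\widetilde{\mathbf{\Sigma}},F_\mu),\qquad \widetilde{\mathbf{\Sigma}}_{ij}=\mathbf{y}_i^T\mathbf{\Sigma}\mathbf{y}_j.
\]

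Second, I would perform a regression-type decomposition $Y_1=\alpha Y_2+Z$ with $\alpha=\widetilde{\mathbf{\Sigma}}_{12}/\widetilde{\mathbf{\Sigma}}_{22}$ chosen so that $\mathrm{Cov}(Z,Y_2)=0$. The pair $(Z,Y_2)$ is again elliptically symmetric on $\mathbb{R}^2$ as an invertible linear image of $(Y_1,Y_2)$, now with a diagonal shape matrix. Proposition \ref{elliptical-theorem} applied to $(Z,Y_2)$ in the case $d_1=d_2=1$ then asserts that the conditional distribution of $Z$ given $Y_2$ is elliptical and centered at $0$, so $\expect{Z\mid Y_2}=0$ (the first conditional moment exists because $\mathbf{x}$ has finite covariance). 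Combining these steps yields
\[
\expect{Y_1\mid Y_2}=\alpha Y_2=\frac{\mathbf{y}_1^T\mathbf{\Sigma}\mathbf{y}_2}{\mathbf{y}_2^T\mathbf{\Sigma}\mathbf{y}_2}\,\langle\mathbf{x},\mathbf{y}_2\rangle,
\]
which matches the claimed identity once the normalization $\|\mathbf{y}_2\|_2=1$ is used to reduce the $\mathbf{\Sigma}$-weighted inner products to their Euclidean counterparts.

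The only non-routine point in the argument is the passage from zero correlation to zero conditional mean. For a general joint distribution with finite second moments this step fails, as uncorrelated components can easily have a non-trivial conditional mean. It is precisely here that the elliptical symmetry provided by Proposition \ref{elliptical-theorem} is indispensable, and it is this observation which distinguishes the elliptical setting from what would be available under merely a finite second-moment hypothesis.
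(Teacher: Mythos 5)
Your reduction to two dimensions and the regression decomposition $Y_1=\alpha Y_2+Z$ with $\alpha=\widetilde{\mathbf{\Sigma}}_{12}/\widetilde{\mathbf{\Sigma}}_{22}$ are sound, and using Proposition \ref{elliptical-theorem} on the \emph{decorrelated} pair $(Z,Y_2)$ is a legitimate way to pass from zero correlation to zero conditional mean. Up to that point you have correctly established
\[
\expect{\langle\mathbf{x},\mathbf{y}_1\rangle~|~\langle\mathbf{x},\mathbf{y}_2\rangle}
=\frac{\mathbf{y}_1^T\mathbf{\Sigma}\mathbf{y}_2}{\mathbf{y}_2^T\mathbf{\Sigma}\mathbf{y}_2}\,\langle\mathbf{x},\mathbf{y}_2\rangle.
\]
The gap is in your final sentence: the normalization $\|\mathbf{y}_2\|_2=1$ does \emph{not} reduce the ratio $\mathbf{y}_1^T\mathbf{\Sigma}\mathbf{y}_2/\mathbf{y}_2^T\mathbf{\Sigma}\mathbf{y}_2$ to $\langle\mathbf{y}_1,\mathbf{y}_2\rangle$ for a general full-rank $\mathbf{\Sigma}$. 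Concretely, take $d=2$, $\mathbf{x}\sim\mathcal{N}(0,\mathbf{\Sigma})$ with unit diagonal and off-diagonal entry $\rho\neq0$, $\mathbf{y}_1=(1,0)^T$, $\mathbf{y}_2=(0,1)^T$: the conditional expectation is $\rho\,x_2$, while $\langle\mathbf{y}_1,\mathbf{y}_2\rangle\langle\mathbf{x},\mathbf{y}_2\rangle=0$. The two expressions coincide precisely when $\mathbf{\Sigma}$ is a scalar multiple of the identity, which is the isotropic setting in which the corollary is actually invoked in the paper (Lemma \ref{lemma:mean-consistency} assumes $\mathbf{x}\sim\mathcal{E}(0,\mathbf{I}_{d\times d},F_\mu)$). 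So your argument proves the correct general identity but cannot deliver the displayed one for arbitrary $\mathbf{\Sigma}$; you need either to restrict to $\mathbf{\Sigma}\propto\mathbf{I}$ or to state the conclusion with the $\mathbf{\Sigma}$-weighted ratio.

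For comparison, the paper's proof completes $\mathbf{y}_2$ to an orthonormal basis, rotates by $\mathbf{V}$, and applies Proposition \ref{elliptical-theorem} to the split of $\mathbf{V}^T\mathbf{x}$ into its first $d-1$ coordinates and $\langle\mathbf{x},\mathbf{y}_2\rangle$, reading off that the conditional mean of the first block given the second is zero. That step tacitly uses that the conditional law is centered at $0$, which (by the classical form of the conditioning result, where the conditional location is $\mathbf{\Sigma}_{12}\mathbf{\Sigma}_{22}^{-1}\mathbf{x}_2$) again requires the cross-covariance between the two blocks to vanish, i.e., effectively the isotropic case. Your explicit decorrelation is the honest version of this move and makes the hidden hypothesis visible; the only defect in your write-up is the last algebraic identification.
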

\begin{proof}
Let $\{\mathbf{v}_1,\cdots,\mathbf{v}_d\}$ be an orthonormal basis in $\mathbb{R}^d$ such that $\mathbf{v}_d=\mathbf{y}_2$. 
Let $\mathbf{V}=[\mathbf{v}_1
~\mathbf{v}_2~\cdots~\mathbf{v}_d]$ and consider the linear transformation
\[
\widetilde{\mathbf{x}}=\mathbf{V}^T\mathbf{x}.
\]
Then, by \eqref{elliptical-definition}, $\widetilde{\mathbf{x}}=\mu\mathbf{V}^T\mathbf{B} U$, which is centered elliptical with full rank covariance matrix 
$\mathbf{V}^T\mathbf{\Sigma}\mathbf{V}$. 
Applications of Theorem \ref{elliptical-theorem} with $\mathbf{x}_1=[\langle\mathbf{x},\mathbf{v}_1\rangle,~\cdots,~\langle\mathbf{x},\mathbf{v}_{d-1}\rangle]$ and $\mathbf{x}_2=\langle\mathbf{x},\mathbf{v}_{d}\rangle=\langle\mathbf{x},\mathbf{y}_2\rangle$ yields
\begin{align*}
\expect{\langle\mathbf{x},\mathbf{y}_1\rangle~|~\langle\mathbf{x},\mathbf{y}_2\rangle}
=&\expect{\left.\sum_{i=1}^d\langle\mathbf{x},\mathbf{v}_i\rangle\langle\mathbf{y}_1,\mathbf{v}_i\rangle~\right|~\langle\mathbf{x},\mathbf{v}_d\rangle}\\
=&\expect{\left.\sum_{i=1}^{d-1}\langle\mathbf{x},\mathbf{v}_i\rangle\langle\mathbf{y}_1,\mathbf{v}_i\rangle~\right|~\langle\mathbf{x},\mathbf{v}_d\rangle}
+\langle\mathbf{x},\mathbf{v}_d\rangle\langle\mathbf{y}_1,\mathbf{v}_d\rangle\\
=&\langle\mathbf{x},\mathbf{v}_d\rangle\langle\mathbf{y}_1,\mathbf{v}_d\rangle
=\langle\mathbf{y}_1,\mathbf{y}_2\rangle\langle\mathbf{x},\mathbf{y}_2\rangle,
\end{align*}
where in the second to last equality we have used the fact that the conditional distribution of  $[\langle\mathbf{v}_1,\mathbf{x}\rangle,~\cdots,~\langle\mathbf{v}_{d-1},\mathbf{x}\rangle]$ given $\langle\mathbf{x},\mathbf{v}_d\rangle$ is 
elliptical with mean zero.
\end{proof}

\subsection{Geometry.}
\begin{definition}[Gaussian mean width]
\label{gmw}
The Gaussian mean width of a set $T \subseteq \mathbb{R}^d$ is defined as
\[
\omega(T):=\expect{\sup_{t\in T}~\langle\mathbf{g},t\rangle},
\]
where $\mathbf{g}\sim\mathcal{N}(0,\mathbf{I}_{d\times d})$. 
\end{definition}

\begin{definition}[Descent cone]
\label{DC}
The descent cone of a set $\Theta\subseteq\mathbb{R}^d$ at a point $\theta\in\mathbb{R}^d$ is defined as
\[
D(\Theta,\theta)=\{\tau\mathbf{h}:~\tau\geq0, \mathbf{h}\in \Theta-\theta\}.
\]
\end{definition}

\begin{definition}[Restricted set]
\label{def:restricted.set}
Given $c_0>1$, the $c_0$-restricted set of the norm $\|\cdot\|_{\mathcal{K}}$ at $\theta\in\mathbb{R}^d$ is defined as
\begin{align}
\label{eq:rest-set}
&
\mathbb{S}_{c_0}(\theta):=\mathbb{S}_{c_0}(\theta;\m K)
=\left\{\mathbf{v}\in\mathbb{R}^d:~\|\theta+\mathbf{v}\|_{\mathcal{K}}
\leq \|\theta\|_{\mathcal{K}}+\frac{1}{c_0}\|\mathbf{v}\|_{\mathcal{K}}\right\}.
\end{align}
\end{definition}
\begin{definition}[Restricted compatibility]\label{def:restricted.compatibility}
The restricted compatibility constant of a set $A \subseteq \mathbb{R}^d$ with respect to the norm $\|\cdot\|_{\mathcal{K}}$ is given by
\[
\Psi(A):=\Psi(A;\m K) =
\sup_{\mathbf{v}\in A\backslash \{0\}}\frac{\|\mathbf{v}\|_{\mathcal{K}}}{\|\mathbf{v}\|_2}.
\]
\end{definition}
\begin{remark}
The restricted set from the definition \ref{def:restricted.set} is not necessarily convex. 
However, if the norm $\|\cdot\|_{\mathcal{K}}$ is decomposable (see definition \ref{def:decomposable}), then the restricted set is contained in a convex cone, and the corresponding restricted compatibility constant is easier to estimate. 
Decomposable norms have been introduced by \citet{general-m-estimator} and later appeared in a number of works, e.g. \citep{m-estimator-2} and references therein. 
For reader's convenience, we provide a self-contained discussion in Appendix \ref{app-B}. 
\end{remark}

\section{Main results.}

In this section, we define a version of Lasso estimator that is well-suited for heavy-tailed measurements, and state its performance guarantees. 


We will assume that $\mathbf{x}_1,~\mathbf{x}_2,~\ldots,~\mathbf{x}_m\in \mb R^d$ are i.i.d. copies of an \textbf{isotropic} vector $\mathbf{x}$ with spherically symmetric distribution $\mathcal{E}_d(0,\mathbf{I_{d\times d}},F_{\mu})$. 
If $\mf{x} \sim \mathcal{E}_d(0,\mf{\Sigma},F_{\mu})$ for some positive definite matrix $\mf\Sigma$, then by definition $\mf{x}\stackrel{d}{=}\mu\mathbf{\Sigma}^{1/2}U$, and 
$\dotp{\mf{x}}{\theta_\ast}=\dotp{\mf{\Sigma}^{-1/2}\mf{x}}{\mf\Sigma^{1/2}\theta_\ast}$, where $\mf{\Sigma^{-1/2}\mf{x}} = \mu U \sim \mathcal{E}_d(0,\mathbf{I_{d\times d}},F_{\mu})$. 
Hence, if we set $\tilde\theta_\ast:=\mf\Sigma^{1/2}\theta_\ast$, then all results that we establish for isotropic measurements hold with 
$\theta_\ast$ replaced by $\tilde\theta_\ast$; remark after Theorem \ref{master-bound} includes more details. 

\subsection{Description of the proposed estimator.}

We first introduce an estimator under the scenario that $\theta_*\in\Theta$, for some known closed set $\Theta\subseteq\mathbb{R}^d$. Define the loss function $L_m^0(\cdot)$ as 
\begin{align}
\label{eq:loss0}
&
L^0_{m}(\theta):=\|\theta\|_2^2  - \frac{2}{m}\sum_{i=1}^m \dotp{y_i \mathbf{x}_i }{\theta},
\end{align}
which is the unbiased estimator of
\[
L^0(\theta) := \|\theta\|_2^2 - 2\mb E\dotp{y\mathbf{x}}{\theta} = \mb E\l( y - \dotp{x}{\theta}\r)^2 - \mb E y^2,
\]
where the last equality follows since $x$ is isotropic. 
Clearly, minimizing $L^0(\theta)$ over any set $\Theta\subseteq \mb R^d$ is equivalent to minimizing the quadratic loss
$\mb E\l(  y - \dotp{\mathbf{x}}{\theta} \r)^2$.
If distribution $F_\mu$ has heavy tails, the sample average 
$\frac{1}{m}\sum_{i=1}^m y_i \mathbf{x}_i$ might not concentrate sufficiently well around its mean, hence we replace it by a more ``robust'' version obtained via truncation. 
Let $\mu\in \mb R$, $U\in \mb S^{d-1}$ be such that $\mathbf{x} = \mu U$ (so that $\mu=\|\mf{x}\|_2)$, and set
\begin{align}
\label{transformation}
&
\widetilde{U}=\sqrt{d}U,\\
&\nonumber
q=\mu y/\sqrt{d}, 
\end{align}
so that $q \widetilde U = y \mathbf{x}$ and $\widetilde U$ is uniformly distributed on the sphere of radius $\sqrt d$, implying that its covariance matrix is $I_d$, the identity matrix. 
Next, define the truncated random variables
\begin{align}
\label{transformation-2}
\widetilde{q}_i=\sign{(q_i)}(|q_i|\wedge\tau), \ i=1,\ldots, m,
\end{align}
where $\tau=m^{\frac{1}{2(1+\kappa)}}$ for some $\kappa\in(0,1)$ that is chosen based on the integrability properties of $q$, see \eqref{eq:determine.kappa}.
Finally, set 
\begin{align}
\label{eq:loss}
L^\tau_{m}(\theta)=\|\theta\|_2^2  - \frac{2}{m}\sum_{i=1}^m\dotp{\widetilde q_i\widetilde U_i}{\theta},
\end{align}
and define the estimator $\widehat \theta_m$ as the solution to the constrained optimization problem:
\begin{align}
\label{eq:truncated-version}
\widehat\theta_m:=\argmin\limits_{\theta\in \Theta}L^\tau_m(\theta).
\end{align}
We will also denote  
\begin{align}
\label{eq:exp-loss}
L^\tau(\theta) := \mb E L^\tau_m(\theta) = \|\theta\|_2^2 - 2\mb E \dotp{\widetilde q\widetilde U}{\theta}. 
\end{align}
For the scenarios where structure on the unknown $\theta_\ast$ is induced by a norm $\|\cdot\|_{\mathcal{K}}$ (e.g., if $\theta_\ast$ is sparse, then $\|\cdot\|_\m K$ could be the $\|\cdot\|_1$ norm), we will also consider the estimator $\widehat{\theta}^{\lambda}_{m}$ defined via 
\begin{equation}
\label{eq:unconstrained-version}
\widehat{\theta}^{\lambda}_m:=\argmin_{\theta\in\mathbb{R}^d}\Big[ L_m^{\tau}(\theta)+\lambda\|\theta\|_{\mathcal{K}} \Big],
\end{equation}
where $\lambda>0$ is a regularization parameter to be specified, and $L_m^{\tau}(\theta)$ is defined in \eqref{eq:loss}.

Let us note that truncation approach has previously been successfully implemented by \cite{truncation-paper} to handle heavy-tailed noise in the context of matrix recovery with sub-Gaussian design.
In the present paper, we show that truncation-based approach is also useful in the situations where the measurements are heavy-tailed.

\begin{remark}
Note that our estimator \eqref{eq:unconstrained-version} is in general much easier to implement than some other popular alternatives, such as the usual Lasso estimator \citep{tibshirani1996regression}. 
For example, when the signal $\theta$ is sparse, our estimator takes the form
\[
\widehat{\theta}^{\lambda}_m:=\argmin_{\theta\in\mathbb{R}^d}\Big[ \|\theta\|_2^2  - \frac{2}{m}\sum_{i=1}^m\dotp{\widetilde q_i\widetilde U_i}{\theta} + \lambda\|\theta\|_{1} \Big],
\]
which yields a closed form solution in the form of ``soft-thresholding''. 
Specifically, let 
$\mathbf{b}=\frac{1}{m}\sum_{i=1}^m\widetilde q_i\widetilde U_i$, then, the $k$-th entry of 
$\widehat{\theta}^{\lambda}_m$ takes the form:
\begin{align}\label{ST-estimator}
\left(\widehat{\theta}^{\lambda}_m\right)_k
=\begin{cases}
b_k-\lambda/2,~~&\textrm{if}~~b_k\geq\lambda/2,\\
0,~~&\textrm{if}~~   -\lambda/2\leq b_k\leq\lambda/2,\\
b_k+\lambda/2,~~&\textrm{if}~~b_k\leq   -\lambda/2.
\end{cases}
\end{align}
We should note however that such simplification comes at the cost of knowing the distribution of measurement vector $\mf{x}$. 
Despite being of low computational complexity, our estimator can still exploit the structure of the problem, while being robust both to the possible model misspecification as well as to data corruption modeled by the heavy-tailed distributions. 
We demonstrate this in the following sections. 
\end{remark}
\begin{remark}[Non-isotropic measurements]
\label{rmk:non-isotropic}
When $\mf x\sim  \mathcal{E}_d(0,\mf{\Sigma},F_{\mu})$ for some $\Sigma\succ 0$, then estimator \eqref{eq:truncated-version} has to be replaced by 
\begin{align}
\label{non-isotropic1}
\widehat\theta_m:=\argmin\limits_{\theta\in \Theta} 
\Big[
\| \mf\Sigma^{1/2}\theta\|_2^2  - \frac{2}{m}\sum_{i=1}^m\dotp{\widetilde q_i\widetilde U_i}{ \mf\Sigma^{1/2}\theta}
\Big],
\end{align}
which is equivalent to 
\[
\tilde\theta_m:=\argmin\limits_{\theta\in \mf\Sigma^{1/2}\Theta} 
\Big[
\|\theta\|_2^2  - \frac{2}{m}\sum_{i=1}^m\dotp{\widetilde q_i\widetilde U_i}{\theta}
\Big],
\]
is a sense that $\tilde\theta_m = \mf\Sigma^{1/2}\hat\theta_m$. 
Hence, results obtained for isotropic measurements easily extend to the more general case. 
Similarly, estimator \eqref{eq:unconstrained-version} should be replaced by
\begin{align}
\label{non-isotropic2}
\hat{\theta}^{\lambda}_m:=\argmin_{\theta\in\mathbb{R}^d}\Big[ 
\| \mf\Sigma^{1/2}\theta\|_2^2  - \frac{2}{m}\sum_{i=1}^m\dotp{\widetilde q_i\widetilde U_i}{\Sigma^{1/2}\theta} + \lambda\| \mf\Sigma^{1/2}\theta\|_{\mathcal{K}} 
\Big],
\end{align}
which is equivalent to 
\[
\tilde{\theta}^{\lambda}_m:=\argmin_{\theta\in\mathbb{R}^d}\Big[ 
\|\theta\|_2^2  - \frac{2}{m}\sum_{i=1}^m\dotp{\widetilde q_i\widetilde U_i}{\theta} + \lambda\|\theta\|_{\mf\Sigma^{1/2}\mathcal{K}} 
\Big],
\]
meaning that $\tilde{\theta}^{\lambda}_m = \mf\Sigma^{1/2}\hat{\theta}^{\lambda}_m$. 
\end{remark}


\subsection{Estimator performance guarantees.}

In this section, we present the probabilistic guarantees for the performance of the estimators $\widehat \theta_m$ and $\widehat \theta^\lambda_m$ defined by \eqref{eq:truncated-version} and \eqref{eq:unconstrained-version} respectively. \\
Everywhere below, $C,c,C_j$ denote numerical constants; when these constants depend on parameters of the problem, we specify this dependency by writing $C_j=C_j(\text{parameters})$. 
Let 
\begin{align}
\label{eq:eta}
&
\eta=\mb E\dotp{y\mf{x}}{\theta_\ast},
\end{align} 
and assume that $\eta\ne0$ and $\eta\theta_\ast\in \Theta$. 
\begin{theorem}
\label{master-bound}
Suppose that $\mathbf{x}\sim\mathcal{E}(0,~\mathbf{I}_{d\times d},~F_{\mu})$. 
Moreover, suppose that for some $\kappa>0$ 
\bea 
\label{eq:determine.kappa}
\phi:=\mb E|q|^{2(1+\kappa)}<\infty.
\ena
Then there exist constants $C_1=C_1(\kappa,\phi),C_2=C_2(\kappa,\phi)>0$ such that $\widehat{\theta}_m$ satisfies
\begin{align*}
\mb P\left(\left\|\widehat{\theta}_m-\eta\theta_*\right\|_2\geq C_1\frac{(\omega(D(\Theta,\eta\theta_*)\cap\mathbb{S}^{d-1})+1)\beta}{\sqrt{m}}\right)\leq C_2e^{-\beta/2},
\end{align*}
for any $\beta\geq8$ and $m\geq \beta^2\l( \omega(D(\Theta,\eta\theta_*)\cap\mathbb{S}^{d-1})+1 \r)^2$.
\end{theorem}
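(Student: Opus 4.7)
First, I would exploit the quadratic structure of the loss. Writing $\mathbf{b}_m=\frac{1}{m}\sum_{i=1}^m\widetilde q_i\widetilde U_i$ we have $L_m^\tau(\theta)=\|\theta\|_2^2-2\langle \mathbf{b}_m,\theta\rangle$, so the assumption $\eta\theta_\ast\in\Theta$ together with the optimality $L_m^\tau(\widehat\theta_m)\le L_m^\tau(\eta\theta_\ast)$ yields, after expanding and setting $h=\widehat\theta_m-\eta\theta_\ast$, the basic inequality
\[
\|h\|_2^2 \le 2\,\langle \mathbf{b}_m-\eta\theta_\ast,\,h\rangle.
\]
Since $\widehat\theta_m\in\Theta$ forces $h\in D(\Theta,\eta\theta_\ast)$, dividing through by $\|h\|_2$ and rescaling gives $\|h\|_2\le 2\sup_{u\in T}\langle \mathbf{b}_m-\eta\theta_\ast,u\rangle$ with $T:=D(\Theta,\eta\theta_\ast)\cap\mathbb{S}^{d-1}$, which reduces the problem to controlling a linear functional uniformly over $T$.

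Second, I would split $\mathbf{b}_m-\eta\theta_\ast$ into a deterministic bias and a stochastic fluctuation, $\mathbf{b}_m-\eta\theta_\ast=(\mathbf{b}_m-\mathbb{E}\mathbf{b}_m)+(\mathbb{E}\mathbf{b}_m-\eta\theta_\ast)$. The Brillinger--Li identity $\mathbb{E}[q\widetilde U]=\mathbb{E}[y\mathbf{x}]=\eta\theta_\ast$ follows from Corollary \ref{elliptical-corollary} applied conditional on $\langle \mathbf{x},\theta_\ast\rangle$, using the independence of $\delta$ and $\mathbf{x}$; hence the bias collapses to $\mathbb{E}[(\widetilde q-q)\widetilde U]$. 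A one-line Cauchy--Schwarz step using $\mathbb{E}\langle \widetilde U,u\rangle^2=1$ together with the Markov-type estimate $\mathbb{E}[q^2\mathbf{1}_{|q|>\tau}]\le \phi\tau^{-2\kappa}$ obtained from \eqref{eq:determine.kappa} bounds this by $\sqrt\phi\,\tau^{-\kappa}$, and the choice $\tau=m^{1/(2(1+\kappa))}$ is precisely what balances this deterministic error against the variance of the summands treated next.

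The main obstacle is the stochastic supremum
\[
\sup_{u\in T}\Bigl|\,\frac{1}{m}\sum_{i=1}^m\bigl(\widetilde q_i\langle \widetilde U_i,u\rangle-\mathbb{E}[\widetilde q\langle \widetilde U,u\rangle]\bigr)\Bigr|.
\]
Because $\mathbf{x}$ is only elliptical and not Gaussian, sub-Gaussian concentration is unavailable; the benefit of the truncation is the uniform bound $|\widetilde q_i\langle \widetilde U_i,u\rangle|\le \tau\sqrt d$, while the variance remains controlled via the $2(1+\kappa)$-th moment of $q$. These two ingredients furnish mixed sub-Gaussian/sub-exponential Bernstein-type increment bounds, which I would chain using Talagrand's generic chaining machinery as developed in \citep{Talagrand-book-2,Mendelson-2}; the abstract $\gamma_2$-functional produced by chaining is then identified with the Gaussian mean width $\omega(T)$ via the majorizing measure theorem. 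The delicate point is ensuring that the sub-exponential regime arising from the $\tau\sqrt d$ uniform bound does not swamp the sub-Gaussian regime: the sample-size condition $m\ge\beta^2(\omega(T)+1)^2$ is exactly what keeps the Bernstein chaining in its ``Gaussian'' branch and, after combining with Steps 1--2, produces the stated concentration inequality. The $e^{-\beta/2}$ tail (in place of the $e^{-\beta^2/2}$ one expects under Gaussian design) reflects the sub-exponential rather than sub-Gaussian behavior of the truncated process induced by the elliptically symmetric, heavy-tailed measurements.
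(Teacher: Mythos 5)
Your overall architecture is the same as the paper's: the quadratic structure of $L_m^\tau$ plus optimality gives $\|h\|_2\le 2\sup_{u\in T}\langle \mathbf{b}_m-\eta\theta_*,u\rangle$ with $T=D(\Theta,\eta\theta_*)\cap\mathbb{S}^{d-1}$, the Brillinger--Li identity reduces the deterministic part to the truncation bias, and the fluctuation is handled by generic chaining with $\gamma_2(T)\asymp\omega(T)$. However, there are two concrete gaps in how you execute the two quantitative steps.

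First, your bias bound is too weak. Applying Cauchy--Schwarz to separate $\langle\widetilde U,u\rangle$ from $(\widetilde q-q)$ and then using $\mathbb{E}[q^2\mathbf{1}_{|q|>\tau}]\le\phi\tau^{-2\kappa}$ gives $\sqrt{\phi}\,\tau^{-\kappa}=\sqrt{\phi}\,m^{-\kappa/(2(1+\kappa))}$, and since $\kappa/(2(1+\kappa))<1/2$ for every $\kappa>0$ this is asymptotically \emph{larger} than the $C/\sqrt{m}$ needed for the theorem (e.g.\ $m^{-1/4}$ when $\kappa=1$); the sample-size condition only gives $(\omega(T)+1)\beta\le\sqrt m$, so it cannot absorb this loss. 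The correct ordering is to peel off the indicator as a probability:
\[
\mathbb{E}\bigl[|\langle\widetilde U,u\rangle q|\,\mathbf{1}_{\{|q|\ge\tau\}}\bigr]\le\bigl(\mathbb{E}|\langle\widetilde U,u\rangle q|^2\bigr)^{1/2}\,\mathbb{P}(|q|\ge\tau)^{1/2},
\]
then control $\mathbb{E}[\langle\widetilde U,u\rangle^2q^2]$ by H\"older (exponents $\tfrac{1+\kappa}{\kappa}$ and $1+\kappa$) together with the dimension-free sub-Gaussianity of $\langle\widetilde U,u\rangle$ (note $q$ and $\widetilde U$ are dependent, so $\mathbb{E}\langle\widetilde U,u\rangle^2=1$ alone does not suffice), and use $\mathbb{P}(|q|\ge\tau)\le\phi\tau^{-2(1+\kappa)}=\phi/m$. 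This is exactly where the choice $\tau=m^{1/(2(1+\kappa))}$ earns the full $1/\sqrt m$ rate.

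Second, the chaining step is not just ``Bernstein at each link plus a union bound.'' The uniform bound $|\widetilde q_i\langle\widetilde U_i,u\rangle|\le\tau\sqrt d$ you invoke is dimension-dependent and must be replaced by the dimension-free sub-Gaussian norm of $\langle\widetilde U_i,u\rangle$; more importantly, at level $l$ of the chain the union bound over $2^{2^{l+1}}$ pairs forces deviations at scale $u=2^l\beta$, and once $2^l\beta\gtrsim\log(em)$ the sub-exponential term $\tau u/m$ in Bernstein's inequality contributes $2^l$ rather than $2^{l/2}$, so the telescoping sum is no longer controlled by $\gamma_2(T)$. The sample-size condition $m\ge\beta^2(\omega(T)+1)^2$ does not prevent this, since the chain runs over all $l$. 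The paper resolves it by splitting the levels into three regimes and, in the intermediate one, replacing Bernstein by a Montgomery-Smith--type inequality for Rademacher sums combined with bounds on the order statistics of $\{|\langle\widetilde U_i,\pi_l(t)-\pi_{l-1}(t)\rangle|\}$ and of $\{|\widetilde q_i|\}$; some such device is genuinely needed, and your sketch does not identify the difficulty or supply it.
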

\begin{remark}
\begin{enumerate}
\item Unknown link function $f$ enters the bound only through the constant $\eta$ defined in \eqref{eq:eta}. 
\item 
Aside from independence, conditions on the noise $\delta$ are implicit and follow from assumptions on $y$. 
In the special case when the error is additive, that is, when $y=f(\dotp{\mathbf{x}}{\theta_*})+\delta$, the moment condition \eqref{eq:determine.kappa} becomes 
$\mb E \big|\|\mf{x}\|_2 f(\dotp{\mathbf{x}}{\theta_*})+\|\mf{x}\|_2\delta\big|^{2(1+\kappa)}<\infty$, 
for which it is sufficient to assume that  $\mb E\Big|\|\mf{x}\|_2 f(\dotp{\mathbf{x}}{\theta_*})\Big|^{2(1+\kappa)}<\infty$ and 
$\mb E\left|\|\mf{x}\|_2\delta\right|^{2(1+\kappa)}<\infty$. 
\item 
Theorem \ref{master-bound} is mainly useful when $\eta\theta_*$ lies on the boundary of the set $\Theta$. 
Otherwise, if $\eta\theta_*$ belongs to the relative interior of $\Theta$, the descent cone $D(\Theta,\eta\theta_*)$ is the affine hull of 
$\Theta$  (which will often be the whole space $\mathbb{R}^d$). 
Thus, in such cases the Gaussian mean width $\omega(D(\Theta,\eta\theta_*)\cap\mathbb{S}^{d-1})$ can be on the order of $\sqrt{d}$, which is prohibitively large when $d\gg m$. 
We refer the reader to \citep{plan2016generalized,plan2014high} for a discussion of related result and possible ways to tighten them. 
\end{enumerate}
\end{remark}

\noindent 
Next, we present performance guarantees for the unconstrained estimator \eqref{eq:unconstrained-version}.
\begin{theorem}
\label{master-bound-2}
Assume that the norm $\|\cdot\|_{\mathcal{K}}$ dominates the 2-norm, i.e. $\|\mathbf{v}\|_{\mathcal{K}}\geq\|\mathbf{v}\|_{2},~\forall \mathbf{v}\in\mathbb{R}^d$. 
Let $\mathbf{x}\sim\mathcal{E}(0,~\mathbf{I}_{d\times d},~F_{\mu})$, and suppose that for some $\kappa>0$ 
\[
\phi:=\mb E |q|^{2(1+\kappa)}<\infty.
\]
Then there exist constants $C_3=C_3(\kappa,\phi),C_4=C_4(\kappa,\phi)>0$ such that for all 
$\lambda\geq \frac{C_3\beta}{\sqrt{m}}\l(1+\omega(\mathcal{G}) \r)$ 
\begin{align*}
\mb P\left(
\left\|\widehat{\theta}_m^{\lambda}-\eta\theta_*\right\|_2\geq \frac{3}{2}\lambda\cdot\Psi\left(\mb{S}_2\left(\eta\theta_*\right)\right)
\right)\leq C_4e^{-\beta/2},
\end{align*}
for any $\beta \geq 8$ and $m \geq (\omega(\mathcal{G})+1)^2\beta^2$, where 
$\mathcal{G}:=\{\mathbf{x}\in\mathbb{R}^d:~\|\mathbf{x}\|_{\mathcal{K}}\leq1\}$ is the unit ball of $\|\cdot\|_\m K$ norm,
and $\mb{S}_2(\cdot)$ and $\Psi(\cdot)$ are given in Definitions \ref{def:restricted.set} and \ref{def:restricted.compatibility} respectively. 
\end{theorem}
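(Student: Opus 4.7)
The plan is to exploit the fact that the loss $L_m^\tau(\theta)$ is a simple quadratic function of $\theta$, so that the analysis reduces to studying a proximal/shrinkage-type optimality condition together with a concentration bound for a single empirical mean in the dual norm. Set $\mathbf{b} = \frac{1}{m}\sum_{i=1}^m \widetilde q_i \widetilde U_i$, so that
\[
L_m^\tau(\theta) = \|\theta - \mathbf{b}\|_2^2 - \|\mathbf{b}\|_2^2,
\]
and let $\mathbf{v} := \widehat\theta_m^\lambda - \eta\theta_\ast$ and $\mathbf{e} := \mathbf{b} - \eta\theta_\ast$. Since $\mb E q\widetilde U = \mb E y\mf x = \eta\theta_\ast$ (by Lemma \ref{lemma:mean-consistency}), $\mathbf{e}$ decomposes into a centered stochastic fluctuation and a bias term $\mb E \widetilde q\widetilde U - \mb E q\widetilde U$ coming from the truncation.

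Then I would follow the standard ``basic inequality'' path. Optimality of $\widehat\theta_m^\lambda$ gives
\[
\|\eta\theta_\ast + \mathbf{v} - \mathbf{b}\|_2^2 + \lambda\|\eta\theta_\ast + \mathbf{v}\|_{\mathcal K}
\le \|\eta\theta_\ast - \mathbf{b}\|_2^2 + \lambda\|\eta\theta_\ast\|_{\mathcal K},
\]
which after expansion and cancellation yields
\[
\|\mathbf{v}\|_2^2 + \lambda\bigl(\|\eta\theta_\ast + \mathbf{v}\|_{\mathcal K} - \|\eta\theta_\ast\|_{\mathcal K}\bigr)
\le 2\dotp{\mathbf{e}}{\mathbf{v}} \le 2\|\mathbf{e}\|_{\mathcal K^\ast}\|\mathbf{v}\|_{\mathcal K}.
\]
The crucial probabilistic input, which dictates the choice of $\lambda$, is the high-probability estimate
\[
\|\mathbf{e}\|_{\mathcal K^\ast} = \sup_{\mathbf{x}\in\mathcal G}\,\dotp{\mathbf{b} - \eta\theta_\ast}{\mathbf{x}} \;\lesssim\; \frac{(1+\omega(\mathcal G))\beta}{\sqrt m},
\]
which I would establish by the same truncated/generic-chaining argument used for Theorem \ref{master-bound}, only now indexed by $\mathcal G$ rather than by $D(\Theta,\eta\theta_\ast)\cap\mathbb S^{d-1}$; the centered fluctuation is bounded by a chaining inequality for the supremum of a weakly concentrated process, and the truncation bias is controlled in the dual norm using $\mb E|q|^{2(1+\kappa)}<\infty$ together with the assumption $\|\cdot\|_{\mathcal K}\ge \|\cdot\|_2$ (which gives $\mathcal G\subseteq\mathbb B_2^d$, hence controls the dual-norm effect of the bias by its $\ell_2$ size).

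On the event where $\|\mathbf{e}\|_{\mathcal K^\ast} \le \lambda/4$, the basic inequality together with the triangle inequality $\|\eta\theta_\ast+\mathbf{v}\|_{\mathcal K}\ge \|\eta\theta_\ast\|_{\mathcal K}-\|\mathbf{v}\|_{\mathcal K}$ yields both
\[
\|\mathbf{v}\|_2^2 \le \tfrac{3\lambda}{2}\|\mathbf{v}\|_{\mathcal K}
\qquad\text{and}\qquad
\|\eta\theta_\ast+\mathbf{v}\|_{\mathcal K} \le \|\eta\theta_\ast\|_{\mathcal K} + \tfrac{1}{2}\|\mathbf{v}\|_{\mathcal K},
\]
so $\mathbf{v}\in\mathbb S_2(\eta\theta_\ast)$ by Definition \ref{def:restricted.set}. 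Applying Definition \ref{def:restricted.compatibility} gives $\|\mathbf{v}\|_{\mathcal K}\le \Psi(\mathbb S_2(\eta\theta_\ast))\|\mathbf{v}\|_2$, and substituting back produces $\|\mathbf{v}\|_2\le \tfrac{3}{2}\lambda\,\Psi(\mathbb S_2(\eta\theta_\ast))$, as required.

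The genuinely difficult step is the dual-norm concentration estimate for $\mathbf{e}$. Because the underlying random variables $q\widetilde U$ are heavy-tailed and elliptical but not sub-Gaussian, standard union/Bernstein arguments are inadequate; the generic chaining bound from \citet{Mendelson-2}, applied to the truncated process and with the Gaussian complexity of the index set $\mathcal G$ acting as the chaining functional, is what produces the $\omega(\mathcal G)$ factor. Once this high-probability event is in hand (with probability at least $1 - C_4 e^{-\beta/2}$ provided $m \ge (1+\omega(\mathcal G))^2\beta^2$, matching the hypothesis), the remaining steps are the purely deterministic convex-analysis manipulations outlined above, so the truncation-plus-chaining bound is really the only nontrivial piece.
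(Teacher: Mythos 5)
Your proposal is correct and follows essentially the same route as the paper: the basic inequality from optimality of $\widehat\theta_m^\lambda$, a dual-norm bound on the error $\mathbf{b}-\eta\theta_\ast$ split into the centered fluctuation (handled by the generic-chaining concentration lemma applied with index set $\mathcal G$, using $\Delta_d(\mathcal G)\le 1$ from $\|\cdot\|_{\mathcal K}\ge\|\cdot\|_2$) and the truncation bias (handled by the $\ell_2$ bias lemma), followed by the same deterministic steps establishing $\widehat\theta_m^\lambda-\eta\theta_\ast\in\mathbb S_2(\eta\theta_\ast)$ and invoking the restricted compatibility constant. Your completed-square reformulation of $L_m^\tau$ and the event $\|\mathbf{e}\|_{\mathcal K}^\ast\le\lambda/4$ are only cosmetic repackagings of the paper's argument.
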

\noindent
\begin{remark}[Non-isotropic measurements]
It follows from remark \ref{rmk:non-isotropic} and \eqref{non-isotropic1} that, whenever $\mf x\sim  \mathcal{E}_d(0,\mf{\Sigma},F_{\mu})$, 
inequality of Theorem \ref{master-bound} has the form
\begin{align*}
\mb P\left(\left\| \mf\Sigma^{1/2}\l( \widehat{\theta}_m-\eta\theta_* \r) \right\|_2\geq C_1\frac{\l(\omega \l( \mf\Sigma^{1/2} D(\Theta,\eta\theta_*)\cap\mathbb{S}^{d-1}\r)+1 \r) \beta}{\sqrt{m}}\right)\leq C_2e^{-\beta/2},
\end{align*}
which can be further combined with the bound 
\[
\omega \l( \mf\Sigma^{1/2} D(\Theta,\eta\theta_*)\cap\mathbb{S}^{d-1}\r)\leq \| \mf\Sigma^{1/2}\| \cdot \|\mf\Sigma^{-1/2}\| \, \omega \l( D(\Theta,\eta\theta_*)\cap\mathbb{S}^{d-1} \r),
\]
that follows from remark 1.7 in \citep{plan2016generalized}. 
Similarly, the inequality of Theorem \ref{master-bound-2} holds with 
\[
\m G_{\mf\Sigma^{1/2}} := \{\mathbf{x}\in\mathbb{R}^d:~\|\mathbf{x}\|_{ \mf\Sigma^{1/2}\mathcal{K}}\leq1\},
\]
the unit ball of $\|\cdot\|_{\mf\Sigma^{1/2}\m K}$ norm, in place of $\m G$. 
Namely, for all $\lambda\geq \frac{C_3\beta}{\sqrt{m}} \l(1+\omega(\mathcal{G}_{\mf \Sigma^{1/2}}) \r)$, 
\[
\mb P\left(
\left\| \mf \Sigma^{1/2} \l( \widehat{\theta}_m^{\lambda}-\eta\theta_* \r) \right\|_2 
\geq 
\frac{3}{2}\lambda \cdot \Psi \l( \mb{S}_2 \l( \eta \mf \Sigma^{1/2} \theta_*\r); \mf \Sigma^{1/2}\m K \r)
\right)\leq C_4e^{-\beta/2}
\]
Note that $\omega \l(	\mathcal{G}_{\mf \Sigma^{1/2}}\r) \leq \| \mf \Sigma^{1/2} \| \, \omega(\mathcal{G})$. 
Moreover, we show in Appendix \ref{app-B} that for a class of decomposable norms (which includes $\|\cdot\|_1$ and nuclear norm), the upper bounds for $\Psi \l( \mb{S}_2 \l( \eta \mf \Sigma^{1/2} \theta_*\r); \mf \Sigma^{1/2}\m K \r)$ and 
$\Psi \l(\mb S_2(\eta\theta_\ast) \r)$ differ by the factor of $\l\| \mf \Sigma^{-1/2} \r\|$. 
\end{remark}

\subsection{Examples.}

We discuss two popular scenarios: estimation of the sparse vector and estimation of the low-rank matrix. 
\\
\textbf{Estimation of the sparse signal. } Assume that there exists $J\subseteq \l\{1,\ldots,d\r\}$ of cardinality $s\leq d$ such that $\theta_{\ast,j}=0$ for $j\notin J$. 
Let $\Theta = \l\{ \theta\in \mb R^d: \ \|\theta\|_1\leq \| \eta\theta_\ast\|_1 \r\}$, with $\eta$ defined in \eqref{eq:eta}. 
In this case, it is well-known that $\omega^2\l( D(\Theta,\eta\theta_\ast)\cap \mb S^{d-1} \r)\leq 2 s\log(d/s)+\frac{5}{4}s$, see proposition 3.10 in \citep{chandrasekaran2012convex}, hence Theorem \ref{master-bound} implies that, with high probability,  
\begin{align}
\label{eq:bound-m1}
&
\left\| \widehat{\theta}_m-\eta\theta_* \right\|_2\lesssim \sqrt{\frac{s\log(d/s)}{m}}
\end{align}
as long as $m\gtrsim s\log(d/s)$. 
\\
We compare this bound to result of Theorem \ref{master-bound-2} for constrained estimator. 
Let $\|\cdot\|_\m K$ be the $\ell_1$ norm. 
It is well-know that $\omega(\m G)=\mb E\max_{j=1,\ldots, d}|g_j|\leq \sqrt{2\log(2d)}$, where $\mf g\sim \m N(0,\mf I_{d\times d})$. 
Moreover, we show in Appendix \ref{app-B} that $\Psi\left(\mb{S}_2\left(\eta\theta_*\right) \right)\leq 4\sqrt{s}$. 
Hence, for $\lambda \simeq \sqrt{\frac{\log(2d)}{m}}$, Theorem \ref{master-bound-2} implies that 
\[
\left\|\widehat{\theta}_m^{\lambda}-\eta\theta_*\right\|_2\lesssim \sqrt{\frac{s\log(d)}{m}}
\] 
with high probability whenever $m\gtrsim \log(2d)$. 
This bound is only marginally weaker than \eqref{eq:bound-m1} due to the logarithmic factor, however, definition of 
$\widehat{\theta}_m^{\lambda}$ does not require the knowledge of $\l\|  \eta\theta_\ast \r\|_1$, as we have already mentioned before. 
\\
\textbf{Estimation of a low-rank matrix. } Assume that $d=d_1d_2$ with $d_1\leq d_2$, and $\theta_\ast\in \mb R^{d_1\times d_2}$ has rank $r\leq \min(d_1,d_2)$. 
Let $\Theta = \l\{ \theta\in \mb R^{d_1\times d_2}: \ \|\theta\|_\ast\leq \|\eta\theta_\ast \|_\ast \r\}$. 
Then the Gaussian mean width of the intersection of a descent cone with a unit ball is bounded as 
$\omega^2\l( D(\Theta,\eta\theta_\ast)\cap \mb S^{d-1} \r) \leq  3r(d_1+d_2 - r)$, see proposition 3.11 in \citep{chandrasekaran2012convex}, hence 
Theorem \ref{master-bound} yields that, with high probability, 
\[
\left\| \widehat{\theta}_m-\eta\theta_* \right\|_2\lesssim \sqrt{\frac{r(d_1+d_2)}{m}}
\] 
as long as the number of observations satisfies $m\gtrsim r(d_1+d_2)$. 
\\
Finally, we derive the corresponding bound from Theorem \ref{master-bound-2}. 
The Gaussian mean width of the unit ball in the nuclear norm is bounded by $2(\sqrt{d_1}+\sqrt{d_2})$, see proposition 10.3 in \citep{vershynin2015estimation}. 
It follows from results in Appendix \ref{app-B} that $\Psi\left(\mb{S}_2\left(\eta\theta_*\right) \right)\leq 4\sqrt{2r}$. 
Theorem \ref{master-bound-2} now implies that with high probability
\[
\left\| \widehat{\theta}_m-\eta\theta_* \right\|_2\lesssim \sqrt{\frac{r(d_1+d_2)}{m}},
\] 
which matches the bound of Theorem \ref{master-bound}.

\section{Numerical experiments}

In this section, we demonstrate the performance of proposed robust estimator \eqref{eq:unconstrained-version} for one-bit compressed sensing model. 
The model takes the following form:
\begin{equation}\label{one-bit}
y=sign(\dotp{\mathbf{x}}{\theta_*})+\delta,
\end{equation}
where $\delta$ is the additive noise and the parameter $\theta^*$ is assumed to be $s$-sparse. 
This model is highly non-linear because one can only observe the sign of each measurement.

The 1-bit compressed sensing model was previously discussed extensively in a number of works \citep{plan2014high,ai2014one,plan2016generalized}. 
It was shown that when the measurement vectors are either Gaussian or sub-Gaussian, the Lasso estimator recovers the support of $\theta^*$ with high probability. 
Here, we show that under the heavy-tailed elliptically distributed measurements, our estimator numerically outperforms the standard Lasso estimator
\[
\theta_{\mbox{Lasso}}=\argmin_{\theta\in\mathbb{R}^d}~~\|\mathbf{X}\theta-\mathbf{y}\|_2^2+\lambda \| \theta \|_1,
\]
 while taking the form of a simple soft-thresholding as explained in \eqref{ST-estimator}.

In the first numerical experiment, 
data are simulated in the following way: $\mathbf{x}_1,~\mathbf{x}_2,~\cdots,~\mathbf{x}_{128}\in\mathbb{R}^{512}$ are i.i.d. with spherically symmetric distribution $\mathbf{x}_i\stackrel{d}{=}\mu_i U_i, \ i=1,\ldots,n$. 
The random vectors $U_i\in\mathbb{R}^{512}$ are i.i.d. with uniform distribution over the sphere of radius $\sqrt{512}$, and the random variables $\mu_i\in\mathbb{R}$ are also i.i.d., independent of $U_i$ and such that 
\begin{equation}\label{heavy-noise}
\mu_i\stackrel{d}{=}\frac{1}{\sqrt{2c(q)}}(\xi_{i,1}-\xi_{i,2}),
\end{equation}
where $\xi_{i,1}$ and $\xi_{i,2}$,~$i=1,2,\cdots,128$ are i.i.d. with Pareto distribution, meaning that their probability density function is given by
\[
p(t;q) = \frac{q}{(1+t)^{1+q}}I_{\{t>0\}},
\]
$c(q):=\var(\xi)=\frac{q}{(q-1)^2(q-2)}$, and $q=2.1$. 
The true signal $\theta^*$ has sparsity level $s=5$, with index of each non-zero coordinate chosen uniformly at random, and the magnitude having uniform distribution on $[0,1]$. 

Since we can only recover the original signal $\theta^*$ up to scaling, define the relative error for any estimator 
$\hat\theta$ with respect to $\theta^*$ as follows:
\begin{equation}\label{relative-error}
\textrm{Relative~error} = \left|\frac{\hat\theta}{\|\hat\theta\|_2}-\frac{\theta^*}{\|\theta^*\|_2}\right|.
\end{equation}
In each of the following two scenarios, we run the experiment 200 times for both the Lasso estimator and the estimator defined in \eqref{eq:unconstrained-version} with $\|\cdot\|_\m K$ being the $\|\cdot\|_1$ norm. 
We set the truncation level as $\tau = c m^{\frac{1}{2(1+\kappa)}}$, and the values of $c$ and regularization parameter $\lambda$ are obtained via the standard 2-fold cross validation for the relative error \eqref{relative-error}.  
We then plot the histogram of obtained results over 200 runs of the experiment. 

In the first scenario, we set the additive error $\delta_i=0,~i=1,2,\cdots,128$ in the 1-bit model \eqref{one-bit} and plot the histogram in Fig. \ref{fig:Stupendous1}. 
We can see from the plot that the robust estimator \eqref{eq:unconstrained-version} noticeably outperforms the Lasso estimator.

In the second scenario, we set the additive error $\delta_i,~i=1,2,\cdots,128$ to be i.i.d. heavy tailed noise with signal-to-noise ratio (SNR)\footnote{The signal-to-noise ratio (dB) is defined as $\textrm{SNR}:=10\log_{10}(\sigma^2_{\textrm{signal}}/\sigma^2_{\textrm{noise}})$. In our case, since $\langle\mathbf{x}_i,\theta^*\rangle$ can be positive or negative with equal probability, $\sigma^2_{\textrm{signal}}=1$, and thus, $\sigma^2_{\textrm{noise}}=1/10$.} equal to 10dB, so that the noise has the distribution
\[\delta_i\stackrel{d}{=}h_i/\sqrt{10},\]
and $h_i,~i=1,2,\cdots, 128$ are i.i.d. random variables with Pareto distribution, see \eqref{heavy-noise}. 
The results are plotted in Fig. \ref{fig:Stupendous2}. 
The histogram shows that, while performance of the Lasso estimator becomes worse, results of robust estimator \eqref{eq:unconstrained-version} are relatively stable. 

\begin{figure}[htbp]
   \minipage{0.49\textwidth}
   \includegraphics[width=\linewidth]{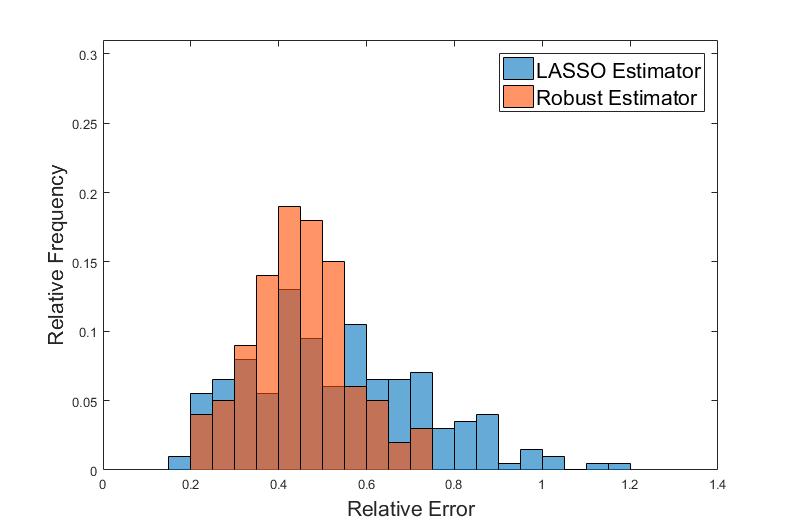} 
   \caption{Lasso vs robust estimator without additive noise. \mbox{                }}
   \label{fig:Stupendous1}
   \endminipage\hfill
 \minipage{0.49\textwidth}
   \includegraphics[width=\linewidth]{200-one-bit-5dB-heavynoise} 
   \caption{Lasso vs robust estimator under heavy-tailed noise with signal-to-noise ratio(SNR) equal to $10dB$.}
   \label{fig:Stupendous2}
   \endminipage
\end{figure}

In the second simulation study, the simulation framework similar to the second scenario above, the only difference being the increased sample size $m$. 
The results are plotted in Fig. \ref{fig:awesome_image1}-\ref{fig:awesome_image3} with sample sizes $m=128,~256$ and 512, respectively. 

\begin{figure}[!htb]
\minipage{0.49\textwidth}
  \includegraphics[width=\linewidth]{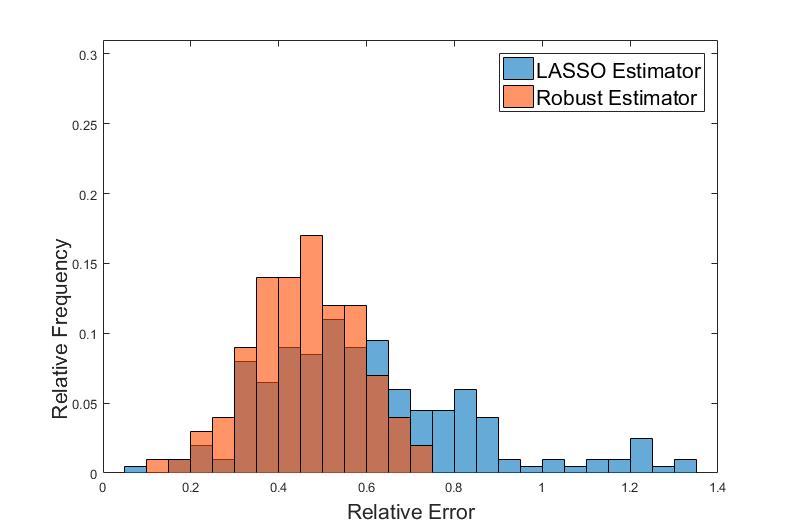}
  \caption{$m=128$}\label{fig:awesome_image1}
\endminipage\hfill
\minipage{0.49\textwidth}
  \includegraphics[width=\linewidth]{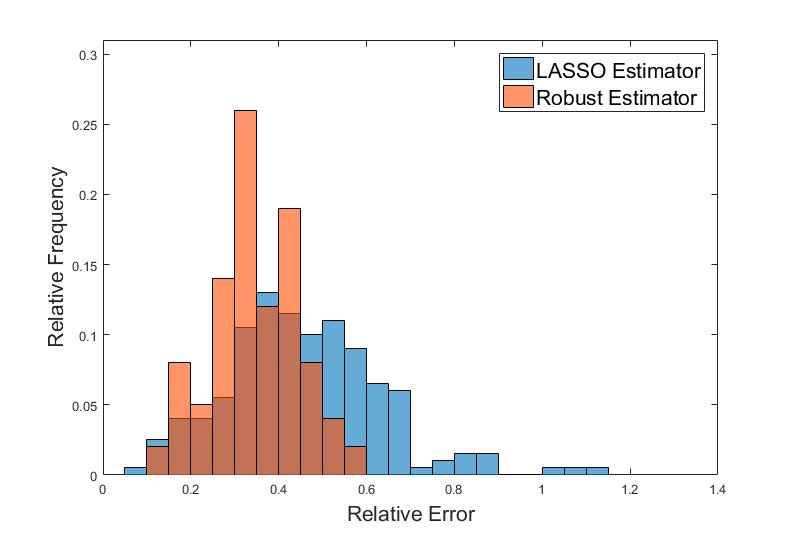}
  \caption{$m=256$}\label{fig:awesome_image2}
\endminipage\hfill
\minipage{0.5\textwidth}%
  \includegraphics[width=\linewidth]{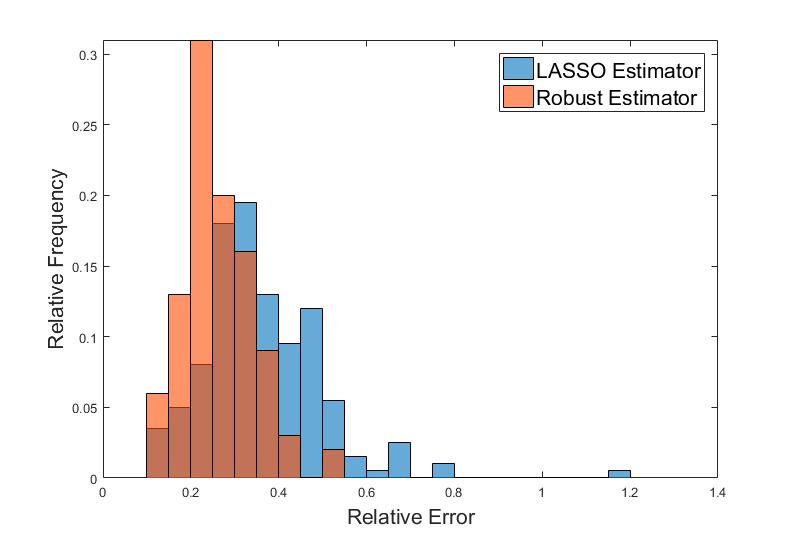}
  \caption{$m=512$}\label{fig:awesome_image3}
\endminipage
\end{figure}

\section{Proofs.}

This  section is devoted to the proofs of Theorems \ref{master-bound} and \ref{master-bound-2}. 

\subsection{Preliminaries.}

We recall several useful facts from probability theory that we rely on in the subsequent analysis. 
\\
The following well-known bound shows that the uniform distribution on a high-dimensional sphere enjoys strong concentration properties.
\begin{lemma}[Lemma 2.2 of \cite{convex-geometry-Ball}]
\label{ball-lemma-0}
Let $U$ have the uniform distribution on $\mathbb{S}^{d-1}$.
Then for any $\Delta\in(0,1)$ and any fixed $\mathbf{v}\in\mathbb{S}^{d-1}$, 
\[ 
\mb P\left(\langle U,\mathbf{v}\rangle\geq\Delta \right) \leq e^{-d\Delta^2/2}.
\]
\end{lemma}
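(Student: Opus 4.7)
By the rotational invariance of the uniform measure on $\mathbb{S}^{d-1}$, I may assume without loss of generality that $\mathbf{v}=e_1$, so the statement reduces to showing $\mathbb{P}(U_1 \ge \Delta) \le e^{-d\Delta^2/2}$. Writing $C_\Delta := \{u\in\mathbb{S}^{d-1}: u_1 \ge \Delta\}$ for the spherical cap, this is equivalent to bounding its normalized surface measure $\sigma(C_\Delta)$.

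The heart of the argument, following Ball's classical volume-comparison idea, is to pass from the cap on the sphere to the cone
\[
\widehat{C}_\Delta := \{ru : r\in[0,1],\ u\in C_\Delta\} \subseteq \mathbb{R}^d
\]
via the polar-coordinates identity
$\mathrm{Vol}(\widehat{C}_\Delta) = \sigma(C_\Delta)\cdot\mathrm{Vol}(B^d)$, where $B^d$ is the Euclidean unit ball. Thus it suffices to give an upper bound on $\mathrm{Vol}(\widehat{C}_\Delta)$, and I would do this by exhibiting a small Euclidean ball containing $\widehat{C}_\Delta$. The natural candidate is $B(\Delta e_1,\sqrt{1-\Delta^2})$: for $u\in C_\Delta$ and $r\in[0,1]$ one has $u_1\ge\Delta$, so
\[
\|ru-\Delta e_1\|_2^2 \;=\; r^2 - 2r\Delta u_1 + \Delta^2 \;\le\; r^2 - 2r\Delta^2 + \Delta^2,
\]
and the right-hand side is a convex quadratic in $r$ whose maximum on $[0,1]$ is bounded by $1-\Delta^2$ in the regime of interest. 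Hence $\widehat{C}_\Delta\subseteq B(\Delta e_1,\sqrt{1-\Delta^2})$, whose volume is $(1-\Delta^2)^{d/2}\mathrm{Vol}(B^d)$. Combining, $\sigma(C_\Delta)\le (1-\Delta^2)^{d/2}\le e^{-d\Delta^2/2}$ by the elementary inequality $1-x\le e^{-x}$.

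The one place that needs care is the containment step when $\Delta$ is close to $1$: for $\Delta > 1/\sqrt{2}$ the origin lies in $\widehat{C}_\Delta$ but falls outside $B(\Delta e_1,\sqrt{1-\Delta^2})$, so the comparison ball must be recentered (or one simply notes that $\sigma(C_\Delta)$ is monotone decreasing in $\Delta$ and reduces to the already-treated range, replacing the target bound $e^{-d\Delta^2/2}$ with $e^{-d/4}$ where needed, possibly paying a harmless multiplicative constant). This endpoint bookkeeping is the main technical wrinkle, but the exponential decay rate $e^{-d\Delta^2/2}$ is governed entirely by the clean comparison $(1-\Delta^2)^{d/2}\le e^{-d\Delta^2/2}$ established above.
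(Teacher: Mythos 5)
The paper does not actually prove this lemma --- it is quoted verbatim from Ball's notes --- so you are being compared against the classical volume-comparison argument, which is indeed the approach you take. Your main computation is correct and complete on the range $\Delta\le 1/\sqrt{2}$: the polar-coordinates identity $\mathrm{Vol}(\widehat{C}_\Delta)=\sigma(C_\Delta)\,\mathrm{Vol}(B^d)$ is right, the quadratic $r^2-2r\Delta^2+\Delta^2$ is maximized on $[0,1]$ at an endpoint with value $\max(\Delta^2,\,1-\Delta^2)$, and when $\Delta^2\le 1/2$ this is $1-\Delta^2$, giving $\sigma(C_\Delta)\le(1-\Delta^2)^{d/2}\le e^{-d\Delta^2/2}$.

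The genuine gap is the range $\Delta\in(1/\sqrt{2},1)$, which you correctly flag but do not close. Your fallback --- monotonicity of $\sigma(C_\Delta)$ plus the bound at $\Delta=1/\sqrt{2}$ --- only yields $\sigma(C_\Delta)\le e^{-d/4}$, which is \emph{weaker} than the claimed $e^{-d\Delta^2/2}$ precisely on that range, and the lemma as stated admits no multiplicative constant (and is used in the paper, via Lemma \ref{lemma:ball}, with the exact constant-free form to get a dimension-free $\psi_2$ bound). The recentering you allude to does work and should be carried out: take the ball of radius $1/(2\Delta)$ centered at $e_1/(2\Delta)$. For $u_1\ge\Delta$ and $r\in[0,1]$,
\[
\Bigl\|ru-\tfrac{1}{2\Delta}e_1\Bigr\|_2^2=r^2-\tfrac{r u_1}{\Delta}+\tfrac{1}{4\Delta^2}\le r^2-r+\tfrac{1}{4\Delta^2}\le\tfrac{1}{4\Delta^2},
\]
so $\widehat{C}_\Delta\subseteq B\bigl(\tfrac{1}{2\Delta}e_1,\tfrac{1}{2\Delta}\bigr)$ and $\sigma(C_\Delta)\le(2\Delta)^{-d}$. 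It remains to check $(2\Delta)^{-1}\le e^{-\Delta^2/2}$, i.e.\ $h(\Delta):=\log(2\Delta)-\Delta^2/2\ge0$; since $h'(\Delta)=1/\Delta-\Delta>0$ on $(0,1)$ and $h(1/\sqrt{2})=\tfrac12\log 2-\tfrac14>0$, this holds for all $\Delta\ge1/\sqrt{2}$, completing the proof. With that one paragraph added, your argument is the standard one and fully correct.
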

\noindent 
Next, we state several useful results from the theory of empirical processes. 
\begin{definition}[$\psi_q$-norm]
For $q \ge 1$, the $\psi_q$-norm of a random variable $\xi\in \mb R$ is given by
\[\|\xi\|_{\psi_q}=\sup_{p\geq1}p^{-\frac{1}{q}}(\expect{|X|^p})^{\frac1p}.\]
Specifically, the cases $q=1$ and $q=2$ are known as the sub-exponential and sub-Gaussian norms respectively.  
We will say that $\xi$ is sub-exponential if $\|\xi\|_{\psi_1}<\infty$, and $X$ is sub-Gaussian if $\|\xi\|_{\psi_2}<\infty$.
\end{definition}
\begin{remark}
\label{norm-justify}
It is easy to check that $\psi_q$-norm is indeed a norm. 
\end{remark}
\begin{remark}
A useful property, equivalent to the previous definition of a sub-Gaussian random variable $\xi$, is that there exists a positive constant $C$ such that
\[
\mb P\l( |\xi|\geq u\r)\leq \exp(1-Cu^2).
\]
For the proof, see Lemma 5.5 in \cite{introduction-to-random-matrix}.
\end{remark}

\begin{definition}[sub-Gaussian random vector]
A random vector $\mathbf{x}\in\mathbb{R}^d$ is called sub-Gaussian if there exists $C>0$ such that 
$\|\langle\mathbf{x},\mathbf{v}\rangle\|_{\psi_2}\leq C$ for any $\mathbf{v}\in\mathbb{S}^{d-1}$. 
The corresponding sub-Gaussian norm is then
\[
\|\mathbf{x}\|_{\psi_2}:=\sup_{\mathbf{v}\in\mathbb{S}^{d-1}}\|\langle\mathbf{x},\mathbf{v}\rangle\|_{\psi_2}.
\]
\end{definition}

Next, we recall the notion of the generic chaining complexity. 
Let $(T,d)$ be a metric space. 
We say a collection $\{\mathcal{A}_l\}_{l=0}^{\infty}$ of subsets of $T$ is increasing when $\mathcal{A}_{l} \subseteq \mathcal{A}_{l+1}$ for all $l \ge 0$.
\begin{definition}[Admissible sequence]
An increasing sequence of subsets  $\{\mathcal{A}_l\}_{l=0}^{\infty}$ of $T$ is admissible if $|\mathcal{A}_l|\leq N_l,~\forall l$, where $N_0=1$ and $N_l=2^{2^l},~\forall l\geq1$.
\end{definition}
For each $\mathcal{A}_l$, define the map $\pi_l:T\rightarrow \mathcal{A}_l$ as 
$\pi_l(t)=\textrm{arg}\min_{s\in\mathcal{A}_l}d(s,t),~\forall t\in T$. 
Note that, since each $\mathcal{A}_l$ is a finite set, the minimum is always achieved. 
When the minimum is achieved for multiple elements in $\mathcal{A}_l$, we break the ties arbitrarily.
The generic chaining complexity $\gamma_2$ is defined as
\bea 
\label{eq:def.gamma2}
\gamma_2(T,d):=\inf\sup_{t\in T}\sum_{l=0}^{\infty}2^{l/2}d(t,\pi_l(t)),
\ena
where the infimum is over all admissible sequences. 
The following theorem tells us that $\gamma_2$-functional controls the ``size'' of a Gaussian process. 

\begin{lemma}[Theorem 2.4.1 of \cite{Talagrand-book-2}]
\label{mmt}
Let $\{G(t), \ t\in T\}$ be a centered Gaussian process indexed by the set $T$, and let 
\[
d(s,t)=\expect{(G(s)-G(t))^2}^{1/2},~\forall s,t\in T.
\]
Then, there exists a universal constant $L$ such that 
\[
\frac1L\gamma_2(T,d)\leq\expect{\sup_{t\in T}G(t)}\leq L\gamma_2(T,d).
\]
\end{lemma}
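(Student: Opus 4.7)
The plan is to prove the two bounds separately, since they rest on completely different ideas. The upper bound is a generic chaining argument that can be written out in roughly a page; the lower bound is Talagrand's celebrated majorizing measure theorem and is substantially deeper.

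For the upper bound, I would fix an admissible sequence $\{\mathcal{A}_l\}_{l \ge 0}$ nearly achieving the infimum in the definition \eqref{eq:def.gamma2} of $\gamma_2(T,d)$, and write $\pi_l$ for the nearest-point map onto $\mathcal{A}_l$. For any $t \in T$, the standard telescoping identity gives $G(t) - G(\pi_0(t)) = \sum_{l \ge 0} \bigl( G(\pi_{l+1}(t)) - G(\pi_l(t)) \bigr)$, where convergence of the series follows from $d(t, \pi_l(t)) \to 0$ as $l \to \infty$ (which one may assume without loss of generality since otherwise $\gamma_2(T,d) = \infty$ and there is nothing to prove). The increment at level $l$ is a centered Gaussian with variance $d(\pi_{l+1}(t), \pi_l(t))^2$, and the number of distinct such increments as $t$ ranges over $T$ is bounded by $|\mathcal{A}_l| \cdot |\mathcal{A}_{l+1}| \le N_{l+1}^2 = 2^{2^{l+1}+1}$. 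A Gaussian tail bound together with a union bound over all levels shows that, on a single event of probability $\ge 1/2$, every chain increment at level $l$ is bounded in absolute value by a constant multiple of $2^{l/2} \cdot d(\pi_{l+1}(t), \pi_l(t))$. Summing over $l$ and taking the supremum over $t$ yields $\mathbb{E} \sup_{t \in T} G(t) \lesssim \gamma_2(T,d)$.

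For the lower bound, I would follow Talagrand's functional-growth approach. Define, for each subset $A \subseteq T$, the quantity $F(A) := \mathbb{E} \sup_{t \in A} G(t)$. The key ingredient is \emph{Sudakov minoration}: if $t_1, \ldots, t_N \in T$ are pairwise $d$-separated at scale $r$, then $\mathbb{E} \max_{i \le N} G(t_i) \gtrsim r \sqrt{\log N}$, which follows from anti-concentration of the maximum of i.i.d. Gaussians. From this one shows that $F$ satisfies a multi-scale growth condition of the form: if a ball of radius $r$ in $T$ contains $N$ sub-balls of radius $r/4$ whose centers are $r/2$-separated, then $F$ on the big ball dominates the average of $F$ on the small balls plus a term of order $r \sqrt{\log N}$. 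One then constructs an admissible sequence recursively by greedy partitioning of $T$ at geometrically decreasing scales, arranging that the $\gamma_2$-cost accumulated at each level is paid for by the growth condition applied to $F(T)$. Stitching the scales together gives $\gamma_2(T,d) \lesssim F(T) = \mathbb{E} \sup_{t \in T} G(t)$.

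The hard part is unquestionably the lower bound. The upper bound is classical chaining and only uses Borell-type Gaussian tail estimates. The lower bound, however, is the full majorizing measure theorem, which required a great deal of effort (first by Fernique, then by Talagrand) to establish; a rigorous self-contained proof requires delicate combinatorial control of the partition tree and a careful induction on scales. In a paper such as this one, I would not attempt to reproduce the lower bound and would instead simply cite \cite{Talagrand-book-2} for it, as the authors do.
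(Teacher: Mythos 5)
The paper gives no proof of this lemma --- it is imported verbatim as Theorem 2.4.1 of \cite{Talagrand-book-2} --- and your proposal correctly recognizes this, ending with the same decision the authors make, namely to cite the reference rather than reprove the majorizing measure theorem. Your sketch of both directions (telescoping chaining with Gaussian tail and union bounds for the upper bound; Sudakov minoration feeding a growth-functional partition scheme for the lower bound) is an accurate outline of the standard argument, the only slip being the cardinality count, which should read $|\mathcal{A}_l|\cdot|\mathcal{A}_{l+1}|\le N_{l+1}^2 = 2^{2^{l+2}}$ rather than $2^{2^{l+1}+1}$; this is harmless for the union bound.
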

Let $(T,d)$ be a semi-metric space, and let $X_1(t),\cdots,X_m(t)$ be independent stochastic processes indexed by $T$ such that $\mb E|X_j(t)|<\infty$ for all $t\in \mb T$ and $1\leq j\leq m$. 
We are interested in bounding the supremum of the empirical process
\begin{equation}\label{empirical}
Z_m(t)=\frac1m\sum_{i=1}^m\l[X_i(t)-\expect{X_i(t)}\r].
\end{equation}
The following well-known symmetrization inequality reduces the problem to bounds on a (conditionally) Rademacher process 
$R_m(t)=\frac 1m\sum_{i=1}^m\varepsilon_i X_i(t),~t\in T$, where $\eps_1,\ldots,\eps_m$ are i.i.d. Rademacher random variables (meaning that they take values $\{-1,+1\}$ with probability $1/2$ each), independent of $X_i$'s.
\begin{lemma}[Symmetrization inequalities]
\label{symmetrization}
\[
\mb E\sup_{t\in T}|Z_m(t)|
\leq 2\mb E\sup_{t\in T}|R_m(t)|,
\]
and for any $u>0$, we have
\[
\mb P\left(\sup_{t\in T}|Z_m(t)|\geq 2\mb E\sup_{t\in T}|Z_m(t)|+u\right)\leq 4\mb P\left(\sup_{t\in T}|R_m(t)|\geq u/2\right).
\]
\end{lemma}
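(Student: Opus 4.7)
The plan is to handle both inequalities through the standard decoupling device: introduce an independent copy $(X_1',\dots,X_m')$ of $(X_1,\dots,X_m)$, let $Z_m'(t)=\frac1m\sum_i(X_i'(t)-\mb E X_i'(t))$, and exploit the fact that the signed random variables $X_i-X_i'$ are symmetric, so $(X_i-X_i')\stackrel{d}{=}\varepsilon_i(X_i-X_i')$ for an independent Rademacher sequence $\{\varepsilon_i\}$.

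\textbf{First inequality.} Since $\mb E Z_m'(t)=0$, Jensen's inequality applied to the (convex) map $(x_1',\dots,x_m')\mapsto\sup_t|Z_m(t)-\frac1m\sum_i(x_i'(t)-\mb EX_i(t))|$ gives
\[
\mb E\sup_{t\in T}|Z_m(t)|\leq \mb E\sup_{t\in T}\Bigl|\frac1m\sum_{i=1}^m(X_i(t)-X_i'(t))\Bigr|.
\]
Inserting independent Rademacher signs (which leaves the joint distribution of the symmetric sequence invariant) and splitting with the triangle inequality yields $\mb E\sup_t|Z_m(t)|\leq 2\mb E\sup_t|R_m(t)|$.

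\textbf{Second inequality.} Let $Y=\sup_t|Z_m(t)|$, $Y'=\sup_t|Z_m'(t)|$, and write $a=\mb E Y=\mb E Y'$. Markov's inequality gives $\mb P(Y'>2a)\leq 1/2$, hence $\mb P(Y'\leq 2a)\geq 1/2$. On the event $\{Y\geq 2a+u,\ Y'\leq 2a\}$ one can select (up to a standard measurable-selection / $\epsilon$-net argument, or by restricting attention to countable subsets since $T$ is separable in the relevant semi-metric) an index $t^\ast$ with $|Z_m(t^\ast)|\geq 2a+u$, and then $|Z_m'(t^\ast)|\leq 2a$ forces $|Z_m(t^\ast)-Z_m'(t^\ast)|\geq u$. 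By independence of $Y$ and $Y'$,
\[
\tfrac12\,\mb P(Y\geq 2a+u)\leq \mb P(Y\geq 2a+u,\,Y'\leq 2a)\leq \mb P\Bigl(\sup_{t}|Z_m(t)-Z_m'(t)|\geq u\Bigr).
\]

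\textbf{Finish via Rademacher symmetrization.} Using the symmetry $(X_i-X_i')\stackrel{d}{=}\varepsilon_i(X_i-X_i')$, followed by the triangle inequality and a union bound,
\[
\mb P\Bigl(\sup_{t}|Z_m(t)-Z_m'(t)|\geq u\Bigr)
\leq \mb P\Bigl(\sup_t|R_m(t)|\geq u/2\Bigr)+\mb P\Bigl(\sup_t|R_m'(t)|\geq u/2\Bigr)
=2\,\mb P\Bigl(\sup_t|R_m(t)|\geq u/2\Bigr),
\]
where $R_m'$ is the Rademacher process built from the independent copy. Combining this with the previous display yields the claim with the constant $4$.

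\textbf{Main obstacle.} The only non-routine point is the measurability/selection step needed to pass from $\{Y\geq 2a+u\}$ to the existence of a usable index $t^\ast$; this is standard provided the processes have sufficiently regular sample paths (e.g.\ separable in the semi-metric $d$), which is exactly the setting in which Lemma \ref{symmetrization} will be applied, so no additional assumption is required beyond those implicit in the setup of \eqref{empirical}.
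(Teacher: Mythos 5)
Your proof is correct: the paper does not prove this lemma at all but simply cites Lemmas 6.3 and 6.5 of Ledoux--Talagrand, and your argument is precisely the standard proof of those cited results (Jensen plus desymmetrization for the expectation bound; the Gin\'e--Zinn device of an independent copy, Markov's inequality, and a union bound for the tail bound, with the constants $\tfrac12\cdot 2 = \tfrac14$ accounting for the factor $4$). The measurability point you flag is indeed harmless here, since one only needs the event inclusion $\{Y\geq 2a+u\}\cap\{Y'\leq 2a\}\subseteq\{\sup_t|Z_m(t)-Z_m'(t)|\geq u\}$, which follows from an $\epsilon$-approximate maximizer without any measurable selection.
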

\begin{proof}
See Lemmas 6.3 and 6.5 in \citep{Talagrand-book}
\end{proof}
Finally, we recall Bernstein's concentration inequality. 
\begin{lemma}[Bernstein's inequality]
\label{Bernstein}
Let $X_1,\cdots,X_m$ be a sequence of independent centered random variables. 
Assume that there exist positive constants $\sigma$ and $D$ such that for all integers $p\geq 2$
\[
\frac1m\sum_{i=1}^m\expect{|X_i|^p}\leq\frac{p!}{2}\sigma^2D^{p-2},
\]
then
\[
\mb P\left(\left|\frac1m\sum_{i=1}^m X_i\right|\geq\frac{\sigma}{\sqrt{m}}\sqrt{2u}+\frac{D}{m}u\right)
\leq2\exp(-u).
\]
In particular, if $X_1,\cdots,X_m$ are all sub-exponential random variables, then $\sigma$ and $D$ can be chosen as 
$\sigma=\frac{1}{m}\sum_{i=1}^m\|X_i\|_{\psi_1}$ and $D=\max\limits_{i=1\ldots m}\|X_i\|_{\psi_1}$. 
\end{lemma}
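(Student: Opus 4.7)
The plan is the classical Cram\'er--Chernoff approach, tuned to the sub-exponential moment growth hypothesized in the lemma. First I would apply Markov's inequality to the exponential moment of $\sum_i X_i$: for any $\lambda\in(0,1/D)$, independence factorizes the MGF and gives
\[
\mb P\l( \sum_{i=1}^m X_i\geq mt\r)\leq e^{-\lambda m t}\prod_{i=1}^m \mb E e^{\lambda X_i}.
\]

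Next, I would control each MGF. Since the $X_i$ are centered, Taylor expansion together with $X_i^p\leq |X_i|^p$ yields
$\mb E e^{\lambda X_i}\leq 1+\sum_{p\geq 2}\frac{\lambda^p}{p!}\mb E|X_i|^p$. Applying $\log(1+x)\leq x$, summing over $i$, and substituting the moment hypothesis term by term, the resulting geometric series sums (for $\lambda D<1$) to
\[
\sum_{i=1}^m \log \mb E e^{\lambda X_i}\leq \sum_{p\geq 2} \lambda^p\, \frac{m\sigma^2}{2}D^{p-2} = \frac{m\sigma^2\lambda^2}{2(1-\lambda D)},
\]
which, combined with the first step, gives
\[
\mb P\l( \tfrac 1m\sum_i X_i\geq t\r)\leq \exp\l(-\lambda m t+\tfrac{m\sigma^2\lambda^2}{2(1-\lambda D)}\r).
\]

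Finally, I would optimize over $\lambda$. For the specific threshold $t=\frac{\sigma}{\sqrt{m}}\sqrt{2u}+\frac{D}{m}u$, the choice $\lambda=\frac{\sqrt{2u/m}}{\sigma+D\sqrt{2u/m}}$ lies in the admissible range $(0,1/D)$ and reduces the exponent to $-u$ after a short algebraic calculation. Applying the same argument to $-X_i$ and union-bounding supplies the factor $2$ and the two-sided inequality. For the sub-exponential specialization, the definition of the $\psi_1$-norm gives $\mb E|X_i|^p\leq p!\,\|X_i\|_{\psi_1}^p$, so setting $D=\max_i\|X_i\|_{\psi_1}$ and using $\|X_i\|_{\psi_1}^p\leq \|X_i\|_{\psi_1}^2 D^{p-2}$ verifies the moment growth hypothesis with $\sigma$ proportional to $\frac 1m\sum_i \|X_i\|_{\psi_1}$ (up to harmless absolute constants). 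The only delicate point is the choice of $\lambda$ realizing the clean $e^{-u}$ form while staying in $(0,1/D)$ uniformly in $u>0$; every other step is an elementary manipulation.
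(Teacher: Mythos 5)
The paper states this Bernstein inequality without proof (it is a classical result), so there is no in-paper argument to compare against; your Chernoff--MGF proof is the standard one and it is correct. The key algebraic step checks out: writing $s=\sqrt{2u/m}$, the threshold is $t=\sigma s+Ds^2/2$, and your choice $\lambda=\frac{s}{\sigma+Ds}$ gives $1-\lambda D=\frac{\sigma}{\sigma+Ds}$ and exponent $-\lambda mt+\frac{m\sigma^2\lambda^2}{2(1-\lambda D)}=\frac{ms^2}{\sigma+Ds}\left(-\sigma-\frac{Ds}{2}+\frac{\sigma}{2}\right)=-\frac{ms^2}{2}=-u$, exactly as needed, and $\lambda D<1$ automatically. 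Two minor points. First, the term-by-term interchange in $\mb E e^{\lambda X_i}\leq 1+\sum_{p\geq2}\frac{\lambda^p}{p!}\mb E|X_i|^p$ should be justified by noting that the moment hypothesis makes $\mb E e^{\lambda|X_i|}$ finite for $\lambda D<1$, so monotone convergence applies. Second, in the sub-exponential specialization your inequality $\mb E|X_i|^p\leq p!\,\|X_i\|_{\psi_1}^p$ is not quite what the paper's definition $\|\xi\|_{\psi_1}=\sup_{p\geq1}p^{-1}(\mb E|\xi|^p)^{1/p}$ yields; it gives $\mb E|X_i|^p\leq p^p\|X_i\|_{\psi_1}^p\leq p!\,(e\|X_i\|_{\psi_1})^p$, and moreover the natural variance proxy coming out of this computation is $\frac1m\sum_i\|X_i\|_{\psi_1}^2$ (or $\sigma D$) rather than $\left(\frac1m\sum_i\|X_i\|_{\psi_1}\right)^2$. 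Both discrepancies are absorbed by absolute constants (and the paper's own ``in particular'' clause is equally loose; in all of its applications the $X_i$ are identically distributed, so the two proxies coincide), and you flag this explicitly, so the argument stands.
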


\subsection{Roadmap of the proof of Theorem \ref{master-bound}.}

We outline the main steps in the proof of Theorem \ref{master-bound}, and postpone some technical details to sections \ref{section:consistency} and \ref{section:chaining}.\\
As it will be shown below in Lemma \ref{lemma:mean-consistency}, 
$\argmin\limits_{\theta\in \Theta}L^0(\theta) = \eta \theta_\ast$ for $\eta = \mb E\l( \dotp{y\mathbf{x}}{\theta_\ast}\r)$ and $L^0(\widehat \theta_m) - L^0(\eta\theta_\ast) = \|\widehat\theta_m - \eta\theta_\ast\|_2^2$, hence 
\begin{align}
\|\widehat\theta_m - \eta\theta_\ast\|_2^2 & =
L^\tau(\widehat \theta_m) - L^\tau(\eta\theta_\ast) + \l( L^0(\widehat\theta_m) - L^\tau(\widehat\theta_m) - L^0(\eta\theta_\ast) + L^\tau(\eta\theta_\ast)  \r) \nonumber\\
&
=L^\tau(\widehat\theta_m) - L^\tau(\eta\theta_\ast) +(L_m^\tau(\widehat\theta_m) - L_m^\tau(\eta\theta_\ast)) 
\nonumber\\
& 
\quad - (L_m^\tau(\widehat\theta_m) - L_m^\tau(\eta\theta_\ast)) - 2\mb E_m \dotp{y\mathbf{x} - \widetilde q \widetilde U}{\widehat\theta_m - \eta\theta_\ast}, \label{split-bound}
\end{align}
where $\mb E_m(\cdot)$ stands for the conditional expectation given $(\mathbf{x}_i,y_i)_{i=1}^m$, and where we used the equality 
$L^0(\widehat\theta_m) - L^\tau(\widehat\theta_m) - L^0(\eta\theta_\ast) + L^\tau(\eta\theta_\ast)  = 
-2 \mb E_m\left( \dotp{y\mathbf{x} - \widetilde q \widetilde U}{\widehat\theta_m - \eta\theta_\ast}\right)
$ in the last step. 
Since $\widehat\theta_m$ minimizes $L_m^\tau$, $L_m^\tau(\widehat\theta_m) - L_m^\tau(\eta\theta_\ast)\leq 0$, and
\begin{align*}
\|\widehat\theta_m - \eta\theta_\ast\|_2^2 \leq & \,
\frac{2}{m}\sum_{i=1}^m \l( \dotp{ \widetilde q_i \widetilde U_i}{\widehat\theta_m - \eta\theta_\ast}    - \mb E_m \l(\dotp{\widetilde q \widetilde U}{\widehat\theta_m - \eta\theta_\ast}\r)\r)  \\
& 
-2\mb E_m\left(  \dotp{y\mathbf{x} - \widetilde q \widetilde U}{\widehat\theta_m - \eta\theta_\ast} \right).
\end{align*}
Note that $\widehat\theta_m - \eta\theta_\ast \in D(\Theta,\eta\theta_\ast)$; 
dividing both sides of the inequality by $\|\widehat\theta_m - \eta\theta_\ast\|_2$, we obtain
\begin{align}
\label{eq:main}
&
\|\widehat\theta_m - \eta\theta_\ast\|_2 \leq 
\sup_{\mathbf{v}\in D(\Theta,\eta\theta_\ast)\cap \mb S^{d-1}}\l| \frac{2}{m}\sum_{i=1}^m \dotp{\widetilde q_i\widetilde U_i}{\mathbf{v}} - 
\mb E  \dotp{\widetilde q\widetilde U}{\mathbf{v}}\r| +
2\sup_{\mathbf{v}\in \mb S^{d-1}}\mb E \dotp{y\mathbf{x} - \widetilde q \widetilde U}{\mathbf{v}}.
\end{align}
To get the desired bound, it remains to estimate two terms above.  
The bound for the first term is implied by Lemma \ref{concentration-multiplier}: setting $T=D(\Theta,\eta\theta_\ast)\cap \mb S^{d-1}$, and observing that the diameter $\Delta_d(T) := \sup_{t\in T}\|t\|_2=1$, we get that with probability $\geq 1 - ce^{-\beta/2}$,
\[
\sup_{\mathbf{v}\in D(\Theta,\eta\theta_\ast)\cap \mb S^{d-1}}
\l| \frac{2}{m}\sum_{i=1}^m \dotp{\widetilde q_i\widetilde U_i}{\mathbf{v}} - 
\mb E  \dotp{\widetilde q\widetilde U}{\mathbf{v}}\r| 
\leq 
C\frac{(\omega(T)+1)\beta}{\sqrt{m}}.
\]
To estimate the second term, we apply Lemma \ref{lemma:truncation-bias}: 
\[
2\sup_{\mathbf{v}\in \mb S^{d-1}}\mb E  \dotp{y\mathbf{x} - \widetilde q \widetilde U}{\mathbf{v}} \
\leq
\frac{\tilde C}{\sqrt m}.
\]
Result of Theorem \ref{master-bound} now follows from the combination of these bounds.
\qed 


\subsection{Roadmap of the proof of Theorem \ref{master-bound-2}.}

Once again, we will present the main steps while skipping the technical parts. 
Lemma \ref{lemma:mean-consistency} implies that 
$\argmin\limits_{\theta\in \Theta}L^0(\theta) = \eta \theta_\ast$ for 
$\eta = \mb E \dotp{y\mathbf{x}}{\theta_\ast}$ and 
\[
L^0(\widehat \theta_m^\lambda) - L^0(\eta\theta_\ast) = \|\widehat\theta_m^\lambda - \eta\theta_\ast\|_2^2.
\] 
Thus, arguing as in \eqref{split-bound},
\begin{align*}
\|\widehat\theta_m^\lambda - \eta\theta_\ast\|_2^2 & =
L^\tau(\widehat\theta_m^\lambda) - L^\tau(\eta\theta_\ast) +(L_m^\tau(\widehat\theta_m^\lambda) - L_m^\tau(\eta\theta_\ast)) \\
& 
\quad - (L_m^\tau(\widehat\theta_m^\lambda) - L_m^\tau(\eta\theta_\ast)) - 2\mb E_m  \dotp{y\mathbf{x} - \widetilde q \widetilde U}{\widehat\theta_m^\lambda - \eta\theta_\ast}.
\end{align*}
Since $\widehat{\theta}^{\lambda}_m$ is a solution of problem \eqref{eq:unconstrained-version}, it follows that
\begin{align*}
L_m^{\tau}(\theta_m^{\lambda})+\lambda\l\| \theta_m^{\lambda} \r\|_{\mathcal{K}}\leq
L_m^{\tau}\left(\eta\theta_*\right)+\lambda\l \|\eta\theta_* \r\|_{\mathcal{K}},
\end{align*}
which further implies that
\begin{align}
\|\widehat{\theta}_m^\lambda-\eta\theta_*\|_2^2 \leq
&
\frac{2}{m}\sum_{i=1}^m \l( \dotp{\widetilde q_i\widetilde U_i}{\widehat\theta_m^\lambda - \eta\theta_\ast}  - 
\mb E_m \dotp{\widetilde q\widetilde U}{\widehat\theta_m^\lambda - \eta\theta_\ast} \r) 
- 2\mb E_m  \dotp{y\mathbf{x} - \widetilde q \widetilde U}{\widehat\theta_m^\lambda - \eta\theta_\ast}  
\nonumber\\
&
+\lambda\left(\|\eta\theta_*\|_{\mathcal{K}}-\|\widehat{\theta}_m^\lambda\|_{\mathcal{K}}\right)\nonumber\\
=&
  \dotp{\frac{2}{m}\sum_{i=1}^m\widetilde q_i\widetilde U_i -\expect{\widetilde q\widetilde U}}{\widehat\theta_m^\lambda - \eta\theta_\ast} - 2\mb E_m \dotp{y\mathbf{x} - \widetilde q \widetilde U}{\widehat\theta_m^\lambda - \eta\theta_\ast}  
 \nonumber\\
&
+\lambda\left(\|\eta\theta_*\|_{\mathcal{K}}-\|\widehat{\theta}_m^\lambda\|_{\mathcal{K}}\right).
\label{sth-1}
\end{align}
Letting $\|\cdot\|_{\mathcal{K}}^*$ be the dual norm of $\|\cdot\|_{\mathcal{K}}$ 
(meaning that $\|\mf{x}\|_{\m K}^\ast = \sup\l\{ \dotp{\mf{x}}{\mf{z}}, \ \|\mf{z}\|_\m K\leq 1 \r\}$),
the first term in \eqref{sth-1} can be estimated as 
\begin{align}\label{sth-2}
\dotp{\frac{1}{m}\sum_{i=1}^m\widetilde q_i\widetilde U_i -\expect{\widetilde q\widetilde U}}{\widehat\theta_m^\lambda - \eta\theta_\ast}
\leq\left\|\frac{1}{m}\sum_{i=1}^m\widetilde q_i\widetilde U_i -\expect{\widetilde q\widetilde U}\right\|_{\mathcal{K}}^*\cdot\|\widehat{\theta}_m^\lambda-\eta\theta_*\|_{\mathcal{K}}.
\end{align}
Since 
\[
\left\|\frac{1}{m}\sum_{i=1}^m\widetilde q_i\widetilde U_i -\expect{\widetilde q\widetilde U}\right\|_{\mathcal{K}}^*=\sup_{\|t\|_{\mathcal{K}}\leq1}\dotp{\frac{1}{m}\sum_{i=1}^m\widetilde q_i\widetilde U_i -\expect{\widetilde q\widetilde U}}{t},
\]
lemma \ref{concentration-multiplier} applies 
with $T=\mathcal{G}:=\{\mathbf{x}\in\mathbb{R}^d:~\|\mathbf{x}\|_{\mathcal{K}}\leq1\}$. 
Together with an observation that
$\Delta_d(T)\leq\sup_{t\in T}\|t\|_{\mathcal{K}}=1$ (due to the assumption $\|\mathbf{v}\|_2\leq\|\mathbf{v}\|_{\mathcal{K}}, \ \forall \mathbf{v}\in\mathbb{R}^d$), this yiels
\[
\mb P\left( 
\sup_{\|t\|_{\mathcal{K}}\leq1}\left|\dotp{\frac{1}{m}\sum_{i=1}^m\widetilde q_i\widetilde U_i -\expect{\widetilde q\widetilde U}}{t}\right|
\geq C'\frac{\left(\omega(\mathcal{G})+1\right)\beta}{\sqrt{m}} \right)
\leq c'e^{-\beta/2},
\]
for any $\beta\geq8$ and some constants $C', c>0$. For the second term in \eqref{sth-1}, we use Lemma \ref{lemma:truncation-bias} to obtain
\[
2\mb E_m \dotp{y\mathbf{x} - \widetilde q \widetilde U}{\widehat\theta_m^\lambda - \eta\theta_\ast} 
\leq\frac{C''}{\sqrt{m}}\|\widehat\theta_m^\lambda-\eta\theta_*\|_2
\leq\frac{C''}{\sqrt{m}}\|\widehat\theta_m^\lambda-\eta\theta_*\|_{\mathcal{K}},\]
for some constant $C''>0$, where we have again applied the inequality $\|\mathbf{v}\|_2\leq\|\mathbf{v}\|_{\mathcal{K}}$.
Combining the above two estimates gives that with probability at least $1-ce^{-\beta/2}$,
\begin{align}
\label{sth-3}
\|\widehat{\theta}_m^\lambda-\eta\theta_*\|_2^2
\leq C\frac{\left(\omega(\mathcal{G})+1\right)\beta}{\sqrt{m}}\|\widehat{\theta}_m^\lambda-\eta\theta_*\|_{\mathcal{K}}+\lambda\left(\|\eta\theta_*\|_{\mathcal{K}}-\|\widehat{\theta}_m^\lambda\|_{\mathcal{K}}\right),
\end{align}
for some constant $C>0$ and any $\beta\geq8$. 
Since $\lambda\geq 2C \left(\omega(\mathcal{G})+1\right)\beta/\sqrt{m}$ by assumption, and the right hand side of \eqref{sth-3} is nonnegative, it follows that
\[
\frac12\|\widehat{\theta}_m^\lambda-\eta\theta_*\|_{\mathcal{K}}+\|\eta\theta_*\|_{\mathcal{K}}-\|\widehat{\theta}_m^\lambda\|_{\mathcal{K}}\geq 0.
\]
This inequality implies that $\widehat{\theta}_m^\lambda-\eta\theta_*\in\mb{S}_2(\eta\theta_*)$. 
Finally, from \eqref{sth-3} and the triangle inequality,
\begin{align*}
\|\widehat{\theta}_m^\lambda-\eta\theta_*\|_2^2
\leq \frac32\lambda\|\widehat{\theta}_m^\lambda-\eta\theta_*\|_{\mathcal{K}}.
\end{align*}
Dividing both sides by $\|\widehat{\theta}_m^\lambda-\eta\theta_*\|_2$ gives
\begin{align*}
\|\widehat{\theta}_m^\lambda-\eta\theta_*\|_2
\leq \frac32\lambda\frac{\|\widehat{\theta}_m^\lambda-\eta\theta_*\|_{\mathcal{K}}}{\|\widehat{\theta}_m^\lambda-\eta\theta_*\|_2}
\leq\frac32\lambda\cdot\Psi\left(\mb{S}_2(\eta\theta_*)\right).
\end{align*}
This finishes the proof of Theorem \ref{master-bound-2}.

\subsection{Bias of the truncated mean.}
\label{section:consistency}

The following lemma is motivated by and is similar to Theorem 2.1 in \citep{li1989regression}.
\begin{lemma}
\label{lemma:mean-consistency}
Let $\eta=\mb E \langle y\mathbf{x},\theta_\ast\rangle$. 
Then 
\[
\eta\theta_\ast = \argmin_{\theta\in \Theta} L^0(\theta),
\]
and for any $\theta\in \Theta$,
\[
L^0(\theta) - L^0(\eta\theta_\ast) = \|\theta - \eta\theta_\ast\|_2^2.
\]
\end{lemma}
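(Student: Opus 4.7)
The plan is to first establish the key identity $\mathbb{E}[y\mathbf{x}] = \eta\theta_\ast$, which is the elliptical analogue of Brillinger's observation for Gaussian design. Once this is in hand, both conclusions of the lemma are routine algebra: since $\mathbf{x}$ is isotropic,
\[
L^0(\theta) = \|\theta\|_2^2 - 2\langle \mathbb{E}[y\mathbf{x}], \theta\rangle = \|\theta - \eta\theta_\ast\|_2^2 - \|\eta\theta_\ast\|_2^2,
\]
so $L^0$ is minimized on all of $\mathbb{R}^d$ at $\theta = \eta\theta_\ast$, and since $\eta\theta_\ast \in \Theta$ by the standing assumption preceding Theorem \ref{master-bound}, this is also the minimizer over $\Theta$. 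The difference identity $L^0(\theta) - L^0(\eta\theta_\ast) = \|\theta - \eta\theta_\ast\|_2^2$ is then immediate from the same expansion.

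The main (and essentially only) work is therefore to verify $\mathbb{E}[y\mathbf{x}] = \eta\theta_\ast$. I plan to test $\mathbb{E}[y\mathbf{x}]$ against an arbitrary direction $\mathbf{v}\in\mathbb{S}^{d-1}$ and use conditioning. Recall from \eqref{model} that $y = f(\langle\mathbf{x},\theta_\ast\rangle,\delta)$ with $\delta$ independent of $\mathbf{x}$. Conditioning on the pair $(\langle\mathbf{x},\theta_\ast\rangle,\delta)$ and using the independence of $\delta$ from $\mathbf{x}$ (so that conditioning on $\delta$ does not affect the conditional law of $\langle\mathbf{x},\mathbf{v}\rangle$ given $\langle\mathbf{x},\theta_\ast\rangle$), the tower property yields
\[
\mathbb{E}\bigl[y\,\langle\mathbf{x},\mathbf{v}\rangle\bigr] = \mathbb{E}\Bigl[y\cdot \mathbb{E}\bigl[\langle\mathbf{x},\mathbf{v}\rangle \,\big|\, \langle\mathbf{x},\theta_\ast\rangle\bigr]\Bigr].
\]
Now I invoke Corollary \ref{elliptical-corollary} with $\mathbf{y}_1 = \mathbf{v}$ and $\mathbf{y}_2 = \theta_\ast$ (admissible because in the isotropic setting the normalization $\|\mathbf{\Sigma}^{1/2}\theta_\ast\|_2 = 1$ reduces to $\|\theta_\ast\|_2 = 1$) to conclude $\mathbb{E}\bigl[\langle\mathbf{x},\mathbf{v}\rangle \,\big|\, \langle\mathbf{x},\theta_\ast\rangle\bigr] = \langle\mathbf{v},\theta_\ast\rangle\,\langle\mathbf{x},\theta_\ast\rangle$. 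Substituting back gives $\mathbb{E}[y\langle\mathbf{x},\mathbf{v}\rangle] = \langle\mathbf{v},\theta_\ast\rangle\cdot\eta = \langle\eta\theta_\ast,\mathbf{v}\rangle$, and since $\mathbf{v}$ was arbitrary, $\mathbb{E}[y\mathbf{x}] = \eta\theta_\ast$.

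I do not anticipate any genuine obstacle: Corollary \ref{elliptical-corollary} performs the heavy lifting, and what remains is a linear-algebraic rearrangement together with the observation that $\eta\theta_\ast$ minimizes a strictly convex quadratic. The only mild subtlety is being careful that the conditioning direction $\theta_\ast$ is a unit vector when applying the corollary, and that the independence of $\delta$ from $\mathbf{x}$ is used correctly to reduce the conditional expectation from being given $(\langle\mathbf{x},\theta_\ast\rangle,\delta)$ to being given $\langle\mathbf{x},\theta_\ast\rangle$ alone; both are ensured by the standing conventions in Section 3 and the model assumptions at the start of the paper.
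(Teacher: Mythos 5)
Your proposal is correct and follows essentially the same route as the paper: condition on $(\langle\mathbf{x},\theta_\ast\rangle,\delta)$, use independence of $\delta$ from $\mathbf{x}$, invoke Corollary \ref{elliptical-corollary} to get $\mb E\dotp{y\mathbf{x}}{\theta}=\eta\langle\theta_*,\theta\rangle$, and finish by completing the square in $L^0$. Testing $\mb E[y\mathbf{x}]$ against an arbitrary direction $\mathbf{v}$ is just a rephrasing of the paper's computation of $\mb E\dotp{y\mathbf{x}}{\theta}$ for arbitrary $\theta$, and your attention to the unit-norm requirement on $\theta_\ast$ matches the paper's standing normalization.
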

\begin{proof}
Since $y=f(\dotp{\mathbf{x}}{\theta_*},\delta)$, we have that for any $\theta\in\mathbb{R}^d$ 
\begin{align*}
\mb E\dotp{y\mathbf{x}}{\theta}
=&\mb E \langle\mathbf{x},\theta\rangle f(\langle\mathbf{x},\theta_*\rangle,\delta)\\
=&\mb E
\expect{\langle\mathbf{x},\theta\rangle f(\langle\mathbf{x},\theta_*\rangle,\delta)
~|~\langle\mathbf{x},\theta_*\rangle,\delta}\\
=&\mb E
\mb E \l(\langle\mathbf{x},\theta\rangle 
~|~\langle\mathbf{x},\theta_*\rangle \r) \cdot f(\langle\mathbf{x},\theta_*\rangle,\delta)\\
=&\mb E \Big( \langle \theta_*,\theta\rangle\langle\mathbf{x},\theta_*\rangle
 f(\langle\mathbf{x},\theta_*\rangle,\delta) \Big) \\
 =&\eta\langle\theta_*,\theta\rangle,
\end{align*}
where the third equality follows from the fact that the noise $\delta$ is independent of the measurement vector $\mathbf{x}$, the second to last equality from the properties of elliptically symmetric distributions (Corollary \ref{elliptical-corollary}), 
and the last equality from the definition of $\eta$. 
Thus,
\begin{align*}
L^0(\theta)=&\|\theta\|_2^2-2\expect{\dotp{y\mathbf{x}}{\theta}} =\|\theta\|_2^2-2\eta\langle\theta_*,\theta\rangle
=\|\theta-\eta\theta_*\|_2^2-\|\eta\theta_*\|_2^2,
\end{align*}
which is minimized at $\theta=\eta\theta^*$. Furthermore, $L^0(\eta\theta^*)=-\|\eta\theta_*\|_2^2$, hence
\[
L^0(\theta) - L^0(\eta\theta_\ast) = \|\theta - \eta\theta_\ast\|_2^2,
\]
finishing the proof.
\end{proof}

Next, we estimate the ``bias term'' $\sup_{\mathbf{v}\in \mb S^{d-1}}\mb E \dotp{y\mathbf{x} - \widetilde q \widetilde U}{\mathbf{v}}$ in inequality \eqref{eq:main}. 
In order to do so, we need the following preliminary result.
\begin{lemma}
\label{lemma:ball}
If $\mathbf{x}\sim\mathcal{E}(0,~\mathbf{I}_{d\times d},~F_{\mu})$, 
then the unit random vector $\mathbf{x}/\|\mathbf{x}\|_2$ is uniformly distributed over the unit sphere $\mathbb{S}^{d-1}$. Furthermore, $\widetilde{U}=\sqrt{d}\mathbf{x}/\|\mathbf{x}\|_2$ is a sub-Gaussian random vector with sub-Gaussian norm $\|\widetilde{U}\|_{\psi_2}$ independent of the dimension $d$.
\end{lemma}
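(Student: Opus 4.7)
The plan is to handle the two claims separately, with both following quite directly from the representation \eqref{elliptical-definition} of elliptical vectors and the concentration bound in Lemma \ref{ball-lemma-0}.

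For the first claim, I would invoke the defining representation $\mathbf{x} \stackrel{d}{=} \mu \mathbf{B} U$, where $\mathbf{B} \mathbf{B}^T = \mathbf{I}_{d\times d}$ (since the covariance is the identity), and $U$ is uniform on $\mathbb{S}^{d-1}$, independent of $\mu$. Since $\mathbf{B}$ is orthogonal, $\|\mathbf{x}\|_2 = |\mu| \cdot \|\mathbf{B} U\|_2 = |\mu|$ almost surely, and therefore
\[
\frac{\mathbf{x}}{\|\mathbf{x}\|_2} \stackrel{d}{=} \operatorname{sign}(\mu)\, \mathbf{B} U
\]
on the event $\{\mu \ne 0\}$. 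By orthogonal invariance of the uniform distribution on the sphere, $\mathbf{B} U \stackrel{d}{=} U$, and conditional on any value of $\mu \ne 0$, $\operatorname{sign}(\mu) U$ is still uniform on $\mathbb{S}^{d-1}$ (using $-U \stackrel{d}{=} U$). Averaging over $\mu$ yields that $\mathbf{x}/\|\mathbf{x}\|_2$ is uniform on $\mathbb{S}^{d-1}$.

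For the second claim, write $\widetilde U = \sqrt{d}\, \mathbf{x}/\|\mathbf{x}\|_2$, so by the first part, $\widetilde U \stackrel{d}{=} \sqrt{d}\, U$ with $U$ uniform on $\mathbb{S}^{d-1}$. Fix any $\mathbf{v} \in \mathbb{S}^{d-1}$. Then Lemma \ref{ball-lemma-0} gives, for every $\Delta \in (0,1)$,
\[
\mb P\bigl(\langle U, \mathbf{v}\rangle \ge \Delta\bigr) \le e^{-d \Delta^2 / 2},
\]
and by symmetry $\mb P(|\langle U,\mathbf{v}\rangle| \ge \Delta) \le 2 e^{-d \Delta^2/2}$. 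Substituting $\Delta = t/\sqrt{d}$, this yields for all $t \in (0,\sqrt{d})$
\[
\mb P\bigl(|\langle \widetilde U, \mathbf{v}\rangle| \ge t\bigr) \le 2 e^{-t^2/2},
\]
while for $t \ge \sqrt{d}$ the bound $|\langle \widetilde U, \mathbf{v}\rangle| \le \|\widetilde U\|_2 = \sqrt{d} \le t$ makes the probability zero. Thus the tail bound $\mb P(|\langle \widetilde U, \mathbf{v}\rangle| \ge t) \le 2 e^{-t^2/2}$ holds uniformly in $t \ge 0$ and in $d$.

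Finally, I would convert this tail bound into a bound on the $\psi_2$-norm via the equivalent characterization of sub-Gaussian variables recorded in the remark after the $\psi_q$-norm definition (Lemma 5.5 of \cite{introduction-to-random-matrix}): the above tail yields $\|\langle \widetilde U,\mathbf{v}\rangle\|_{\psi_2} \le C$ for some universal constant $C$. Taking the supremum over $\mathbf{v} \in \mathbb{S}^{d-1}$ gives $\|\widetilde U\|_{\psi_2} \le C$, a bound independent of $d$. No step is really an obstacle; the only subtlety is remembering that the concentration bound of Lemma \ref{ball-lemma-0} saturates at $\Delta = 1$, which is exactly why one obtains a $d$-independent sub-Gaussian constant after the scaling by $\sqrt d$.
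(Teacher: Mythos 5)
Your proposal is correct and follows essentially the same route as the paper: the reflection/orthogonal-invariance argument for uniformity of $\mathbf{x}/\|\mathbf{x}\|_2$, followed by Lemma \ref{ball-lemma-0} with $\Delta = t/\sqrt{d}$ and the tail-bound characterization of sub-Gaussianity. Your explicit handling of the range $t \ge \sqrt{d}$ (where $|\langle \widetilde U,\mathbf{v}\rangle| \le \sqrt{d}$ makes the event empty) is a small point of extra care that the paper glosses over, but it does not change the argument.
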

\begin{proof}
First, we use decomposition \eqref{elliptical-definition} for elliptical distribution together with our assumption that $\mathbf{\Sigma}$ is the identity matrix, to write $\mathbf{x}\stackrel{d}{=}\mu U$, which implies that
\[
\mathbf{x}/\|\mathbf{x}\|_2
\stackrel{d}{=}\textrm{sign}(\mu)U/\| U\|_2
=\textrm{sign}(\mu)U \stackrel{d}{=}U,
\]
with the final distributional equality holding as $\mathbb{S}^{d-1}$, and hence its uniform distribution, is invariant with respect to reflections across any hyperplane through the origin.

To prove the second claim, it is enough to show that 
$\left\|\dotp{\widetilde{U}}{\mathbf{v}}\right\|_{\psi_2}\leq C,~\forall \mathbf{v}\in\mathbb{S}^{d-1}$ 
with constant $C$ independent of $d$. 
By the first claim and Lemma \ref{ball-lemma-0}, we have 
\[
\mb P\left( 
\langle\mathbf{x},\mathbf{v}\rangle/\|\mathbf{x}\|_2\geq \Delta \right)
\leq e^{-d\Delta^2/2},~\forall \mathbf{v}\in\mathbb{S}^{d-1}.
\]
Choosing $\Delta=u/\sqrt{d}$ gives
\[
\mb P \left( 
\dotp{\widetilde{U}}{\mathbf{v}}\geq u \right)\leq e^{-u^2/2},~\forall \mathbf{v}\in\mathbb{S}^{d-1},~\forall u>0.
\]
By an equivalent definition of sub-Gaussian random variables (Lemma 5.5 of \cite{introduction-to-random-matrix}), this inequality implies that 
$\left\|\dotp{\widetilde{U}}{\mathbf{v}}\right\|_{\psi_2}\leq C$, hence finishing the proof.
\end{proof}
With the previous lemma in hand, we now establish the following result.

\begin{lemma}
\label{lemma:truncation-bias}
Under the assumptions of Theorem \ref{master-bound}, there exists a constant $C=C(\kappa,\phi)>0$ such that
\[
\left|\mb E\dotp{y\mathbf{x}-\widetilde{q}\widetilde{U}}{\mathbf{v}}\right|\leq C/\sqrt{m},
\]
for all $\mathbf{v}\in\mathbb{S}^{d-1}$.
\end{lemma}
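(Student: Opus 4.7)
The plan is to rewrite the quantity inside the expectation in a convenient form and then apply H\"older's inequality with carefully chosen conjugate exponents. Since $y\mathbf{x} = q\widetilde{U}$ by the definitions in \eqref{transformation}, one has
$\dotp{y\mathbf{x} - \widetilde{q}\widetilde{U}}{\mathbf{v}} = (q - \widetilde{q})\dotp{\widetilde{U}}{\mathbf{v}}$, and from the definition of the truncation, $|q - \widetilde{q}| \leq |q|\mathbf{1}_{\{|q|>\tau\}}$. So the task reduces to bounding $\l|\mb E (q-\widetilde{q})\dotp{\widetilde{U}}{\mathbf{v}}\r|$ uniformly over $\mathbf{v}\in\mb S^{d-1}$.

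The key step is to apply H\"older's inequality with exponents $p = 1+\alpha$ and $p' = (1+\alpha)/\alpha$, where $\alpha\in(0,\kappa/(2+\kappa)]$ will be chosen at the end. By Lemma \ref{lemma:ball}, $\dotp{\widetilde{U}}{\mathbf{v}}$ is sub-Gaussian with $\|\dotp{\widetilde{U}}{\mathbf{v}}\|_{\psi_2}\leq C_0$ independent of both $d$ and $\mathbf{v}\in\mb S^{d-1}$, so the $\psi_2$-norm definition immediately yields $(\mb E|\dotp{\widetilde{U}}{\mathbf{v}}|^{p'})^{1/p'}\leq C_0\sqrt{p'}$. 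For the other factor, on the event $\{|q|>\tau\}$ one has $|q|^{p} = |q|^{2(1+\kappa)}\,|q|^{p-2(1+\kappa)} \leq \tau^{p-2(1+\kappa)}|q|^{2(1+\kappa)}$ (since $p<2(1+\kappa)$), so the moment assumption $\mb E|q|^{2(1+\kappa)}=\phi<\infty$ gives
$\l(\mb E|q-\widetilde{q}|^{p}\r)^{1/p}\leq \phi^{1/p}\,\tau^{1-2(1+\kappa)/p}$.

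Now choose $\alpha = \kappa/(2+\kappa)$, which makes $1-2(1+\kappa)/p$ equal to $-(1+\kappa)$; plugging in $\tau = m^{1/(2(1+\kappa))}$ then gives $\tau^{-(1+\kappa)} = m^{-1/2}$ exactly. Multiplying the two H\"older factors (and noting that the $\sqrt{p'}$ and $\phi^{1/p}$ contributions collapse to a constant depending only on $\kappa$ and $\phi$), one obtains $\l|\mb E(q-\widetilde{q})\dotp{\widetilde{U}}{\mathbf{v}}\r|\leq C(\kappa,\phi)/\sqrt{m}$, as claimed.

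The main conceptual obstacle is that the natural Cauchy--Schwarz estimate (i.e.\ $p=p'=2$) is too weak here: it only produces a bound of order $\sqrt{\phi}\,\tau^{-\kappa} = \sqrt{\phi}\,m^{-\kappa/(2(1+\kappa))}$, which is strictly worse than $m^{-1/2}$ for every finite $\kappa$. The remedy is to take $p$ close to $1$, which extracts more decay from the truncation by exploiting \emph{all} of the $2(1+\kappa)$ moments of $q$; the resulting large $L^{p'}$ norm of $\dotp{\widetilde{U}}{\mathbf{v}}$ is harmless precisely because the sub-Gaussian norm of $\widetilde{U}$ furnished by Lemma \ref{lemma:ball} is dimension-free, so one pays only a $\kappa$-dependent constant for it.
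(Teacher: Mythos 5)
Your proof is correct and follows essentially the same route as the paper's: the same reduction to $(q-\widetilde q)\langle\widetilde U,\mathbf v\rangle$ via $y\mathbf x = q\widetilde U$, the same dimension-free sub-Gaussian moment bound on $\langle\widetilde U,\mathbf v\rangle$ from Lemma \ref{lemma:ball}, and the same exploitation of the $2(1+\kappa)$-moment of $q$ together with $\tau=m^{1/(2(1+\kappa))}$. The only cosmetic difference is that the paper arranges the estimate as Cauchy--Schwarz followed by H\"older and Markov (in effect a three-factor H\"older with exponents $2(1+\kappa)/\kappa$, $2(1+\kappa)$, and $2$), whereas you absorb the tail event pointwise into a single two-factor H\"older with $p'=2(1+\kappa)/\kappa$; the exponent placed on $\langle\widetilde U,\mathbf v\rangle$ and the resulting constant $\sqrt{2(1+\kappa)/\kappa}\,\|\widetilde U\|_{\psi_2}\phi^{(2+\kappa)/(2(1+\kappa))}$ come out identical.
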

\begin{proof}
By \eqref{transformation}, we have that $y\mathbf{x}=q\widetilde{U}$, thus the claim is equivalent to
\[
\left|\expect{\dotp{\widetilde{U}}{\mathbf{v}}(\widetilde{q}-q)}\right|\leq C/\sqrt{m}.
\]
Since $\widetilde{q}=\textrm{sign}(q)(|q|\wedge\tau)$, we have $|\widetilde{q}-q|=(|q|-\tau){\bf 1}(|q| \ge \tau) \le |q|{\bf 1}(|q| \ge \tau)$, and it follows that
\begin{align*}
\left| \mb E\dotp{\widetilde{U}}{\mathbf{v}} (\widetilde{q}-q)\right|
\leq& \mb E\left|\dotp{\widetilde{U}}{\mathbf{v}} (\widetilde{q}-q)\right| \\
\leq& \mb E \l( \left|\dotp{\widetilde{U}}{\mathbf{v}} q\right|\cdot\mathbf{1}_{\{|q|\geq\tau\}} \r) \\
\leq&\expect{\left|\dotp{\widetilde{U}}{\mathbf{v}} q\right|^2}^{1/2} \mb P\l( |q|\geq\tau \r)^{1/2}\\
\leq&\expect{\left|\dotp{\widetilde{U}}{\mathbf{v}}\right|^{\frac{2(1+\kappa)}{\kappa}}}^{\frac{\kappa}{2(1+\kappa)}}
\expect{|q|^{2(1+\kappa)}}^{\frac{1}{2(1+\kappa)}} \mb P\l( |q|\geq\tau \r)^{1/2},
\end{align*}
where the second to last inequality uses Cauchy-Schwarz, and the last inequality follows from H\"{o}lder's inequality.

For the first term, by Lemma \ref{lemma:ball}, $\widetilde{U}$ is sub-Gaussian with $\|\widetilde{U}\|_{\psi_2}$ independent of $d$. Thus, by the definition of the $\|\cdot\|_{\psi_2}$ norm and the fact that ${\bf v} \in \mathbb{S}^{d-1}$,
\[
\expect{ \left|\dotp{\widetilde U}{\mathbf{v}}\right|^{\frac{2(1+\kappa)}{\kappa}}}^{\frac{\kappa}{2(1+\kappa)}}
\leq \sqrt{\frac{2(1+\kappa)}{\kappa}} \|\widetilde U \|_{\psi_2}.
\]

Recall that $\phi= \mb E |q|^{2(1+\kappa)}$.  
Then, the second term is bounded by $\phi^{\frac{1}{2(1+\kappa)}}$. 
For the final term, since
$\tau=m^{\frac{1}{2(1+\kappa)}}$, Markov's inequality implies that
\begin{align*}
  \l( \mb P \l( |q|>\tau \r) \r) ^{1/2}\leq\left(\frac{\mb E |q|^{2(1+\kappa)}}{\tau^{2(1+\kappa)}}\right)^{1/2}
 \leq\frac{\phi^{1/2}}{\sqrt{m}}.
\end{align*}
Combining these inequalities yields
\[
\left| \mb E\dotp{y\mathbf{x}-\widetilde{q}\widetilde{U}}{\mathbf{v}}\right|
\leq 
\frac{\sqrt{\frac{2(1+\kappa)}{\kappa}} \|\widetilde U \|_{\psi_2}\phi^{\frac{2+\kappa}{2(1+\kappa)}}}{\sqrt{m}}:=C(\kappa,\phi)/\sqrt{m},\]
completing the proof.
\end{proof}


\subsection{Concentration via generic chaining.}
\label{section:chaining}
In the following sections, we will use $c,C,C',C''$ to denote constants that are either absolute, or depend on underlying parameters $\kappa$ and $\phi$ (in the latter case, we specify such dependence). 
To make notation less cumbersome, constants denoted by the same letter ($c,C,C'$, etc.) might be different in various parts of the proof.  

The goal of this subsection is to prove the following inequality:
\begin{lemma}
\label{concentration-multiplier}
Suppose $\widetilde U_i$ and $\widetilde q_i$ are as defined according to \eqref{transformation} and \eqref{transformation-2} respectively. 
Then, for any bounded subset $T\subset\mathbb{R}^d$,
\begin{align*}
\mb P\left( \sup_{t\in T}\left|\frac1m\sum_{i=1}^m\dotp{\widetilde{U}_i}{t} \widetilde{q}_i
-\expect{\dotp{\widetilde{U}}{t} \widetilde{q}}\right|
\geq C\frac{(\omega(T)+\Delta_d(T))\beta}{\sqrt{m}} \right)
\leq ce^{-\beta/2},
\end{align*}
for any $\beta\geq8$, a positive constant $C=C(\kappa,\phi)$ and an absolute constant $c>0$. 
Here
\bea 
\label{def:Deltad}
\Delta_d(T):=\sup_{t\in T}\|t\|_2.
\ena
\end{lemma}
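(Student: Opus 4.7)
The plan is to establish Lemma \ref{concentration-multiplier} by a generic chaining argument applied to a symmetrized form of the empirical process, leveraging the boundedness of the truncated multipliers $\widetilde q_i$ together with the sub-Gaussian behavior of $\widetilde U_i$ recorded in Lemma \ref{lemma:ball}.

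First I would reduce to a Rademacher process. By Lemma \ref{symmetrization}, the tail of $\sup_{t\in T}\bigl|\tfrac{1}{m}\sum_i \widetilde q_i\langle\widetilde U_i,t\rangle-\mathbb{E}\widetilde q\langle\widetilde U,t\rangle\bigr|$ reduces to the expected supremum and the tail of $R_m(t):=\tfrac{1}{m}\sum_i\varepsilon_i \widetilde q_i\langle\widetilde U_i,t\rangle$. Conditioning on $(\widetilde q_i,\widetilde U_i)_{i=1}^m$, $R_m$ is a Rademacher sum with coefficients $\widetilde q_i\langle\widetilde U_i,t\rangle/m$, hence its conditional increments are sub-Gaussian with
\[
\|R_m(t)-R_m(s)\|_{\psi_2\mid (\widetilde q_i,\widetilde U_i)}\;\lesssim\; d_m(t,s),\qquad d_m^2(t,s):=\frac{1}{m^2}\sum_{i=1}^m \widetilde q_i^{\,2}\,\langle\widetilde U_i,t-s\rangle^2.
\]
Talagrand's generic chaining (the upper-bound direction of Lemma \ref{mmt}, in its sub-Gaussian tail form) then yields, conditionally,
\[
\sup_{t\in T}|R_m(t)|\;\lesssim\;\gamma_2(T,d_m)+u\,\sup_{t\in T} d_m(t,0)
\]
with probability at least $1-e^{-u}$.

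Next I would compare $d_m$ with the Euclidean metric normalized by $\sqrt m$. The in-expectation estimate
\[
\mathbb{E}\,d_m^2(t,s)=\frac{1}{m}\,\mathbb{E}\bigl[\widetilde q^{\,2}\langle\widetilde U,t-s\rangle^2\bigr]\;\leq\;\frac{C(\kappa,\phi)}{m}\,\|t-s\|_2^2
\]
follows from Hölder's inequality with exponents $1+\kappa$ and $(1+\kappa)/\kappa$, combining the bound $|\widetilde q|^{2(1+\kappa)}\le |q|^{2(1+\kappa)}$ (whose expectation is $\phi$) with the uniform control of higher moments of $\langle\widetilde U,\cdot\rangle$ supplied by Lemma \ref{lemma:ball}. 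To upgrade this to a uniform-in-$(t,s)$ comparison that holds with probability $1-ce^{-\beta/2}$, I would apply the Bernstein inequality of Lemma \ref{Bernstein} to the centered variables $\widetilde q_i^{\,2}\langle\widetilde U_i,t-s\rangle^2$; these are sub-exponential (since $\widetilde q_i^{\,2}\leq\tau^2$ and $\langle\widetilde U_i,v\rangle^2$ has a $\psi_1$-norm comparable to $\|v\|_2^2$), and the calibration $\tau=m^{1/(2(1+\kappa))}$ is exactly what makes the deviation term $Du/m$ in Lemma \ref{Bernstein} subordinate to the variance term on the range of $u$ dictated by the chaining. A union bound over pairs of consecutive points along an admissible sequence for $(T,\|\cdot\|_2)$ of cardinalities $N_\ell$ then yields $\gamma_2(T,d_m)\lesssim \gamma_2(T,\|\cdot\|_2)/\sqrt m$ on an event of probability $\geq 1-ce^{-\beta/2}$; a parallel argument controls $\sup_{t\in T}d_m(t,0)$ by $\Delta_d(T)/\sqrt m$ on the same event.

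Putting the pieces together, the chaining bound combined with Lemma \ref{mmt} (which identifies $\gamma_2(T,\|\cdot\|_2)$ with the Gaussian mean width $\omega(T)$ up to an absolute constant), and an unconditioning over $(\widetilde q_i,\widetilde U_i)$, gives
\[
\sup_{t\in T}|R_m(t)|\;\leq\; C\,\frac{\omega(T)+\Delta_d(T)\,\beta}{\sqrt m}
\]
with probability at least $1-ce^{-\beta/2}$, and the conclusion for the original centered empirical process follows from the deviation half of Lemma \ref{symmetrization}. The main obstacle is the passage from the in-expectation estimate on $d_m^2$ to a uniform comparison with $\|\cdot\|_2/\sqrt m$: the heavy tails of $q$ rule out a direct sub-Gaussian comparison, and the whole argument hinges on the specific calibration of the truncation level $\tau=m^{1/(2(1+\kappa))}$ that precisely balances the variance and boundedness terms in Bernstein's inequality. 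A secondary, but important, subtlety is that this balance forces the final tail to be exponential $e^{-\beta/2}$ rather than Gaussian, consistent with the sub-exponential nature of the products $\widetilde q_i\langle\widetilde U_i,t\rangle$.
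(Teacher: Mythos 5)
Your overall architecture (symmetrize, then chain the conditional Rademacher process) matches the paper's, but the central step of your argument --- upgrading the in-expectation bound $\mb E\, d_m^2(t,s)\le C\|t-s\|_2^2/m$ to a \emph{uniform} comparison $\gamma_2(T,d_m)\lesssim \gamma_2(T,\|\cdot\|_2)/\sqrt m$ via Bernstein plus a union bound over the admissible sequence --- has a genuine gap, and it is exactly where the difficulty of the lemma lives. For the union bound over the $\le 2^{2^{l+1}}$ pairs at level $l$ to yield total failure probability $ce^{-\beta/2}$, each pair must satisfy $d_m(\pi_l(t),\pi_{l-1}(t))\le C\|\pi_l(t)-\pi_{l-1}(t)\|_2/\sqrt m$ with failure probability about $\exp(-2^l\beta)$. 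Bernstein's inequality for the sum of the sub-exponential variables $\widetilde q_i^2\langle\widetilde U_i,t-s\rangle^2$ at confidence $e^{-u}$ carries a linear term $Du/m$ with $D\gtrsim \tau^{2}\|t-s\|_2^2$ up to constants; already for the multiplier sum itself the analogous linear term is of order $2^l\beta\, m^{-1+\frac{1}{2(1+\kappa)}}\|t-s\|_2$, which is subordinate to the variance term only when $2^l\beta\lesssim m^{\kappa/(1+\kappa)}$ --- essentially the paper's regime $I_1=\{2^l\beta<\log em\}$. Your assertion that the calibration $\tau=m^{1/(2(1+\kappa))}$ makes the linear term subordinate ``on the range of $u$ dictated by the chaining'' is false for $\log em\le 2^l\beta<m$ and a fortiori for $2^l\beta\ge m$: at those scales no concentration inequality for a sum of $m$ terms can deliver a deviation proportional to the mean at confidence $\exp(-2^l\beta)$, so the scale-uniform metric comparison you need simply does not hold.

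The paper circumvents this by never comparing metrics globally: it telescopes $Z(t)-Z(t_0)$ directly and splits the levels into $I_1$ (where plain Bernstein works as you describe), $I_2$ (where it invokes the Montgomery--Smith/Hitczenko inequality for Rademacher sums in terms of the non-increasing rearrangement $\{X_i^*\}$, with a carefully chosen cut point $k=\lfloor 2^l\beta/\log(em/2^l\beta)\rfloor$, and then controls the rearranged sequences $\{w_i^*\}$ and $\{\widetilde q_i^*\}$ separately in Lemmas \ref{cut-point-1}--\ref{cut-point-3}), and $I_3$ (where it decouples via Cauchy--Schwarz into $(\frac1m\sum w_i^2)^{1/2}(\frac1m\sum\widetilde q_i^2)^{1/2}$ and accepts a per-link deviation of order $2^{l/2}\sqrt\beta/\sqrt m$ per unit $\|\pi_l(t)-\pi_{l-1}(t)\|_2$, which is then absorbed by the chaining weights). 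To repair your proof you would need to supply substitutes for these two regimes; the rearrangement device in $I_2$ is the key missing idea and is not obtainable from Bernstein alone.
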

The main technique we apply is the generic chaining method developed by M. Talagrand \citep{Talagrand-book-2} for bounding the supremum of stochastic processes. 
Recently, \cite{Mendelson-1} and \cite{tail-bound-chaining} advanced the technique to obtain a sharp bound for supremum of processes index by squares of functions. 
More recently, \cite{Mendelson-2} proved a concentration result for the supremum of multiplier processes
under weak moment assumptions. In the current work, we show that exponential-type concentration inequalities for multiplier processes, such as the one in Lemma \ref{concentration-multiplier}, are achievable by applying truncation under a bounded $2(1+\kappa)$-moment assumption.

Define
\begin{align*} 
\overline{Z}(t)=&\frac1m\sum_{i=1}^m\dotp{\widetilde{U}_i}{t} \widetilde{q}_i
-\expect{\dotp{\widetilde{U}}{t} \widetilde{q}},\\
Z(t)=&\frac1m\sum_{i=1}^m\varepsilon_i \widetilde{q}_i\dotp{\widetilde{U}_i}{t},~\forall t\in T,
\end{align*}
where $T$ is a bounded set in $\mathbb{R}^d$ and $\{\varepsilon_i\}_{i=1}^m$ is a sequence i.i.d. Rademacher random variables taking values $\pm 1$  with probability $1/2$ each, and independent of $\{\widetilde{U}_i,\widetilde{q}_i, \ i=1, \ldots,m\}$. 
Result of Lemma \ref{concentration-multiplier} easily follows from the following concentration inequality:
\begin{lemma}
For any $\beta\geq8$,
\begin{equation}\label{major-criterion}
\mb P\left[\sup_{t\in T}\left|Z(t)\right|
\geq C\frac{(\omega(T)+\Delta_d(T))\beta}{\sqrt{m}}\right]
\leq ce^{-\beta/2},
\end{equation}
where $C=C(\kappa,\phi)$ is another constant possibly different from that of Lemma \ref{concentration-multiplier}, and $c>0$ is an absolute constant.
\end{lemma}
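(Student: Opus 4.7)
The plan is to prove this deviation bound for the Rademacher multiplier process $Z(t)$ by generic chaining applied conditionally on the multipliers $W_i:=(\widetilde q_i,\widetilde U_i)$, $i=1,\dots,m$, combined with a derandomization of the resulting random semi-metric. First, conditional on $W$, the increment $Z(t)-Z(s)=\frac{1}{m}\sum_i\varepsilon_i\widetilde q_i\langle\widetilde U_i,t-s\rangle$ is a weighted Rademacher sum, hence sub-Gaussian by Hoeffding's inequality in the random semi-metric
\[
d_W(t,s):=\frac{1}{m}\Bigl(\sum_{i=1}^m\widetilde q_i^{\,2}\langle\widetilde U_i,t-s\rangle^2\Bigr)^{1/2}.
\]
Talagrand's generic chaining deviation inequality (Theorem 2.2.27 of \citep{Talagrand-book-2}) then gives, for any fixed $t_0\in T$ and any $u>0$, with conditional probability at least $1-2e^{-u}$,
\[
\sup_{t\in T}\bigl|Z(t)-Z(t_0)\bigr|\le C\bigl[\gamma_2(T,d_W)+\sqrt{u}\,D_W(T)\bigr],\qquad D_W(T):=\sup_{t,s\in T}d_W(t,s).
\]

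The next task is to derandomize $\gamma_2(T,d_W)$ and $D_W(T)$. The key pointwise bound, obtained by H\"older's inequality with exponents $1+\kappa$ and $(1+\kappa)/\kappa$ together with the sub-Gaussianity of $\widetilde U$ from Lemma \ref{lemma:ball}, is
\[
\expect{\widetilde q^{\,2}\langle\widetilde U,v\rangle^2}\le C(\kappa,\phi)\,\|v\|_2^2,
\]
so that $\expect{d_W(t,s)^2}\le C(\kappa,\phi)\|t-s\|_2^2/m$. To upgrade this pointwise control to a uniform estimate in $t-s$ over $T-T$, I would bound the quadratic empirical process $v\mapsto\frac{1}{m}\sum_i\widetilde q_i^{\,2}\langle\widetilde U_i,v\rangle^2$, exploiting that the truncation $|\widetilde q_i|\le\tau$ together with the $2(1+\kappa)$-moment hypothesis makes each summand sub-exponential with a controlled $\psi_1$-norm, so that Bernstein's inequality (Lemma \ref{Bernstein}) applies at each scale. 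Combined with Talagrand's majorizing measure theorem $\gamma_2(T,\|\cdot\|_2)\asymp\omega(T)$ and a sub-Gaussian-to-Gaussian comparison, this should yield, with probability at least $1-ce^{-\beta/2}$,
\[
\gamma_2(T,d_W)\lesssim\frac{\omega(T)+\sqrt{\beta}\,\Delta_d(T)}{\sqrt m},\qquad D_W(T)\lesssim\frac{\sqrt\beta\,\Delta_d(T)}{\sqrt m}.
\]
The term $|Z(t_0)|$ for a single fixed $t_0\in T$ is then handled directly by Bernstein's inequality, exploiting that each $\widetilde q_i\langle\widetilde U_i,t_0\rangle$ has bounded $\psi_1$-norm of order $\|t_0\|_2$; this gives $|Z(t_0)|\lesssim\Delta_d(T)\beta/\sqrt m$ once $m\ge\beta^2$. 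Choosing $u=\beta$ and combining all three estimates delivers the claimed tail bound \eqref{major-criterion}.

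The main obstacle I anticipate is the derandomization step: passing from the expectation bound $\expect{d_W(t,s)^2}\lesssim\|t-s\|_2^2/m$ to a high-probability uniform bound on $\gamma_2(T,d_W)$ without losing a factor of $\tau=m^{1/(2(1+\kappa))}$. The naive deterministic bound $|\widetilde q_i|\le\tau$ yields $d_W(t,s)\le(\tau/\sqrt m)\bigl(\frac{1}{m}\sum_i\langle\widetilde U_i,t-s\rangle^2\bigr)^{1/2}$ and so produces $\tau/\sqrt m$ instead of the target $1/\sqrt m$; the weak $2(1+\kappa)$-moment of $q$ must be invoked inside the chaining argument itself, not merely through the crude truncation bound. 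This is exactly what forces the sub-exponential tail rate $\beta$ in the final estimate (in place of the Gaussian rate $\sqrt\beta$), and why generic chaining, rather than a Sudakov--Fernique comparison alone, is indispensable here.
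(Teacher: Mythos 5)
Your overall architecture --- condition on the multipliers, treat $Z$ as a sub-Gaussian process in the random semi-metric $d_W$, then derandomize $\gamma_2(T,d_W)$ --- is a legitimate alternative skeleton, but the derandomization step is the entire content of the lemma and your proposal does not actually supply a working argument for it. The route you sketch (Bernstein's inequality on the quadratic process $v\mapsto\frac{1}{m}\sum_i\widetilde q_i^{\,2}\langle\widetilde U_i,v\rangle^2$ at each chaining scale) fails in precisely the regime you flag: each summand has $\psi_1$-norm of order $\tau^2\|v\|_2^2$ with $\tau^2=m^{1/(1+\kappa)}$, so at levels $l$ with $2^l\beta\asymp m$ the sub-exponential deviation term $\tau^2\,2^l\beta/m^2$ contributes $\tau\|v\|_2/\sqrt m$ to $d_W$, and the spurious factor $\tau$ survives into $\gamma_2(T,d_W)$. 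A uniform bound over the sphere is also unavailable since the paper's regime is $d\gg m$. Your closing paragraph correctly diagnoses that the $2(1+\kappa)$-moment must be used ``inside the chaining argument itself,'' but the proposal stops at ``this should yield'' without exhibiting the mechanism; as written, there is a genuine gap.

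The paper closes exactly this gap by a different device. It chains the Rademacher sum directly and splits the levels into three regimes according to the size of $2^l\beta$ relative to $\log(em)$ and $m$. In the easy regimes ($2^l\beta<\log em$ and $2^l\beta\geq m$) Bernstein and Cauchy--Schwarz suffice. In the critical middle regime it invokes the rearrangement inequality of Montgomery--Smith for Rademacher sums, $|\sum_i\varepsilon_iX_i|\leq\sum_{i\leq k}X_i^*+u(\sum_{i>k}(X_i^*)^2)^{1/2}$ with the cut point $k=\lfloor 2^l\beta/\log(em/2^l\beta)\rfloor$, and applies H\"older with exponents $1+\kappa$ and $(1+\kappa)/\kappa$ only to the tail block, separating the order statistics of $\langle\widetilde U_i,\pi_l(t)-\pi_{l-1}(t)\rangle$ (controlled via sub-Gaussianity and combinatorial union bounds, Lemmas \ref{cut-point-1}--\ref{cut-point-2}) from the global quantities $\sum_i\widetilde q_i^{\,2}$ and $\sum_i\widetilde q_i^{\,2(1+\kappa)}$ (controlled once and for all by Lemma \ref{cut-point-3}). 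This decoupling of the top-$k$ coordinates from the rest is the ingredient your plan is missing; without it, or an equivalent substitute, the claimed bound on $\gamma_2(T,d_W)$ is not established.
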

To deduce the inequality of Lemma \ref{concentration-multiplier}, we first apply the symmetrization inequality (Lemma \ref{symmetrization}), followed by Lemma \ref{basic-inequality} with $\beta_0=8$. 
It implies that
\[
\expect{\sup_{t\in T}\left|\overline{Z}(t)\right|}
\leq2\expect{\sup_{t\in T}\left|Z(t)\right|}\leq2C\left(8+2ce^{-4}\right)\frac{\omega(T)+\Delta_d(T)}{\sqrt{m}}.
\]
Application of the second bound of the symmetrization lemma with $u=2C(\omega(T)+\Delta_d(T))\beta/\sqrt{m}$ and \eqref{major-criterion} completes the proof of Lemma \ref{concentration-multiplier}.


It remains to justify \eqref{major-criterion}.
We start by picking an arbitrary point $t_0\in T$ such that there exists an admissible sequence $\{t_0\}=\mathcal{A}_0\subseteq\mathcal{A}_1\subseteq\mathcal{A}_2\subseteq\cdots$ satisfying
\begin{equation}\label{inter-1}
\sup_{t\in T}\sum_{l=0}^\infty2^{l/2}\|\pi_l(t)-t\|_2\leq 2\gamma_2(T),
\end{equation}
where we recall that $\pi_l$ is the closest point map from $T$ to $\mathcal{A}_l$ and the factor 2 is introduced so as to deal with the case where the infimum in the definition \eqref{eq:def.gamma2} of $\gamma_2(T)$ is not achieved. Then, write $Z(t)-Z(t_0)$ as the telescoping sum:
\[Z(t)-Z(t_0)=\sum_{l=1}^{\infty}Z(\pi_l(t))-Z(\pi_{l-1}(t))
=\sum_{l=1}^{\infty}\frac1m\sum_{i=1}^m\varepsilon_i\widetilde{q}_i\dotp{\widetilde{U}_i}{\pi_l(t)-\pi_{l-1}(t)}.\]
We claim that the telescoping sum converges with probability 1 for any $t\in T$. 
Indeed, note that for each fixed set of realizations of $\{\mathbf{x}_i\}_{i=1}^m$ and $\{\varepsilon_i\}_{i=1}^m$,
each summand is bounded as
\[
|\varepsilon_i \widetilde{q}_i \langle\widetilde{U}_i,\pi_l(t)-\pi_{l-1}(t)\rangle |
\leq|\widetilde{q}_i|\|\widetilde{U}_i\|_2\|\pi_l(t)-\pi_{l-1}(t)\|_2
\leq|\widetilde{q}_i|\|\widetilde{U}_i\|_2(\|\pi_l(t)-t\|_2+\|\pi_{l-1}(t)-t\|_2).
\]
Furthermore, since $T$ is a compact subset of $\mathbb{R}^d$, its Gaussian mean width is finite. 
Thus, by lemma \ref{mmt}, $\gamma_2(T)\leq L\omega(T)<\infty$. This inequality further implies that the sum on the left hand side of \eqref{inter-1} converges with probability 1. 

Next, with $\beta\geq 8$ being fixed, we split the index set $\{l\geq1\}$ into the following three subsets: 
\begin{align*}
I_1&=\{l\geq1:2^{l}\beta<\log em\};\\
I_2&=\{l\geq1:\log em\leq 2^{l}\beta< m\};\\
I_3&=\{l\geq1:2^{l}\beta\geq m\}.
\end{align*}
By the assumptions in Theorem \ref{master-bound} and the bound $\beta\geq8$, we have that $m\geq(\omega(T)+1)^2\beta^2\geq64$, implying that 
$\log em=1+\log m <m$, and hence these three index sets are well defined. 
Depending on $\beta$, some of them might be empty, but this only simplifies our argument by making the partial sum over such an index set equal 0. 

The following argument yields a bound for $Z(\pi_l(t))-Z(\pi_{l-1}(t))$, assuming all three index sets are nonempty. 
Specifically, we show that
\begin{equation}
\label{chaining-goal}
\mb P \left(\sup_{t\in T}\left|\sum_{l\in I_j}\left(Z(\pi_l(t))-Z(\pi_{l-1}(t))\right)\right|
\geq C\frac{\gamma_2(T)\beta}{\sqrt{m}} \right)
\leq ce^{-\beta/2},
\end{equation}
for $C=C(\kappa,\phi)$ and $j=1,2,3$, respectively.

\subsubsection{The case $l\in I_1$.}\label{first-chaining}
\begin{proof}[Proof of inequality \eqref{chaining-goal} for the index set $I_1$]
Recall that $\tau=m^{\frac{1}{2(1+\kappa)}}$. \\
For each $t\in T$ we apply Bernstein's inequality (Lemma \ref{Bernstein}) to estimate each summand 
\[
Z(\pi_l(t))-Z(\pi_{l-1}(t))=\frac1m\sum_{i=1}^m\varepsilon_i\widetilde{q}_i\dotp{\widetilde{U}_i}{\pi_l(t)-\pi_{l-1}(t)}.
\] 
For any integer $p\geq2$, we have the following chains of inequalities:
%
\begin{align*}
&\expect{\left|\varepsilon\widetilde{q}\dotp{\widetilde{U}}{\pi_l(t)-\pi_{l-1}(t)} \right|^p}\\
\leq&\expect{\left|\varepsilon\dotp{\widetilde{U}}{\pi_l(t)-\pi_{l-1}(t)} \right|^p q^{2}
	\cdot|\widetilde{q}|^{p-2}}\\
\leq&\expect{\left|\dotp{\widetilde{U}}{\pi_l(t)-\pi_{l-1}(t)} \right|^p q^2}
\cdot \tau^{p-2}\\
\leq& \tau^{p-2}
\expect{ \left|\dotp{\widetilde{U}}{\pi_l(t)-\pi_{l-1}(t)}\right|
	^{\frac{1+\kappa}{\kappa}p}}^{\frac{\kappa}{1+\kappa}}\expect{ q^{2(1+\kappa)}}^{\frac{1}{1+\kappa}}\\
\leq&  \tau^{p-2}\|\widetilde{U}\|_{\psi_2}^p\left(\frac{(1+\kappa)p}{\kappa}\right)^{p/2}\phi^{\frac{1}{1+\kappa}}
\|\pi_l(t)-\pi_{l-1}(t)\|_2^p,
\end{align*}
where the second inequality follows from the truncation bound, the third from H\"{o}lder's inequality, and the last from the assumption that 
$\expect{q^{2(1+\kappa)}}\leq\phi$ and the following bound: by Lemma \ref{lemma:ball}, $\widetilde{U}_i$ is sub-Gaussian, hence for any $p\geq 2$
\[
\l(\mb E\dotp{\widetilde{U}_i}{\mathbf{v}}^{\frac{1+\kappa}{\kappa}p} \r)^{\frac{\kappa}{(1+\kappa)p}}
\leq\left(\frac{(1+\kappa)p}{\kappa}\right)^{1/2}\|\widetilde{U}_i\|_{\psi_2}\|\mathbf{v}\|_2,~\forall \mathbf{v}\in\mathbb{R}^d.
\]
We also note that $\|\widetilde{U}_i\|_{\psi_2}$ does not depend on $d$ by Lemma \ref{lemma:ball}. 
Next, by Stirling's approximation, 
$p!\geq\sqrt{2\pi}\sqrt{p}(p/e)^p$, thus there exist constants
$C'=C'(\kappa,\phi)$ and $C''=C''(\kappa)$ such that
\begin{align*}
\mb E\left|\varepsilon \widetilde{q} \dotp{\widetilde{U}}{\pi_l(t)-\pi_{l-1}(t)} \right|^p
\leq \frac{p!}{2}C'\|\pi_l(t)-\pi_{l-1}(t)\|_2^2(C''\tau\|\pi_l(t)-\pi_{l-1}(t)\|_2)^{p-2}.
\end{align*}
Bernstein's inequality (Lemma \ref{Bernstein}), with $\sigma=C'\|\pi_l(t)-\pi_{l-1}(t)\|_2$, $D=C''\tau\|\pi_l(t)-\pi_{l-1}(t)\|_2$ with $\tau=m^{1/2(1+\kappa)}$ now implies
\begin{align*}
\mb P \left(
\left|\frac1m\sum_{i=1}^m\varepsilon_i\widetilde{q}_i\dotp{\widetilde{U}_i}{\pi_l(t)-\pi_{l-1}(t)}\right|\geq\left(\frac{C'\sqrt{2u}}{\sqrt{m}}+\frac{C'' u}{m^{1-\frac{1}{2(1+\kappa)}}}\right)\|\pi_l(t)-\pi_{l-1}(t)\|_2
\right)
\leq 2e^{-u},
\end{align*}
for any $u>0$. Taking $u=2^l\beta$,  noting that as $\beta\geq8$ by assumption, 
we have $m\geq(\omega(T)+1)^2\beta^2\geq64$, and since $l \in I_1$, $2^l\leq2^l\beta < \log em$. 
In turn, this implies
\begin{align*}
\frac{2^l}{m^{1-\frac{1}{2(1+\kappa)}}}=\frac{2^{l/2}}{m^{1/2}}\cdot\frac{2^{l/2}}{m^{\kappa/2(1+\kappa)}}
\leq\frac{2^{l/2}}{m^{1/2}}\cdot\sqrt{\frac{\log em}{m^{\kappa/(1+\kappa)}}}
\leq\sqrt{\frac{1+\kappa}{\kappa}}\frac{2^{l/2}}{m^{1/2}},
\end{align*}
where the last inequality follows from the fact that $\log em$ is dominated by $\frac{1+\kappa}{\kappa}m^{\kappa/(1+\kappa)}$ for all $m \ge 1$.
This inequality implies that there exists a positive constant $C=C(\kappa,\phi)$ such that for any $\beta \geq 8$
\begin{align}\label{inter-2}
\mb P \l( \Omega_{l,t} \r) \leq2\exp(-2^l\beta),
\end{align}
where for all $l \ge 1$ and $t \in T$ we let 
\begin{align*}
\Omega_{l,t}=\left\{\omega: \left|\frac1m\sum_{i=1}^m\varepsilon_i\widetilde{q}_i\dotp{\widetilde{U}_i}{\pi_l(t)-\pi_{l-1}(t)}\right|\geq C\frac{2^{l/2}\beta}{\sqrt{m}}\|\pi_l(t)-\pi_{l-1}(t)\|_2\right\}.
\end{align*}
Notice that for each $l \ge 1$ the number of pairs $(\pi_l(t),\pi_{l-1}(t))$ appearing in the sum in \eqref{chaining-goal} can be bounded by 
$|\mathcal{A}_l|\cdot|\mathcal{A}_{l-1}|\leq2^{2^{l+1}}$. Thus, by a union bound and \eqref{inter-2},
\begin{align*}
\mb P\left(\bigcup_{t\in T}\Omega_{l,t}\right)
\leq \
2\cdot 2^{2^{l+1}}\exp(-2^l\beta),
\end{align*}
and hence,
\begin{align*}
\mb P \l( \bigcup_{l\in I_1,t\in T}\Omega_{l,t} \r) \leq
&\sum_{l\in I_1}2\cdot 2^{2^{l+1}}\exp(-2^l\beta)\\
\leq&\sum_{l\in I_1}2\cdot 2^{2^{l+1}}\exp\left(-2^{l-1}\beta-\beta/2\right)
\leq ce^{-\beta/2},
\end{align*}
for some absolute constant $c>0$, where in the last inequality we use the fact $\beta\geq 8$ to get a geometrically decreasing sequence. 
Thus, on the complement of the event $\cup_{l\in I_1,t\in T}\Omega_{l,t}$, we have that with probability at least $1-ce^{-\beta/2}$,
\begin{align*}
\sup_{t\in T}\left|\sum_{l\in I_1}\left(Z(\pi_l(t))-Z(\pi_{l-1}(t))\right)\right|
\leq&\sup_{t\in T}\sum_{l\in I_1}\left|Z(\pi_l(t))-Z(\pi_{l-1}(t))\right|\\
\leq&\sup_{t\in T}C\sum_{l\in I_1}\frac{2^{l/2}\beta}{\sqrt{m}}\|\pi_l(t)-\pi_{l-1}(t)\|_2\\
\leq&\sup_{t\in T}C\sum_{l=1}^{\infty}\frac{2^{l/2}\beta}{\sqrt{m}}\|\pi_l(t)-\pi_{l-1}(t)\|_2\\
\leq&4C\frac{\gamma_2(T)\beta}{\sqrt{m}},
\end{align*}
for $C=C(\kappa,\phi)$, where the last inequality follows from triangle inequality $\|\pi_l(t)-\pi_{l-1}(t)\|_2\leq\|\pi_{l-1}(t)-t\|_2+\|\pi_l(t)-t\|_2$ and \eqref{inter-1}. 
This proves the inequality \eqref{chaining-goal} for $l\in I_1$.
\end{proof}

\subsubsection{The case $l\in I_2$.}


This is the most technically involved case of the three. 
For any fixed $t\in T$ and $l\in I_2$, we let $X_i=\widetilde{q}_i \dotp{\widetilde{U}_i}{\pi_l(t)-\pi_{l-1}(t)} $ and $w_i=\langle\widetilde{U}_i,\pi_l(t)-\pi_{l-1}(t)\rangle$. Then $X_i=\widetilde{q}_i w_i$ and
\begin{equation}
\label{inter-average}
Z(\pi_l(t))-Z(\pi_{l-1}(t))=\frac1m\sum_{i=1}^m\varepsilon_iX_i
=\frac1m\sum_{i=1}^m\varepsilon_iw_i\widetilde{q}_i.
\end{equation}
For every fixed $k\in\{1,2,\cdots,m-1\}$ and fixed $u>0$, we bound the summation using the following inequality
\begin{align*}
\mb P\left(
\left|\sum_{i=1}^m\varepsilon_iX_i\right|\geq\sum_{i=1}^kX^*_i+u\left(\sum_{i=k+1}^m(X_i^*)^2\right)^{1/2}
\right)
\leq2\exp(-u^2/2),
\end{align*}
where $\{X_i^*\}_{i=1}^m$ is the \textit{non-increasing} rearrangement of $\{|X_i|\}_{i=1}^m$ and $\{\varepsilon_i\}_{i=1}^m$ is a sequence of i.i.d. Rademancher random variables independent of $\{X_i\}_{i=1}^m$.
\begin{remark}
This bound was first stated and proved in \cite{Rademancher-sums} with a sequence of fixed constants $\{X_i\}_{i=1}^m$. The current form can be obtained using independence property and conditioning on $\{X_i\}_{i=1}^m$. 
Furthermore, \cite{Rademancher-sums} tells us that the optimal choice of $k$ is at $\mathcal{O}(u^2)$
Applications of this inequality to generic chaining-type arguments were previously introduced by \cite{Mendelson-2}.
\end{remark}
Letting $J$ be the set of indices of the variables corresponding to the $k$ largest coordinates of $\{|w_i|\}_{i=1}^m$ and of $\{|\widetilde{q}_i|\}_{i=1}^m$, we have $|J|\leq2k$ and with probability at least $1 - 2\exp(-u^2/2)$
\begin{align}
\left|\sum_{i=1}^m\varepsilon_iX_i\right|
&\leq\sum_{i\in J}X^*_i+u\left(\sum_{i\in J^c}(X_i^*)^2\right)^{1/2}\nonumber\\
&\leq2\sum_{i=1}^kw_i^*\widetilde{q}_i^*+u\left(\sum_{i\in J^c}(w_i^*\widetilde{q}_i^*)^2\right)^{1/2}\nonumber\\
&\leq2\left(\sum_{i=1}^k(w_i^*)^2\right)^{1/2}\left(\sum_{i=1}^k(\widetilde{q}_i^*)^2\right)^{1/2}
+u\left(\sum_{i= k+1}^m(w_i^*)^{\frac{2(1+\kappa)}{\kappa}}\right)^{\frac{\kappa}{2(1+\kappa)}}\left(\sum_{i=k+1}^m(\widetilde{q}_i^*)^{2(1+\kappa)}\right)^{\frac{1}{2(1+\kappa)}}\nonumber\\
&\leq2\left(\sum_{i=1}^k(w_i^*)^2\right)^{1/2}\left(\sum_{i=1}^m\widetilde{q}_i^2\right)^{1/2}
+u\left(\sum_{i= k+1}^m(w_i^*)^{\frac{2(1+\kappa)}{\kappa}}\right)^{\frac{\kappa}{2(1+\kappa)}}\left(\sum_{i=1}^m\widetilde{q}_i^{2(1+\kappa)}\right)^{\frac{1}{2(1+\kappa)}}\label{inter-3}
\end{align}
where the second to last inequality is a consequence of H\"{o}lder's inequality. 
We take $u=2^{(l+1)/2}\sqrt{\beta}$. The key is to pick an appropriate cut point $k$ for each $l\in I_2$. Here, we choose $k=\lfloor2^l\beta/\log(em/2^l\beta)\rfloor$, which makes $k=\mathcal{O}(2^l\beta)$ and also guarantees that $k\in\{1,2,\cdots,m-1\}$; see Lemma \ref{support-2}. Under this choice, we have the following lemma:

\begin{lemma}\label{cut-point-1}
Let $k=\lfloor2^l\beta/\log(em/2^l\beta)\rfloor$, $w_i=\dotp{\widetilde{U}_i}{\pi_l(t)-\pi_{l-1}(t)}$ and $\{w_i^*\}_{i=1}^m$ be the nonincreasing rearrangement of $\{|w_i|\}_{i=1}^m$. Then there exists an absolute constant $C>1$ such that for all $\beta \ge 8$,
\[
\mb P \l( 
\left(\sum_{i=1}^k(w_i^*)^2\right)^{1/2}\geq C2^{l/2}\|\pi_l(t)-\pi_{l-1}(t)\|_2\sqrt{\beta}
\right)
\leq2\exp(-2^{l}\beta).
\]
\end{lemma}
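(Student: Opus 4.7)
The plan is to reduce the claim to a concentration statement about a chi-squared-type sum of sub-Gaussian squares via a union bound over subsets of size $k$.

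\textbf{Step 1 (normalization).} Let $r = \|\pi_l(t) - \pi_{l-1}(t)\|_2$ and set $v_i = w_i/r$. By Lemma \ref{lemma:ball}, $\widetilde U_i$ is sub-Gaussian with $\|\widetilde U\|_{\psi_2}$ independent of $d$, hence $\|v_i\|_{\psi_2} \le \|\widetilde U\|_{\psi_2} =: C_0$, a universal constant. The target bound is equivalent to showing that
\[
\mathbb{P}\Bigl(\sum_{i=1}^k (v_i^*)^2 \ge C^2\, 2^l\beta\Bigr) \le 2\exp(-2^l\beta),
\]
which eliminates the dependence on $r$ and on the dimension.

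\textbf{Step 2 (subset reformulation and Bernstein).} The identity
\[
\sum_{i=1}^k (v_i^*)^2 = \max_{\substack{I \subseteq \{1,\dots,m\}\\|I|=k}} \sum_{i \in I} v_i^2
\]
is used to reduce to controlling the quadratic sum over an arbitrary fixed subset $I$. For each fixed $I$, the $v_i^2$, $i \in I$, are i.i.d.\ nonnegative sub-exponential random variables with $\|v_i^2\|_{\psi_1} \le 2C_0^2$ and $\mathbb{E} v_i^2 \lesssim C_0^2$. Bernstein's inequality (Lemma \ref{Bernstein}) then gives, for $t > 0$,
\[
\mathbb{P}\Bigl(\sum_{i\in I} v_i^2 \ge C_1 k + t\Bigr) \le 2\exp\!\Bigl(-c\min\bigl(t^2/k,\,t\bigr)\Bigr),
\]
for appropriate absolute constants $C_1,c > 0$.

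\textbf{Step 3 (union bound and choice of $t$).} A union bound over the $\binom{m}{k} \le (em/k)^k$ subsets yields
\[
\mathbb{P}\Bigl(\max_{|I|=k}\sum_{i\in I} v_i^2 \ge C_1 k + t\Bigr) \le 2\exp\!\Bigl(k\log(em/k) - c\min(t^2/k, t)\Bigr).
\]
Choose $t = A\cdot 2^l\beta$ with $A$ a sufficiently large absolute constant. Because $l \in I_2$ implies $\log(em/2^l\beta) \ge 1$, the definition of $k$ forces $k \le 2^l\beta$, so $\min(t^2/k, t) \ge A\cdot 2^l\beta$.

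\textbf{Step 4 (the key combinatorial estimate).} The crux is the inequality $k\log(em/k) \le 2\cdot 2^l\beta$ for the prescribed $k$. Writing $N = em/(2^l\beta) \ge e$ (again by $l \in I_2$), one has $k \le 2^l\beta/\log N$, so
\[
\frac{em}{k} \ge N\log N, \qquad \log(em/k) \le \log N + \log\log N \le 2\log N,
\]
which yields $k\log(em/k) \le 2\cdot 2^l\beta$. Combined with Step 3, choosing $A \ge 3/c$ makes $k\log(em/k) - c\min(t^2/k,t) \le -2^l\beta$, and $C_1 k \le C_1\cdot 2^l\beta$ is absorbed by enlarging the constant $C$.

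\textbf{Anticipated obstacle.} The only delicate point is the bookkeeping in Step 4: the cut-point $k \asymp 2^l\beta/\log(em/2^l\beta)$ is precisely designed to balance the sub-exponential tail against the combinatorial entropy of $k$-subsets, so one must verify carefully that the logarithmic factor introduced by passing from $em/2^l\beta$ to $em/k$ merely contributes a constant and not an additional $\log$-factor. The range restriction $l \in I_2$ (ensuring $N \ge e$) is essential here, and the lower bound on $k$ contained in Lemma \ref{support-2} guarantees that $k \ge 1$ so that the quantity $\sum_{i=1}^k (v_i^*)^2$ is nontrivial.
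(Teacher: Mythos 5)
Your proposal is correct and follows essentially the same route as the paper: Bernstein's inequality for the sub-exponential squares over a fixed $k$-subset, a union bound over the $\binom{m}{k}\le (em/k)^k$ subsets, and the observation that $k\le 2^l\beta$ makes the linear (sub-exponential) term dominate, with the combinatorial estimate $k\log(em/k)\lesssim 2^l\beta$ handled in the paper by Lemma \ref{support-1}. The only quibble is in Step 4: bounding $\log(em/k)$ from above requires a lower bound on $k$, and the floor in $k=\lfloor 2^l\beta/\log N\rfloor$ only gives $k\ge 2^l\beta/(2\log N)$, so you get $\log(em/k)\le \log 2+\log N+\log\log N\le 3\log N$ and hence $k\log(em/k)\le 3\cdot 2^l\beta$ rather than $2\cdot 2^l\beta$ — a harmless change of constant.
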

\begin{proof}
By Lemma \ref{lemma:ball}, we know that $\{w_i\}_{i=1}^m$ are i.i.d. sub-Gaussian random variables. 
Thus, by Lemma \ref{prop-1}, $w_i^2$ is sub-exponential with norm 
\begin{equation}\label{inter-subexp}
\|w_i^2\|_{\psi_1}=2\|w_i\|_{\psi_2}^2 \leq 2\|\widetilde{U}_i\|_{\psi_2}^2\|\pi_l(t)-\pi_{l-1}(t)\|_2^2.
\end{equation}
It then follows from Bernstein's inequality (Lemma \ref{Bernstein}) that for any fixed set $J\subseteq \{1,2,\cdots,m\}$ with $|J|=k$,
\begin{align*}
\mb P \left(
\left| \frac{1}{k} \sum_{i\in J} \left(w_i^2-\expect{w_i^2}\right) \right| 
\geq 2\| \widetilde{U}_i\|_{\psi_2}^2\|\pi_l(t)-\pi_{l-1}(t) \|_2^2
\left(\sqrt{\frac{2u}{k}}+\frac{u}{k}\right)
\right)
\leq2\exp(-u).
\end{align*}
We choose $u=4\cdot2^{l}\beta=2^{l+2}\beta$. 
Since $2^l\beta\geq \lfloor2^l\beta/\log(em/2^l\beta)\rfloor=k\geq1$, the factor $u/k$
dominates the right hand side. 
Noting that $\expect{w_i^2}=\|\pi_l(t)-\pi_{l-1}(t)\|_2^2$, we obtain
\begin{align*}
\mb P \l(
\left(\sum_{i\in J}w_i^2\right)^{1/2}\geq C2^{l/2}\|\pi_l(t)-\pi_{l-1}(t)\|_2\sqrt{\beta}
\right)
\leq2\exp(-4\cdot2^l\beta),
\end{align*}
where $C\leq 4\| \widetilde{U}_i\|_{\psi_2}$; note that the upper bound for $C$ is independent of $d$ by Lemma \ref{ball-lemma-0}. 
Thus, 
\begin{align*}
&\mb P \left(
\left(\sum_{i = 1}^k(w_i^*)^2\right)^{1/2}\geq C2^{l/2}\|\pi_l(t)-\pi_{l-1}(t)\|_2\sqrt{\beta}
\right)
\\
=& \mb P \left(
\exists J\subseteq\{1,\cdots,m\},~|J|=k:
\left(\sum_{i\in J}w_i^2\right)^{1/2}\geq C2^{l/2}\|\pi_l(t)-\pi_{l-1}(t)\|_2\sqrt{\beta}
\right)
\\
\leq&
\left(
\begin{array}{c}
 m   \\
 k   
\end{array}
\right)\cdot 
\mb P\left(
\left(\sum_{i\in J}w_i^2\right)^{1/2}\geq C2^{l/2}\|\pi_l(t)-\pi_{l-1}(t)\|_2\sqrt{\beta}
\right) 
\\
\leq&2\left(
\begin{array}{c}
 m   \\
 k   
\end{array}
\right)\exp(-4\cdot2^l\beta)\\
\leq&2\left(\frac{em}{k}\right)^k\exp(-4\cdot2^l\beta)
\leq2\exp(-2^l\beta),
\end{align*}
where the last step follows from $\left(\frac{em}{k}\right)^k\leq\exp(3\cdot2^l\beta)$, an inequality proved in lemma \ref{support-1} in Appendix \ref{app-A}.
\end{proof}

\begin{lemma}\label{cut-point-2}
Let $k=\lfloor2^l\beta/\log(em/2^l\beta)\rfloor$, $w_i=\dotp{\widetilde{U}_i}{\pi_l(t)-\pi_{l-1}(t)}$ and $\{w_i^*\}_{i=1}^m$ be the non-increasing rearrangement of $\{|w_i|\}_{i=1}^m$. 
Then
\[
\mb P \left(
\left(\sum_{i=k+1}^m(w_i^*)^{\frac{2(1+\kappa)}{\kappa}}\right)^{\frac{\kappa}{2(1+\kappa)}}\geq C(\kappa) m^{\frac{\kappa}{2(1+\kappa)}}\|\pi_l(t)-\pi_{l-1}(t)\|_2
\right)
\leq\exp(-2^{l}\beta),
\]
for any $\beta\geq8$ and some constant $C(\kappa)>0$.
\end{lemma}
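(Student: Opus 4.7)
The plan is to establish a deterministic envelope $w_i^\ast \leq r_i$ valid simultaneously for all $i\geq k+1$ on an event of probability at least $1-\exp(-2^l\beta)$, and then bound the $\ell^p$-sum of the envelope by an elementary integral calculation, where $p := 2(1+\kappa)/\kappa$. Write $\sigma := \|\pi_l(t)-\pi_{l-1}(t)\|_2$; by Lemma \ref{lemma:ball}, $\widetilde U_i$ is sub-Gaussian with norm independent of $d$, so $w_i := \langle \widetilde U_i, \pi_l(t)-\pi_{l-1}(t)\rangle$ satisfies $\mathbb P(|w_i|>s)\leq 2\exp(-c_0 s^2/\sigma^2)$ for an absolute constant $c_0>0$.

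The envelope I would choose is $r_i := C_0\sigma\sqrt{\log(em/i)}$. The standard combinatorial estimate
\[
\mathbb P(w_i^\ast > r_i)\leq \binom{m}{i}\mathbb P(|w_1|>r_i)^i \leq (2em/i)^i \exp\bigl(-c_0 C_0^2\, i\log(em/i)\bigr)
\]
shows that taking $c_0 C_0^2$ sufficiently large makes the per-index failure probability bounded by $\exp(-\alpha\, i\log(em/i))$ with $\alpha$ as large as desired. The critical auxiliary input, a natural companion to the already-used Lemma \ref{support-1}, is a matching lower bound $k\log(em/k)\geq 2^l\beta/2$. Combined with the monotonicity of $x\mapsto x\log(em/x)$ on $(0,m]$, this propagates to $i\log(em/i)\geq 2^l\beta/2$ for every $i\geq k+1$; a union bound over $i\in\{k+1,\ldots,m\}$ with $\alpha\geq 4$ then yields
\[
\mathbb P\bigl(\exists\, i\geq k+1:\ w_i^\ast>r_i\bigr)\leq m\exp(-2\cdot 2^l\beta)\leq \exp(-2^l\beta),
\]
where the last step uses $\log m\leq 2^l\beta$, which is exactly the defining condition of $I_2$.

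On the complementary event,
\[
\sum_{i=k+1}^m(w_i^\ast)^p \leq C_0^p \sigma^p \sum_{i=1}^m\bigl(\log(em/i)\bigr)^{p/2}.
\]
The remaining deterministic sum is bounded by an integral comparison: substituting $u=\log(em/x)$ converts $\int_1^m(\log(em/x))^{p/2}dx$ into $em\int_1^{\log em}u^{p/2}e^{-u}\,du\leq em\,\Gamma(p/2+1)$, and the $i=1$ term $(\log em)^{p/2}$ contributes at most a $p$-dependent multiple of $m$ (as one checks by maximizing $(\log em)^{p/2}/m$ in $m$). This gives $\sum_{i=1}^m(\log(em/i))^{p/2}\leq c(\kappa)\,m$, and taking $p$-th roots produces the advertised bound with $C(\kappa)=C_0\,c(\kappa)^{1/p}$ depending on $\kappa$ only through a gamma factor.

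The main obstacle is the lower bound $k\log(em/k)\gtrsim 2^l\beta$ uniformly in $I_2$. Near the lower endpoint $2^l\beta\approx\log em$ the integer $k$ can collapse to a small constant (possibly $k=1$), so one must verify by a short case split (distinguishing $a/b\geq 2$ and $a/b<2$ for $a=2^l\beta$, $b=\log(em/a)$) that the product still scales linearly in $2^l\beta$. Every other ingredient---the sub-Gaussian tail, the union bound, and the gamma-function integral---is standard bookkeeping.
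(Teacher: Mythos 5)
Your argument is correct, and it follows the same skeleton as the paper's proof: a per-order-statistic envelope obtained from the sub-Gaussian tail via the combinatorial bound $\mb P(w_i^\ast>r_i)\le\binom{m}{i}\mb P(|w_1|>r_i)^i$, a union bound over $i>k$ reduced to the worst case $i=k$ by monotonicity, and then a deterministic summation of the envelope. The genuine difference is the choice of envelope. You use the sharp sub-Gaussian order-statistic envelope $r_i\asymp\sigma\sqrt{\log(em/i)}$, which makes the failure probability $\exp(-\alpha\, i\log(em/i))$ and requires, as the key auxiliary fact, the lower bound $k\log(em/k)\ge 2^l\beta/2$; your case split ($a/b\ge 2$ versus $1\le a/b<2$, where in the second case $k=1$ and $\log(em)\ge a/2$ follows from $a<2\log(em/a)$) does verify this, and it is the natural multiplicative companion to Lemma \ref{support-1}. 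The paper instead uses the cruder polynomial envelope $r_i\asymp\sigma(m/i)^{\kappa/(4(1+\kappa))}$, calibrated so that $(r_i)^{2(1+\kappa)/\kappa}\asymp\sigma^{2(1+\kappa)/\kappa}(m/i)^{1/2}$ sums to $O(m)$ by a trivial integral, and it pays for this with the more delicate Lemma \ref{support-2} controlling $m^{\kappa/(2(1+\kappa))}k^{(2+\kappa)/(2(1+\kappa))}$ from below. Your route trades that lemma for the Gamma-function estimate $\sum_i(\log(em/i))^{p/2}\le c(\kappa)m$ with $p=2(1+\kappa)/\kappa$, which is standard; both routes produce a constant $C(\kappa)$ that degenerates as $\kappa\to 0$, and both deliver the stated probability $\exp(-2^l\beta)$ using only $\log m\le 2^l\beta$ from the definition of $I_2$. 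No gap.
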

\begin{proof}
To avoid possible confusion, we use $i$ to index the nonincreasing rearrangement and $j$ for the original sequence.
We start by noting that $\{w_j\}_{j=1}^m$ are i.i.d. sub-Gaussian random variables with $\|w_j\|_{\psi_2} \le \|\widetilde{U}_j\|_{\psi_2}\|\pi_l(t)-\pi_{l-1}(t)\|_2$. By an equivalent definition of sub-Gaussian random variables (Lemma 5.5. of \cite{introduction-to-random-matrix}), we have for any fixed $j\in\{1,2,\ldots,m\}$,
\bea \label{eq:boundw_j.change.u}
\mb P \l(
|w_j|-\expect{|w_j|}\geq Cu\|\widetilde{U}_j\|_{\psi_2}\|\pi_l(t)-\pi_{l-1}(t)\|_2
\right)
\leq e^{-u^2},
\ena
for any $u>0$ and an absolute constant $C>0$.

To establish the claim of the lemma, we bound each
$w_i^*$ separately for $i=1,2\ldots,m$ and then combine individual bounds. 
Instead of using a fixed value of $u$ in \eqref{eq:boundw_j.change.u}, our choice of $u$ will depend on the index $i$. 
Specifically, for each $w_i^*$, we choose 
$u=c_\kappa(m/i)^{\kappa/4(1+\kappa)}$ with
\bea \label{def:ckappa}
c_\kappa:=\max\left\{\frac{\sqrt{5}\left(2+\frac{4}{\kappa}\right)^{\frac{2+\kappa}{4(1+\kappa)}}}{e^{1/2(1+\kappa)}},\sqrt{\frac{4(1+\kappa)}{\kappa}}\right\}.
\ena
The reason for this choice will be clear as we proceed. 

First, for a fixed nonincreasing rearrangement index $i>k$, 
by \eqref{eq:boundw_j.change.u} and the fact that 
\[
\expect{|w_j|}\leq\expect{w_j^2}^{1/2}=\|\pi_l(t)-\pi_{l-1}(t)\|_2,~\forall j\in\{1,2,\cdots,m\},
\]
we have
\begin{align*}
\mb P \left(
|w_j|\geq \left(1+Cc_\kappa\|\widetilde{U}_j\|_{\psi_2}\right)\left(\frac{m}{i}\right)^{\frac{\kappa}{4(1+\kappa)}}\|\pi_l(t)-\pi_{l-1}(t)\|_2
\right)
\leq \exp\left(-c_\kappa^2\left(\frac{m}{i}\right)^{\frac{\kappa}{2(1+\kappa)}}\right),&\\
\forall j\in\{1,2,\cdots,m\}.&
\end{align*}
To simplify notation, let $C'=1+Cc_\kappa\|\widetilde{U}_j\|_{\psi_2}$ (note that it depends only on $\kappa$). 
It then follows that 
\begin{align*}
& \mb P \left(
w_{i}^*\geq C'\left(\frac{m}{i}\right)^{\frac{\kappa}{4(1+\kappa)}}\|\pi_l(t)-\pi_{l-1}(t)\|_2
\right) \\
=& \mb P\left(
\exists J\subseteq\{1,\cdots,m\},~|J|=i:~w_j\geq C'\left(\frac{m}{i}\right)^{\frac{\kappa}{4(1+\kappa)}}\|\pi_l(t)-\pi_{l-1}(t)\|_2,~\forall j\in J
\right) \\
\leq&\left(
\begin{array}{c}
 m   \\
 i   
\end{array}
\right)
\mb P\left(
|w_j|\geq C'\left(\frac{m}{i}\right)^{\frac{\kappa}{4(1+\kappa)}}\|\pi_l(t)-\pi_{l-1}(t)\|_2\right)^i
\\
\leq&\left(
\begin{array}{c}
 m   \\
 i   
\end{array}
\right)\exp\left(- c^2m^{\frac{\kappa}{2(1+\kappa)}}i^{\frac{2+\kappa}{2(1+\kappa)}}\right)\\
\leq&\left(\frac{em}{i}\right)^i
\exp\left(- c^2m^{\frac{\kappa}{2(1+\kappa)}}i^{\frac{2+\kappa}{2(1+\kappa)}}\right).
\end{align*}
By a union bound, we have
\begin{align*}
& \mb P\left(
\exists i>k: w_{i}^*\geq C'\left(\frac{m}{i}\right)^{\frac{\kappa}{4(1+\kappa)}}\|\pi_l(t)-\pi_{l-1}(t)\|_2
\right) 
\\
\leq&\sum_{i=k+1}^m\left(\frac{em}{i}\right)^i
\exp\left(- c^2m^{\frac{\kappa}{2(1+\kappa)}}i^{\frac{2+\kappa}{2(1+\kappa)}}\right)\\ =
&\sum_{i=k+1}^m\exp\left(i\log\left(\frac{em}{i}\right) 
- c^2m^{\frac{\kappa}{2(1+\kappa)}}i^{\frac{2+\kappa}{2(1+\kappa)}}\right)\\
\leq&m\cdot\exp\left(k\log\left(\frac{em}{k}\right)
- c^2m^{\frac{\kappa}{2(1+\kappa)}}k^{\frac{2+\kappa}{2(1+\kappa)}}\right)\\
\leq&\exp\left(4\cdot 2^l\beta- c^2m^{\frac{\kappa}{2(1+\kappa)}}k^{\frac{2+\kappa}{2(1+\kappa)}}\right),
\end{align*}
where the second to last inequality follows since by the definition \eqref{def:ckappa} of $c_\kappa$, $c_\kappa\geq\sqrt{4(1+\kappa)/\kappa}$, the function 
$v(i)=i\log\left(\frac{em}{i}\right) 
-c_\kappa^2m^{\frac{\kappa}{2(1+\kappa)}}\cdot i^{\frac{2+\kappa}{2(1+\kappa)}}$ 
is monotonically decreasing with respect to $i$ (recall that $i\leq m$), and thus is dominated by $v(k)$.
The final inequality follows from Lemma \ref{support-1} as well as the fact that 
$\log m \le \log(em)\leq 2^l\beta$. 
Furthermore, by Lemma \ref{support-2} in the Appendix \ref{app-A} and \eqref{def:ckappa} implying $c_\kappa\geq\sqrt{5}\left(2+\frac{4}{\kappa}\right)^{\frac{2+\kappa}{4(1+\kappa)}}/e^{1/2(1+\kappa)}$, we have
\[
c_\kappa^2m^{\frac{\kappa}{2(1+\kappa)}}k^{\frac{2+\kappa}{2(1+\kappa)}}
\geq 5\cdot2^l\beta.
\]
Overall, we have the following bound:
\begin{align*}
\mb P\left[\exists i>k: w_{i}^*\geq C'\left(\frac{m}{i}\right)^{\frac{\kappa}{4(1+\kappa)}}\|\pi_l(t)-\pi_{l-1}(t)\|_2\right]
\leq\exp\left(4\cdot2^l\beta-5\cdot2^l\beta\right)
\leq\exp(-2^l\beta).
\end{align*}
Thus, with probability at least $1-\exp(-2^{l}\beta)$,
\[
w_{i}^*\leq C'\left(\frac{m}{i}\right)^{\frac{\kappa}{4(1+\kappa)}}\|\pi_l(t)-\pi_{l-1}(t)\|_2,~\forall i>k,
\]
hence with the same probability
\begin{align*}
\left(\sum_{i=k+1}^m(w_i^*)^{\frac{2(1+\kappa)}{\kappa}}\right)^{\frac{\kappa}{2(1+\kappa)}}
\leq&C'\|\pi_l(t)-\pi_{l-1}(t)\|_2\left(\sum_{i=k+1}\left(\frac{m}{i}\right)^{1/2}\right)^{\frac{\kappa}{2(1+\kappa)}}\\
\leq&C'\|\pi_l(t)-\pi_{l-1}(t)\|_2m^{\frac{\kappa}{4(1+\kappa)}}\left(\int_1^m\frac{dx}{x^{1/2}}\right)^{\frac{\kappa}{2(1+\kappa)}}\\
\leq&2^{\frac{\kappa}{2(1+\kappa)}}C'\|\pi_l(t)-\pi_{l-1}(t)\|_2m^{\frac{\kappa}{2(1+\kappa)}},
\end{align*}
and the desired result follows.
\end{proof}

\begin{lemma}\label{cut-point-3}
The following inequalities hold for any $\beta\geq 8$:
\begin{align*}
&\mb P\left(
\left(\sum_{i=1}^m\widetilde{q}_i^2\right)^{1/2}\geq C'\sqrt{\beta m}
\right)
\leq 2e^{-\beta}, 
\\
& \mb P\left(
\left(\sum_{i=1}^m\widetilde{q}_i^{2(1+\kappa)}\right)^{\frac{1}{2(1+\kappa)}}\geq C''(\beta m)^{\frac{1}{2(1+\kappa)}}
\right)
\leq2e^{-\beta},
\end{align*}
for some positive constants $C'=C'(\phi,\kappa), \ C''=C''(\phi,\kappa)$.
\end{lemma}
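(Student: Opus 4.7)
The plan is to apply Bernstein's inequality (Lemma \ref{Bernstein}) to each of the two sums, using the truncation bound $|\widetilde{q}_i|\leq \tau = m^{1/2(1+\kappa)}$ to control the range, and the moment hypothesis $\mb E|q|^{2(1+\kappa)}\leq \phi$ combined with interpolation to control the variance. The point of the truncation step \eqref{transformation-2} is precisely that it makes all moments finite while retaining the $2(1+\kappa)$-moment of the original $q$; this is what turns a polynomial tail into an exponential one.

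For the first inequality, I would decompose
\[
\sum_{i=1}^m \widetilde{q}_i^2 = \sum_{i=1}^m \bigl(\widetilde{q}_i^2 - \mb E\widetilde{q}_i^2\bigr) + m\,\mb E\widetilde{q}_i^2.
\]
Jensen's inequality gives $\mb E\widetilde{q}_i^2 \leq \mb E q^2 \leq \phi^{1/(1+\kappa)}$, so the deterministic piece is $\leq \phi^{1/(1+\kappa)}\,\beta m$ for $\beta\geq 1$. Setting $X_i = \widetilde{q}_i^2 - \mb E\widetilde{q}_i^2$, the bound $|X_i|\leq 2\tau^2 =: D = 2m^{1/(1+\kappa)}$ together with the interpolation estimate
\[
\mb E\widetilde{q}_i^4 \leq \tau^{2(1-\kappa)}\mb E|q|^{2(1+\kappa)} \leq m^{(1-\kappa)/(1+\kappa)}\phi =: \sigma^2
\]
furnishes all the inputs to Lemma \ref{Bernstein}: since $|X_i|\leq D$, one has $\mb E|X_i|^p \leq D^{p-2}\mb E X_i^2 \leq (p!/2)\sigma^2 D^{p-2}$ for every $p\geq 2$. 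Taking $u=\beta$ in Bernstein produces
\[
\Big|\sum_{i=1}^m X_i\Big| \leq \sqrt{2m\sigma^2\beta} + D\beta \leq \bigl(\sqrt{2\phi\beta}+2\beta\bigr)\,m^{1/(1+\kappa)}
\]
with probability at least $1-2e^{-\beta}$. Because $1/(1+\kappa)<1$ and $\beta\geq 8$, the right-hand side is $\leq C(\phi,\kappa)\,\beta m$, and adding the mean term and taking a square root yields the claim.

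For the second inequality, I apply the same template to $Y_i := \widetilde{q}_i^{2(1+\kappa)}$, where everything is much cleaner because the truncation level was chosen precisely to match the exponent: $|Y_i|\leq \tau^{2(1+\kappa)} = m$, $\mb E Y_i \leq \phi$, and $\mb E Y_i^2 \leq \tau^{2(1+\kappa)}\mb E|q|^{2(1+\kappa)} = m\phi$. Bernstein applied to $\sum_i(Y_i - \mb E Y_i)$ with $\sigma^2 = m\phi$, $D=2m$, $u=\beta$ gives $|\sum_i(Y_i - \mb E Y_i)|\leq m\sqrt{2\phi\beta} + 2m\beta$ with probability at least $1-2e^{-\beta}$; combining with the mean contribution $m\mb E Y_i \leq m\phi$ yields $\sum_i Y_i \leq C(\phi)\,\beta m$, and raising to the $(1/2(1+\kappa))$-th power finishes the proof.

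I do not expect a serious obstacle here; the argument is essentially bookkeeping, and the only thing to be careful about is verifying that the various $m^{1/(1+\kappa)}$ and $m^{(1-\kappa)/(1+\kappa)}$ exponents arising in the first bound all collapse into the target $\beta m$ scaling, which they do thanks to $\kappa>0$ and $\beta\geq 8$. The conceptual step—combining boundedness from truncation with the interpolated $L^4$ variance estimate coming from the $2(1+\kappa)$-moment hypothesis—is the only substantive ingredient, and both parts of the lemma use the same pattern.
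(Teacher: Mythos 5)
Your proposal is correct and follows essentially the same route as the paper: center each sum, apply Bernstein's inequality with the variance proxy obtained by interpolating the $2(1+\kappa)$-moment against the truncation level $\tau$, and absorb the resulting $m^{1/(1+\kappa)}$-scale deviation terms into $\beta m$ using $\beta\geq 8$. The only cosmetic difference is that you verify the Bernstein moment condition via $\mb E|X_i|^p\leq D^{p-2}\mb E X_i^2$ from boundedness, while the paper estimates the $p$-th moments directly by the same interpolation; both yield identical parameters.
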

\begin{proof}
Recall that $\widetilde{q}_i=\textrm{sign}(q_i)(|q_i|\wedge\tau)$, 
$\tau=m^{1/2(1+\kappa)}$, and $\phi=\expect{q_i^{2(1+\kappa)}}$. 
Thus,
$\expect{\widetilde{q}_i^2}\leq\expect{q_i^2}\leq\phi^{1/1+\kappa}$,
and for any integer $p\geq2$, we have
\[
\expect{\widetilde{q}_i^{2p}} = \expect{\widetilde{q}_i^{2p-2(1+\kappa)}\widetilde{q}_i^{2(1+\kappa)}}
\leq m^{\frac{p-1-\kappa}{1+\kappa}}\expect{q_i^{2(1+\kappa)}}
\leq m^{\frac{p-1-\kappa}{1+\kappa}}\phi.
\]
Thus, for any $p\geq2$,
\[
\expect{|\widetilde{q}_i^2-\expect{\widetilde{q}_i^2}|^{p}}
\leq
\expect{\widetilde{q}_i^{2p}}+\left(\expect{q_i^2} \right)^p
\leq 
m^{\frac{p-1-\kappa}{1+\kappa}}\phi+\phi^{\frac{p}{1+\kappa}}
\leq 
(m+\phi)^{\frac{1-\kappa}{1+\kappa}}\phi(m+\phi)^{\frac{p-2}{1+\kappa}}.
\]
By Bernstein's inequality (Lemma \ref{Bernstein}), with probability at least $1-2e^{-\beta}$,
\begin{align*}
\left|\frac1m\sum_{i=1}^m\widetilde{q}_i^2-\expect{\widetilde{q}_i^2}\right|
&\leq\left(\frac{\sqrt{2\beta} (m+\phi)^{\frac{1-\kappa}{2(1+\kappa)}}\phi^{1/2}}{m^{1/2}}
+\frac{\beta (m+\phi)^{\frac{1}{1+\kappa}}}{m}\right)\\
&\leq\frac{\sqrt{2\beta} (1+\phi)^{\frac{1-\kappa}{2(1+\kappa)}}\phi^{1/2}+\beta (1+\phi)^{\frac{1}{1+\kappa}}}{m^{\frac{\kappa}{1+\kappa}}},
\end{align*}
which implies the first claim. 
To establish the second claim, note that for any $p \geq 2$,
\begin{align*}
\mb E\left|\widetilde{q}_i^{2(1+\kappa)}-\expect{\widetilde{q}_i^{2(1+\kappa)}}\right|^p
\leq &
C(p)\l( 
\mb E \l| \widetilde{q}_i^{2(1+\kappa)p} \r| + \l(\mb E \l| q_i^{2(1+\kappa)} \r|\r)^p \r) \\
\leq
&C(p) \l( \mb E\l| \widetilde{q}_i^{2(1+\kappa)(p-1)}q_i^{2(1+\kappa)} \r| + \phi^p\r) \\
\leq& C(p) (m^{p-1}\phi+\phi^p) \leq C(p)(m+\phi)^{p-2}(m+\phi)\phi,
\end{align*}
where we used the fact that $|\widetilde{q_i} |\leq m^{1/2(1+\kappa)}$ to obtain the third inequality. 
Bernstein's inequality implies that with probability at least $1-2e^{-\beta}$,
\[
\left|\frac1m\sum_{i=1}^m\widetilde{q}_i^{2(1+\kappa)}-\expect{\widetilde{q}_i^{2(1+\kappa)}}\right|
\leq\sqrt{2\beta}(1+\phi)\phi^{1/2}+\beta(1+\phi),
\]
which yields the second part of the claim.
\end{proof}

\begin{proof}[Proof of inequality \eqref{chaining-goal} for the index set $I_2$]
Combining Lemmas 
\ref{cut-point-1} and \ref{cut-point-2} with the inequality \eqref{inter-3}, and setting $u=2^{l/2}\sqrt{\beta}$, 
we get that with probability at least $1-4\exp(-2^l\beta)$, for all $l \in I_2$, 
\begin{align*}
|Z(\pi_l(t))- &Z(\pi_{l-1}(t))|\leq 
\\
&
C\|\pi_l(t)-\pi_{l-1}(t)\|_2\frac{2^{l/2}\sqrt{\beta}}{m}\left(\left(\sum_{i=1}^m\widetilde{q}_i^2\right)^{1/2}
+m^{\frac{\kappa}{2(1+\kappa)}}\left(\sum_{i=1}^m\widetilde{q}_i^{2(1+\kappa)}\right)^{\frac{1}{2(1+\kappa)}}
\right), 
\end{align*}
for some constant $C=C(\kappa,\phi)>0$; note that the factor $1/m$ appears due to equality \eqref{inter-average}. 
Next, we apply a chaining argument similar to the one used in Section \ref{first-chaining}, we obtain that with probability at least $1-ce^{-\beta/2}$,
\begin{align}\label{inter-4}
\sup_{t\in T}\left|\sum_{l\in I_2}\left(Z(\pi_l(t))-Z(\pi_{l-1}(t))\right)\right|
\leq C\frac{\gamma_2(T)\sqrt{\beta}}{m}\left(\left(\sum_{i=1}^m\widetilde{q}_i^2\right)^{1/2}
+m^{\frac{\kappa}{2(1+\kappa)}}\left(\sum_{i=1}^m\widetilde{q}_i^{2(1+\kappa)}\right)^{\frac{1}{2(1+\kappa)}}\right),
\end{align}
 for a positive constant $C=C(\kappa,\phi)$ and an absolute constant $c>0$. 
In order to handle the remaining terms involving $\widetilde{q}_i$ in \eqref{inter-4}, we apply 
 Lemma \ref{cut-point-3}, which gives
\begin{align*}
\sup_{t\in T}\left|\sum_{l\in I_2}\left(Z(\pi_l(t))-Z(\pi_{l-1}(t))\right)\right|
\leq C\frac{\gamma_2(T)\beta}{\sqrt{m}},
\end{align*}
with probability at least $1-ce^{-\beta/2}$, where $C=C(\kappa,\phi)$ and $c>0$ are positive constants and $\beta\geq8$. 
This completes the second part of the chaining argument.
\end{proof}

%
%

\subsubsection{The case $l\in I_3$.}
\begin{proof}[Proof of inequality \eqref{chaining-goal} for the index set $I_3$]
Direct application of Cauchy-Schwartz on \eqref{inter-average} yields, for all $t\in T$,
\[
|Z(\pi_l(t))-Z(\pi_{l-1}(t))|\leq\left(\frac1m\sum_{i=1}^mw_i^2\right)^{1/2}\left(\frac1m\sum_{i=1}^m\widetilde{q}_i^2\right)^{1/2},
\]
where $w_i=\dotp{\widetilde{U}_i}{\pi_l(t)-\pi_{l-1}(t)}$ are sub-Gaussian random variables. 
Thus, by Lemma \ref{prop-1}, $\omega_i^2$ are sub-exponential with norm bounded as in \eqref{inter-subexp}. 
Using Bernstein's inequality again, we deduce that  
\begin{align*}
\mb P \left(
\left|\frac1m\sum_{i=1}^m\left(w_i^2-\expect{w_i^2}\right)\right|\geq2\|\widetilde{U}_i\|_{\psi_2}^2\|\pi_l(t)-\pi_{l-1}(t)\|_2^2
\left(\sqrt{\frac{2u}{m}}+\frac{u}{m}\right)
\right)
\leq2\exp(-u).
\end{align*}
Let $u=2^{l}\beta$. 
Using the fact that $2^l\beta/m\geq 1$ as well as $\expect{w_i^2}=\|\pi_l(t)-\pi_{l-1}(t)\|_2^2$, we see that the term $u/m$ dominates the right hand side and
\begin{align*}
\mb P \left(
\left(\frac1m\sum_{i=1}^mw_i^2\right)^{1/2}\geq C\|\pi_l(t)-\pi_{l-1}(t)\|_2\frac{2^{l/2}\sqrt{\beta}}{\sqrt{m}}
\right)
\leq2\exp(-2^l\beta),
\end{align*}
for some absolute constant $C>0$. 
Thus, repeating a chaining argument of section \ref{first-chaining} (namely, the argument following \eqref{inter-2}), we obtain
\begin{align*}
\sup_{t\in T}\left|\sum_{l\in I_3}\left(Z(\pi_l(t))-Z(\pi_{l-1}(t))\right)\right|
\leq C\frac{\gamma_2(T)\sqrt{\beta}}{\sqrt{m}}\left(\frac1m\sum_{i=1}^m\widetilde{q}_i^2\right)^{1/2}
\end{align*}
with probability at least $1-ce^{-\beta/2}$ for some absolute constants $C,c>0$. 
Combining this inequality with the first claim of Lemma \ref{cut-point-3} gives
\begin{align*}
\sup_{t\in T}\left|\sum_{l\in I_3}\left(Z(\pi_l(t))-Z(\pi_{l-1}(t))\right)\right|
\leq C\frac{\gamma_2(T)\beta}{\sqrt{m}},
\end{align*}
with probability at least $1-ce^{-\beta/2}$ for absolute constants $C,c>0$ and any $\beta\geq 8$. 
This finishes the bound for the third (and final) segment of the ``chain''.
\end{proof}

\subsubsection{Finishing the proof of Lemma \ref{concentration-multiplier}}

\begin{proof}
So far, we have shown that
\begin{align}
\sup_{t\in T}\left|Z(t)-Z(t_0)\right|
=&\sup_{t\in T}\left|\sum_{l\geq1}\left(Z(\pi_l(t))-Z(\pi_{l-1}(t))\right)\right|\nonumber\\
\leq& \sum_{j\in\{1,2,3\}}\sup_{t\in T}\left|\sum_{l\in I_j}\left(Z(\pi_l(t))-Z(\pi_{l-1}(t))\right)\right|\nonumber\\
\leq&C\frac{\gamma_2(T)\beta}{\sqrt{m}}, \label{final-inter}
\end{align}
with probability at least $1-ce^{-\beta/2}$ for some positive constants $C=C(\kappa,\phi)$ and $c$, and any $\beta\geq8$. 
To finish the proof, it remains to bound 
$|Z(t_0)|=\left|\frac1m\sum_{i=1}^m\varepsilon_i\widetilde{q}_i
\dotp{\widetilde{U}_i}{t_0} \right|$. 
With $\Delta_d(T)$ defined in \eqref{def:Deltad}, and since $t_0$ is an arbitrary point in $T$, we trivially have $\|t_0\|_2\leq\Delta_d(T)$. 
Applying Bernstein's inequality in a way similar to Section \ref{first-chaining} yields
\begin{align*}
\mb P \left(
\left|\frac1m\sum_{i=1}^m\varepsilon_i\widetilde{q}_i\dotp{\widetilde{U}_i}{t_0} \right|\geq\left(\frac{C'\sqrt{2u}}{\sqrt{m}}+\frac{C'' u}{m^{1-\frac{1}{2(1+\kappa)}}}\right)\Delta_d(T)
\right)
\leq 2e^{-u},
\end{align*}
for some constants $C'=C'(\kappa,\phi), \ C''=C''(\kappa,\phi)>0$ and any $u>0$. 
Choosing $u=\beta$ gives 
\begin{align*}
\mb P \left(
\left|\frac1m\sum_{i=1}^m\varepsilon_i\widetilde{q}_i\dotp{\widetilde{U}_i}{t_0} \right|\geq 
\frac{C\Delta_d(T)\beta}{\sqrt{m}}
\right)
\leq 2e^{-\beta},
\end{align*}
for a constant $C=C(\kappa,\phi)>0$ and any $\beta\geq 0$.
Combining this bound with \eqref{final-inter} shows that with probability at least $1-ce^{-\beta/2}$, 
\[
\sup_{t\in T}\left|\frac1m\sum_{i=1}^m\varepsilon_i\langle\widetilde{U}_i,t\rangle \widetilde{q}_i\right|
\leq C\frac{(\gamma_2(T)+\Delta_d(T))\beta}{\sqrt{m}}
\leq C\frac{(L\omega(T)+\Delta_d(T))\beta}{\sqrt{m}},
\]
for $C=C(\kappa,\phi)$, an absolute constant $L>0$ and all $\beta\geq8$; note that the last inequality follows from Lemma \ref{mmt}. 
We have established \eqref{major-criterion}, thus completing the proof.
\end{proof}

\bibliographystyle{imsart-nameyear}
\bibliography{bibliography,bibliography2,bibliography3}

\appendix
\section{Technical results.}
\label{app-A}

\begin{lemma}
\label{basic-inequality}
For any nonnegative random variable $X$, if 
$\mb P \l( X>K\beta \r) \leq ce^{-\beta/2}$ for some constants $K,c>0$ and all $\beta\geq\beta_0\geq 0$, then,
\[
\expect{X}\leq K\left(\beta_0+2ce^{-\beta_0/2}\right).\]
\end{lemma}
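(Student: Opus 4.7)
The plan is to use the standard layer-cake / tail integral representation of the expectation for a nonnegative random variable, namely $\mb E X = \int_0^\infty \mb P(X > t)\,dt$, and then split the region of integration at the threshold $t = K\beta_0$, beyond which the tail hypothesis kicks in.

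First I would write
\[
\mb E X = \int_0^{K\beta_0} \mb P(X > t)\,dt + \int_{K\beta_0}^{\infty} \mb P(X > t)\,dt.
\]
For the first piece, the trivial bound $\mb P(X > t) \le 1$ immediately gives a contribution of at most $K\beta_0$. For the second piece, perform the change of variables $t = K\beta$, so $dt = K\,d\beta$ and the lower limit becomes $\beta_0$. Since $\beta \ge \beta_0$ throughout this range, the hypothesis $\mb P(X > K\beta) \le c e^{-\beta/2}$ applies, yielding
\[
\int_{K\beta_0}^{\infty} \mb P(X > t)\,dt = K \int_{\beta_0}^{\infty} \mb P(X > K\beta)\,d\beta \le Kc \int_{\beta_0}^{\infty} e^{-\beta/2}\,d\beta = 2Kc\, e^{-\beta_0/2}.
\]

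Adding the two pieces yields $\mb E X \le K\beta_0 + 2Kc\,e^{-\beta_0/2} = K(\beta_0 + 2c\,e^{-\beta_0/2})$, which is exactly the claimed bound. There is no real obstacle here; the only subtle point is remembering that one cannot use the exponential tail bound on $[0, K\beta_0)$ (since the hypothesis is only assumed for $\beta \ge \beta_0$), which is why the integral must be split at $K\beta_0$ and handled by the trivial bound $\mb P(X > t) \le 1$ on the lower segment.
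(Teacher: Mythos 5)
Your proof is correct and follows essentially the same route as the paper: the tail-integral (layer-cake) representation, a trivial bound $\mb P(X>t)\le 1$ on $[0,K\beta_0]$, and the change of variables $t=K\beta$ together with the hypothesis on the tail for the remaining integral. The only cosmetic difference is that you split the integral before substituting whereas the paper substitutes first and then splits; the computation is identical.
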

\begin{proof}
Using a well known identity for the expectation of non-negative random variables,
\begin{align*}
\expect{X}=&\int_0^{\infty}\mb P\l(X>u\r)du
=K\int_0^{\infty}\mb P\l( X>K\beta \r)d\beta\\
\leq&K \left( \beta_0+\int_{\beta_0}^{\infty}\mb P\l( X>K\beta \r) d\beta \right)
\leq K\left( \beta_0+\int_{\beta_0}^{\infty}ce^{-\beta/2}d\beta\right)\\
=&K\left(\beta_0+2ce^{-\beta_0/2}\right).
\end{align*}
\end{proof}


\begin{lemma} 
\label{prop-1}
If $X$ and $Y$ are sub-Gaussian random variables, then the product $XY$ is a subexponential random variable, and
\[
\|XY\|_{\psi_1}\leq \|X\|_{\psi_2}\|Y\|_{\psi_2}.
\]
\end{lemma}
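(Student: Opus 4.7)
The plan is to argue directly from the variational definition of the Orlicz-type norms. Since $\|XY\|_{\psi_1}=\sup_{p\geq 1}p^{-1}(\mathbb{E}|XY|^p)^{1/p}$, the task reduces to estimating $(\mathbb{E}|XY|^p)^{1/p}$ for each $p\geq 1$ in terms of $\|X\|_{\psi_2}$ and $\|Y\|_{\psi_2}$.

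First I would apply the Cauchy--Schwarz inequality to separate the two factors, namely $\mathbb{E}|XY|^p \leq (\mathbb{E}|X|^{2p})^{1/2}(\mathbb{E}|Y|^{2p})^{1/2}$, whence
\[
(\mathbb{E}|XY|^p)^{1/p} \leq (\mathbb{E}|X|^{2p})^{1/(2p)}(\mathbb{E}|Y|^{2p})^{1/(2p)}.
\]
Second, I would invoke the definition of the $\psi_2$-norm evaluated at index $2p$ to obtain $(\mathbb{E}|X|^{2p})^{1/(2p)}\leq \sqrt{2p}\,\|X\|_{\psi_2}$ and the analogous inequality for $Y$. Substituting both bounds and dividing through by $p$ yields
\[
p^{-1}(\mathbb{E}|XY|^p)^{1/p} \leq 2\,\|X\|_{\psi_2}\|Y\|_{\psi_2},
\]
uniformly in $p\geq 1$. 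Taking the supremum in $p$ delivers the stated inequality up to an absolute constant (a factor of $2$), and sub-exponentiality of $XY$ then follows since the right-hand side is finite by hypothesis.

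There is no substantial obstacle; the proof is essentially a one-line Cauchy--Schwarz followed by the definition. The only point worth flagging is the numerical constant: the routine argument above produces a factor of $2$, which is in fact consistent with the factor of $2$ that appears explicitly in \eqref{inter-subexp} when this lemma is applied. If one insists on the constant $1$ in the statement, one can attempt the Young-type split $|XY|\leq \tfrac{1}{2}(\alpha X^2 + \alpha^{-1} Y^2)$ with $\alpha=\|Y\|_{\psi_2}/\|X\|_{\psi_2}$ combined with the triangle inequality for $\|\cdot\|_{\psi_1}$ (Remark \ref{norm-justify}), but the identity $\|X^2\|_{\psi_1}\leq 2\|X\|_{\psi_2}^2$ under the paper's definition reintroduces the same factor, confirming that the constant cannot be improved without altering the normalization of $\|\cdot\|_{\psi_q}$.
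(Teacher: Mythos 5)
Your argument is correct, and it is genuinely different from what the paper does: the paper's proof of Lemma \ref{prop-1} consists solely of a citation, whereas you give a self-contained two-line derivation (Cauchy--Schwarz to split $\mathbb{E}|XY|^p$ into $(\mathbb{E}|X|^{2p})^{1/2}(\mathbb{E}|Y|^{2p})^{1/2}$, then the definition of $\|\cdot\|_{\psi_2}$ at index $2p$, which is legitimate since $2p\geq 1$). The alternative you sketch at the end --- the Young-type split $|XY|\leq\frac{1}{2}(\alpha X^2+\alpha^{-1}Y^2)$ with the optimal $\alpha$, the triangle inequality for $\|\cdot\|_{\psi_1}$, and the bound $\|X^2\|_{\psi_1}\leq 2\|X\|_{\psi_2}^2$ --- is the standard textbook route and lands on the same constant; your Cauchy--Schwarz version is, if anything, shorter.

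Your remark about the constant is not a cosmetic quibble but a genuine correction to the lemma as stated. Under the paper's normalization $\|\xi\|_{\psi_q}=\sup_{p\geq1}p^{-1/q}(\mathbb{E}|\xi|^p)^{1/p}$, the inequality with constant $1$ is false: for a symmetric $X$ with $\mathbb{P}(|X|=1)=e^{-2}$ and $X=0$ otherwise, one computes $\|X\|_{\psi_2}^2=1/(4e)$ (the supremum is attained at $p=4$) while $\|X^2\|_{\psi_1}=1/(2e)$ (attained at $p=2$), so $\|X\cdot X\|_{\psi_1}=2\|X\|_{\psi_2}^2$ and the factor $2$ is sharp. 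The statement of the lemma should therefore read $\|XY\|_{\psi_1}\leq 2\|X\|_{\psi_2}\|Y\|_{\psi_2}$; this is exactly the form the paper actually uses in \eqref{inter-subexp}, so nothing downstream is affected.
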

\begin{proof}
See \citep{wellner1}. 

\end{proof}

\begin{lemma}\label{support-1}
Let $k=\lfloor2^l\beta/\log(em/2^l\beta)\rfloor$ and $l\in I_2$, then,
$\left(\frac{em}{k}\right)^k\leq\exp(3\cdot2^l\beta).$
\end{lemma}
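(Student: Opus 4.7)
The plan is to reduce the claim to showing $k\log(em/k)\le 3\cdot 2^l\beta$, which by exponentiation is equivalent to the stated inequality. Set the shorthand $a = 2^l\beta$ and $x = a/\log(em/a)$, so that $k = \lfloor x\rfloor$. Since $l\in I_2$ gives $\log(em)\le a < m$, we have $em/a > e$, hence $\log(em/a) > 1$; moreover $x = a/\log(em/a)\ge 1$ because $a \ge \log(em) \ge \log(em/a)$. In particular $k\ge 1$ so the quantity we are bounding is well defined and positive.

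The core of the argument is to exploit the monotonicity of $g(t) := t\log(em/t)$. A direct computation gives $g'(t) = \log(em/t) - 1 = \log(m/t)$, which is nonnegative on $(0,m]$, so $g$ is increasing there. Since $k\le x$ and $x < m$ (as $a < m$ and $\log(em/a) > 1$), we obtain $g(k)\le g(x)$. Now I would evaluate $g(x)$ explicitly: from $em/x = (em/a)\log(em/a)$ one gets $\log(em/x) = \log(em/a) + \log\log(em/a)$, and hence
\[
g(x) \;=\; x\log(em/x) \;=\; a \;+\; a\cdot\frac{\log\log(em/a)}{\log(em/a)}.
\]

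To control the second term, I would apply the elementary inequality $\log u/u \le 1/e$, valid for all $u > 0$ with maximum at $u = e$. Taking $u = \log(em/a) > 1$ yields $\log\log(em/a)/\log(em/a) \le 1/e$, so
\[
k\log(em/k) \;\le\; g(x) \;\le\; a\l(1+\tfrac{1}{e}\r) \;<\; 3a \;=\; 3\cdot 2^l\beta,
\]
and exponentiating gives $(em/k)^k \le \exp(3\cdot 2^l\beta)$. There is no real technical obstacle here: the only things to watch are that $x\ge 1$ (so $\lfloor x\rfloor$ is not pathological) and that $x < m$ (so one stays in the monotone regime of $g$), both of which follow immediately from the defining inequalities of $I_2$.
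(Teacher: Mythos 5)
Your proof is correct, and it takes a genuinely different route from the paper's. The paper controls $\left(\frac{em}{k}\right)^k$ by bounding the floor from both sides, using $k\le x$ in the exponent and $k\ge x-1$ in the denominator (with $x=2^l\beta/\log(em/2^l\beta)$); this forces a case split between $k\ge 2$ and $k=1$ and a chain of estimates on $\log\left(\frac{2em}{2^l\beta}\log\frac{em}{2^l\beta}\right)$ to land on the constant $3$. You instead observe that $g(t)=t\log(em/t)$ is increasing on $(0,m]$ (since $g'(t)=\log(m/t)\ge 0$), so $k\le x<m$ immediately gives $g(k)\le g(x)$, and then an exact evaluation $g(x)=a+a\,\frac{\log\log(em/a)}{\log(em/a)}$ combined with $\log u/u\le 1/e$ yields $g(k)\le(1+\tfrac1e)a$. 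Your argument is shorter, avoids the case analysis entirely, and delivers the sharper constant $1+\tfrac1e$ in place of $3$; all the side conditions you need ($x\ge 1$, $x<m$, $\log(em/a)>1$) do follow from $\log(em)\le 2^l\beta<m$ exactly as you state. The only thing worth making explicit is that $a\ge 1$ (which holds since $a\ge\log(em)\ge 1$) is what justifies $\log(em/a)\le\log(em)\le a$ in the verification that $x\ge 1$, but this is immediate.
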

\begin{proof}
If $k\geq2$, then, $2^l\beta/\log(em/2^l\beta)\geq2$, which implies $2^l\beta\geq2\log(em/2^l\beta)$. Thus,
\begin{align*}
\left(\frac{em}{k}\right)^k
\leq&2\exp\left(\frac{2^l\beta}{\log\frac{em}{2^l\beta}}\log\left(\frac{em}{\frac{2^l\beta}{\log\frac{em}{2^l\beta}}-1}\right)\right)\\
\leq&2\exp\left(\frac{2^l\beta}{\log\frac{em}{2^l\beta}}\log\left(\frac{em}{2^l\beta-\log\frac{em}{2^l\beta}}\log\frac{em}{2^l\beta}\right)\right)\\
\leq&2\exp\left(\frac{2^l\beta}{\log\frac{em}{2^l\beta}}\log\left(\frac{2em}{2^l\beta}\log\frac{em}{2^l\beta}\right)\right)\leq\exp(3\cdot2^l\beta),
\end{align*}
where the second from last inequality follows from $\left(\frac{em}{k}\right)^k\leq\exp(3\cdot2^l\beta)$, and the last inequality follows from $m\geq2^l\beta$, thus, $\log(2em/2^l\beta)/\log(em/2^l\beta)\leq2$.

On the other hand, if $k=1$, then, since $\log em\leq2^l\beta$,
$
\left(\frac{em}{k}\right)^k=em=\exp(\log em)\leq\exp(2^l\beta),
$
finishing the proof. 
\end{proof}

\begin{lemma}
\label{support-2}
With $m \ge 1, \beta \ge 1, \kappa \in (1,0)$ and 
$l\in I_2=\{l\geq1:\log em\leq 2^{l}\beta< m\}$, the integer
$k=\lfloor2^l\beta/\log(em/2^l\beta)\rfloor$ satisfies $k \ge 1$, and
\[
\frac{\left(2+\frac{4}{\kappa}\right)^{\frac{2+\kappa}{2(1+\kappa)}}}{e^{1/(1+\kappa)}}m^{\frac{\kappa}{2(1+\kappa)}}k^{\frac{2+\kappa}{2(1+\kappa)}}\geq 2^l\beta.
\] \end{lemma}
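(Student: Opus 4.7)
The plan is to set $a := 2^l\beta$ and $b := \log(em/a)$, so that $k = \lfloor a/b \rfloor$ and the hypothesis $l \in I_2$ translates into $\log(em) \leq a < m$. Since $m \geq 1$ gives $\log(em) \geq 1$, we have $a \geq 1$ and hence $\log a \geq 0$; combined with $a \geq \log(em) = b + \log a$, this yields $a \geq b$, so $k = \lfloor a/b\rfloor \geq 1$, settling the first claim.

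For the main inequality, I would raise both sides to the $2(1+\kappa)$-th power and reduce the goal to
\[
m^\kappa k^{2+\kappa} \geq \frac{e^2}{(2+4/\kappa)^{2+\kappa}}\, a^{2(1+\kappa)}.
\]
Next, rewriting $m = a e^{b-1}$ (which follows from the definition of $b$) gives $m^\kappa = a^\kappa e^{\kappa(b-1)}$, turning the target into
\[
k^{2+\kappa} \geq \frac{e^{2+\kappa}}{(2+4/\kappa)^{2+\kappa}}\, a^{2+\kappa}\, e^{-\kappa b}.
\]

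The crucial observation is the elementary lower bound $k \geq a/(2b)$, which holds in all cases: when $a/b \geq 2$ one has $k \geq a/b - 1 \geq a/(2b)$, and when $1 \leq a/b < 2$ one has $k \geq 1 \geq a/(2b)$ directly. Substituting this and using the identity $2/(2+4/\kappa) = \kappa/(2+\kappa)$, the inequality collapses to
\[
e^{\kappa b} \geq e^{2+\kappa}\left(\frac{\kappa b}{2+\kappa}\right)^{2+\kappa}.
\]
Introducing $y := \kappa b/(2+\kappa) > 0$ recasts the left side as $(e^y)^{2+\kappa}$, so the statement becomes $e^y \geq ey$, which follows from the standard $e^x \geq 1+x$ applied with $x = y-1$.

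The hard part is not conceptual but organizational: tracking the fractional exponents $(2+\kappa)$, $2(1+\kappa)$, $\kappa/(2(1+\kappa))$ and recognizing that the specific constants in the lemma's statement are engineered to make the loose estimate $k \geq a/(2b)$ (rather than some sharper replacement for $\lfloor a/b \rfloor$) just sufficient. In particular, the factor $2$ inside $2+4/\kappa = 2(2+\kappa)/\kappa$ is precisely what absorbs the $2b$ coming from that bound, and the $e^{1/(1+\kappa)}$ in the denominator of the lemma's constant absorbs the $e^\kappa$ generated by the substitution $m = ae^{b-1}$. Once these matchings are anticipated, the entire argument reduces to the classical inequality $e^y \geq ey$.
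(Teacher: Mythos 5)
Your proof is correct and follows essentially the same route as the paper's: both hinge on the floor bound $k \ge 2^l\beta/\bigl(2\log(em/2^l\beta)\bigr)$ (i.e.\ $\lfloor x\rfloor \ge x/2$ for $x\ge 1$) and then reduce the claim to a single scalar inequality in $b=\log(em/2^l\beta)$. The only difference is cosmetic: you verify that scalar inequality via the substitution $y=\kappa b/(2+\kappa)$ and $e^{y}\ge ey$, whereas the paper maximizes $g(x)=(\log ex)^{(2+\kappa)/\kappa}/x$ by locating its critical point at $x=e^{2/\kappa}$ --- after a change of variables these are the same inequality.
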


\begin{proof}
Since $2^l \beta\geq\log(em) \ge 1$, 
it follows that $k\geq1$, and thus $k\geq2^l\beta/2\log(em/2^l\beta)$. It is then enough to show that
\[
\frac{\left(1 + \frac{2}{\kappa}\right)^{\frac{2+\kappa}{2(1+\kappa)}}}{e^{1/(1+\kappa)}}
\left(\frac{m}{2^l\beta}\right)^{\frac{\kappa}{2(1+\kappa)}}
\geq
\left(\log\frac{em}{2^l\beta}\right)^{\frac{2+\kappa}{2(1+\kappa)}}.
\]
Raising both sides to the power of $2(1+\kappa)/\kappa$, equivalently
\[
\left.\left(
1+\frac2\kappa\right)^{\frac{2+\kappa}{\kappa}}\right/e^{\frac2\kappa}
\geq
\left.\left(
\log\frac{em}{2^l\beta}\right)^{\frac{2+\kappa}{\kappa}}\right/\frac{m}{2^l\beta}.
\] 
Consider the function $g(x)=\left(\log ex\right)^{\frac{2+\kappa}{\kappa}}/x$. Note that as $m>2^l\beta$, to prove the inequality above it suffices to show that the $\sup_{x \ge 1}g(x)$ is upper bounded by the left hand side. Taking the derivative of $g(x)$ yields
\[g'(x)=\frac{\frac{2+\kappa}{\kappa}(1+\log x)^{2/\kappa}-(1+\log x)^{(2+\kappa)/\kappa}}{x^2}.\]
Since $x\geq1$, the only critical point at which the global maximum occurs is given by $x=e^{2/\kappa}$. 
As $g\left(e^{2/\kappa}\right)$ is exactly equal to the left hand side the proof is complete.
\end{proof}

\section{Decomposable norms and Restricted Compatibility.}
\label{app-B}

In this section, we recall some facts about decomposable norms that have been introduced in \cite{general-m-estimator}. 
\begin{definition}
\label{def:decomposable}
Suppose that $\mathcal{L}\subseteq \m L_1$ are two subspace of $\mathbb{R}^d$, and let $\mathcal{L}_1^{\perp}$ be the orthogonal complement of $\mathcal{L}_1$. 
Norm $\|\cdot\|_{\mathcal{K}}$ is said to be decomposable with respect to $(\mathcal{L},~\mathcal{L}_1^\perp)$ if for any $\theta\in\mathbb{R}^d$,
\begin{align*}
\|\theta_1+\theta_2\|_{\mathcal{K}}=\| \Pi_{\mathcal{L}}\theta_1\|_{\mathcal{K}}+\| \Pi_{\mathcal{L}_1^{\perp}}\theta\|_{\mathcal{K}},
\end{align*}
where $\Pi_{\mathcal{L}}$ and $\Pi_{\mathcal{L}_1^{\perp}}$ stand for the orthogonal projectors onto $\mathcal{L}$ and $\mathcal{L}_1^\perp$ respectively.
\end{definition}

It is well known that many frequently used norms, including the $\ell_1$ norm of a vector and the nuclear norm of a matrix, are decomposable with respect to the appropriately chosen pair of subspaces. 
For instance, the $\ell_1$ norm is decomposable with respect to the pair of subspaces $(\m L(J), \m L(J)^\perp)$, where 
\begin{align}
\label{eq:L(J)}
&
\m L(J):=\l\{ v\in \mb R^d: \ v_j=0 \text{ for all } j\notin J\r\}
\end{align} 
consists of sparse vectors with non-zero coordinates indexed by a set $J\subseteq \l\{ 1,\ldots, d\r\}$. 

Let $W_1\subseteq \mb R^{d_1}, \ W_2\subseteq \mb R^{d_2}$ be two linear subspaces. 
Then we define the subspace $\m L(W_1,W_2)\subseteq \mb R^{d_1\times d_2}$ via 
\begin{align*}
&
\m L(W_1,W_2):= \l\{ M\in \mb R^{d_1\times d_2}: \ \mathrm{row}(M)\subseteq W_1, \ \mathrm{col}(M)\subseteq W_2  \r\},
\end{align*}
where $\mathrm{row}(M)$ and $\mathrm{col}(M)$ are the linear subspaces spanned by the rows and columns of $M$ respectively, and 
\begin{align}
\label{eq:L_12}
&
\m L_1^\perp(W_1,W_2):= \l\{ M\in \mb R^{d_1\times d_2}: \ \mathrm{row}(M)\subseteq W_1^\perp, \ \mathrm{col}(M)\subseteq W_2^\perp  \r\}.
\end{align}
Then the nuclear norm $\|\cdot\|_\ast$ is decomposable with respect to $\l( \m L(W_1,W_2),\m L_1^\perp(W_1,W_2 )\r)$ (see \citep{general-m-estimator} for details). 

Assume that the norm $\|\cdot\|_\m K$ is decomposable with respect to $(\m L,\m L_1^\perp)$, and let $\theta\in \m L$. 
It is clear that for any $\mathbf{v}\in\mb{S}_{c_0}(\theta)$
\begin{equation}
\label{app-1}
\|\theta+\mathbf{v}\|_{\mathcal{K}}=
\| \Pi_{\mathcal{L}}\theta+\Pi_{\mathcal{L}_1}\mathbf{v}+\Pi_{\mathcal{L}_1^{\perp}}\mathbf{v}\|_{\mathcal{K}}
\leq \| \Pi_{\mathcal{L}}\theta\|_{\mathcal{K}}+\frac{1}{c_0}\| \Pi_{\mathcal{L}_1}\mathbf{v} \|_{\m K}+ \| \Pi_{\mathcal{L}_1^{\perp}}\mathbf{v}\|_{\mathcal{K}}.
\end{equation}
Since $\theta\in\mathcal{L}$, decomposability and the triangle inequality imply that 
\begin{align*}
\| \Pi_{\mathcal{L}}\theta+\Pi_{\mathcal{L}_1}\mathbf{v}+\Pi_{\mathcal{L}_1^{\perp}}\mathbf{v}\|_{\mathcal{K}}
&
=\|\Pi_{\mathcal{L}}\theta+\Pi_{\mathcal{L}_1}\mathbf{v}\|_{\mathcal{K}} 
+\|\Pi_{\mathcal{L}_1^{\perp}}\mathbf{v}\|_{\mathcal{K}} \\
&
\geq\|\Pi_{\mathcal{L}}\theta\|_{\mathcal{K}}-\|\Pi_{\mathcal{L}_1}\mathbf{v}\|_{\mathcal{K}}
+\|\Pi_{\mathcal{L}_1^{\perp}}\mathbf{v}\|_{\mathcal{K}}.
\end{align*}
Substituting this bound into \eqref{app-1} gives
\begin{align*}
-\| \Pi_{\mathcal{L}}\mathbf{v}\|_{\mathcal{K}}+\| \Pi_{\mathcal{L}_1^{\perp}}\mathbf{v}\|_{\mathcal{K}}
\leq\frac{1}{c_0}\| \Pi_{\mathcal{L}_1}\mathbf{v}\|_{\mathcal{K}}+\frac{1}{c_0}\| \Pi_{\mathcal{L}_1^{\perp}}\mathbf{v}\|_{\mathcal{K}},
\end{align*}
which implies that for any $\mathbf{v}\in\mb{S}_{c_0}(\theta)$
\[
\| \Pi_{\mathcal{L}_1^{\perp}}\mathbf{v}\|_{\mathcal{K}} \leq 
\frac{c_0+1}{c_0-1}\| \Pi_{\mathcal{L}_1}\mathbf{v}\|_{\mathcal{K}}.
\]
It is easy to see that the set of all $\mf{v}$ satisfying the inequality above is a convex cone, which we will denote by $C_{c_0}=C_{c_0}(\m K)$. 
Since ${\mathbb{S}}_{c_0}(\theta)\subseteq C_{c_0}$,  
\[
\Psi\left(\mb{S}_{c_0}(\theta)\right)\leq\Psi\left(C_{c_0}\right)
\]
by definition of the restricted compatibility constant. 
This inequality is useful due to the fact that it is often easier to estimate $\Psi\left(C_{c_0}\right)$. 

Finally, we make a remark that is useful when dealing with non-isotropic measurements. 
Let $\mf\Sigma\succ 0$ be a $d\times d$ matrix, and consider the norm corresponding to the convex set $\mf \Sigma^{1/2}\m K$, so that 
$\|\mf v\|_{\mf \Sigma^{1/2}\m K}=\|\mf\Sigma^{-1/2}\mf v\|_\m K$. 
It is easy to see that $C_{c_0}(\mf \Sigma^{1/2}\m K) = \mf\Sigma^{1/2}C_{c_0}(\m K)$, hence 
\begin{align*}
\Psi \l( C_{c_0}(\mf \Sigma^{1/2}\m K); \mf\Sigma^{1/2} \m K \r) & = 
\sup_{\mf v\in \mf\Sigma^{1/2}\m K \setminus \{0\}}\frac{\| \mf v\|_{\mf\Sigma^{1/2}\m K}}{\|\mf v\|_2} = 
\sup_{\mf u\in \m K\setminus \{0\}}\frac{\|\mf u\|_\m K}{\| \mf \Sigma^{1/2}\mf u\|_2} \\
&
\leq \| \mf\Sigma^{-1/2}\| \, \Psi \l( C_{c_0}(\m K); \m K \r). 
\end{align*}
\textbf{Example 1: $\ell_1$ norm. }
Let $\m L(J)$ be as in \eqref{eq:L(J)} with $|J|=s\leq d$. 
If $v\in \mb R^d$ belongs to the corresponding cone $C(c_0)$, then clearly $\|v\|_1\leq \frac{2c_0}{c_0-1}\|v_J\|_1$, where $v_J:=\Pi_{\m L(J)} v$. 
Hence 
\[
\|v\|_1 \leq \frac{2c_0}{c_0-1}\|v_J\|_1\leq \frac{2c_0}{c_0-1}\sqrt{|J|}\|v\|_2,
\] 
and $\Psi(C_{c_0})\leq \frac{2c_0}{c_0-1}\sqrt{s}.$
\\
\textbf{Example 2: nuclear norm.}
Let $\m L_1^\perp(W_1,W_2)$ be as in \eqref{eq:L_12}. 
Note that for any $v\in \mb R^{d_1\times d_2}$, 
$\Pi_{\m L_1^\perp(W_1,W_2)}v = \Pi_{W_2^\perp} v \Pi_{W_1^\perp}$, where $\Pi_{W_1^\perp}$ and $\Pi_{W_2^\perp}$ are the orthogonal projectors onto subspaces $W_1\subseteq \mb R^{d_1}$ and $W_2\subseteq \mb R^{d_2}$ respectively. 
Then for any $v\in C_{c_0}$, we have that 
\begin{align}
\label{eq:rank}
\|v\|_\ast \leq \| \Pi_{\m L_1^\perp(W_1,W_2)}v \|_\ast + \| \Pi_{\m L_1(W_1,W_2)}v \|_\ast \leq \frac{2c_0}{c_0-1}\| \Pi_{\m L_1(W_1,W_2)}v \|_\ast.
\end{align}
Note that 
\begin{align*}
& 
\Pi_{\m L_1(W_1,W_2)}v = v - \Pi_{W_2^\perp} v \Pi_{W_1^\perp} =  \Pi_{W_2^\perp} v \Pi_{W_1} + \Pi_{W_2}v, 
\end{align*}
hence $\mathrm{rank}\l(  \Pi_{\m L_1(W_1,W_2)}v \r)\leq 2\max\l(\dim(W_1),\dim(W_2)\r)$, which yields together with \eqref{eq:rank} that
\[
\|v\|_\ast \leq \frac{2c_0}{c_0-1}\| \Pi_{\m L_1(W_1,W_2)}v \|_\ast 
\leq \frac{2c_0}{c_0-1}\sqrt{2\max\l(\dim(W_1),\dim(W_2)\r)}\|v\|_2, 
\]
and 
$\Psi(C_{c_0})\leq \frac{2\sqrt{2}c_0}{c_0-1}\sqrt{\max\l(\dim(W_1),\dim(W_2)\r)}.$

\end{document}